\newcolumntype{L}{>{$}l<{$}}
\newtheorem{theorem}{Theorem}[section]
\newtheorem{lemma}[theorem]{Lemma}
\newtheorem{proposition}[theorem]{Proposition}
\theoremstyle{definition}
\newtheorem{remark} [theorem] {Remark}
\newtheorem{question} [theorem] {Question}
\newtheorem{thm}[theorem]{Theorem}
\theoremstyle{definition}
\newcommand{\C}{{\mathbb{C}}}
\newcommand{\F}{{\mathbb{F}}}
\newcommand{\Z}{{\mathbb{Z}}}
\newcommand{\lk}{\ell\text{k}}
\newcommand{\rank}{\text{rank}}
\DeclareMathOperator{\HFL}{HFL}
\DeclareMathOperator{\CFL}{CFL}
\DeclareMathOperator{\HFK}{HFK}
\DeclareMathOperator{\HF}{HF}
\DeclareMathOperator{\Kh}{Kh}
\DeclareMathOperator{\AKh}{AKh}
\DeclareMathOperator{\SFH}{SFH}
\DeclareMathOperator{\Mod}{Mod}
\begin{document}
	\title{Knot Floer homology, link Floer homology and link detection}
	\author{Fraser Binns and Gage Martin}

	\maketitle

\begin{abstract}
We give new link detection results for knot and link Floer homology inspired by recent work on Khovanov homology. We show that knot Floer homology detects $T(2,4)$, $T(2,6)$, $T(3,3)$, $L7n1$, and the link $T(2,2n)$ with the orientation of one component reversed. We show link Floer homology detects $T(2,2n)$ and $T(n,n)$, for all $n$. Additionally we identify infinitely many pairs of links such that both links in the pair are each detected by link Floer homology but have the same Khovanov homology and knot Floer homology. Finally, we use some of our knot Floer detection results to give topological applications of annular Khovanov homology. 
\end{abstract}

\begin{section}{Introduction}
Knot and link Floer homology are invariants of links in $S^3$~\cite{OS1,R,OS}. There are a number of formal similarities between these Floer theoretic invariants and the combinatorial theory Khovanov homology. Recently, Khovanov homology has been shown to detect a number of simple links~\cite{xie_classification_2019,G,xie_links_2020,li_two_2020,baldwin_khovanov_2020}. Some of these detection results have used knot and link Floer homology without going so far as to determine whether knot or link Floer homology detects the relevant link. Inspired by this work, we give such detection results for knot and link Floer homology. We remind the reader that the knot Floer homology of a link $L$ is computed using an associated knot, called the knotification of $L$, in a connected sum of $S^1 \times S^2$'s while the link Floer homology of $L$ is computed directly from the link $L$ in $S^3$.

Previously it was known that knot Floer homology detects the unknot~\cite{ozsvath_holomorphic_2004-1}, the trefoil~\cite{ghiggini2008knot}, the figure eight knot~\cite{ghiggini2008knot}, the Hopf link~\cite{ni2007knot,ozsvath_holomorphic_2004-1}, and the unlink~\cite{ni_homological_2014, hedden_geography_2018}. Link Floer homology was known to detect the trivial $n$-braid together with its braid axis~\cite{BG} and determine if a link is split~\cite{wang_link_2020}. It was also known that a stronger version of link Floer homology $\CFL^\infty$ detects the Borromean rings and the Whitehead link~\cite{gorsky_triple_2020}.

In this paper, we prove the following knot Floer homology detection results:

\newtheorem*{T24}{Theorem~\ref{knotFloerT24}}
\begin{T24}
    
    If $\widehat{\HFK}(L) \cong \widehat{\HFK}(T(2,4))$, then $L$ is isotopic to $T(2,4)$.

\end{T24}

Throughout, we take the links $T(2,2n)$ to be oriented as the closure of the $2$-braids $\sigma_1^{2n}$.

\newtheorem*{T26}{Theorem~\ref{HFKT26}}
\begin{T26}
   If $\widehat{\HFK}(L) \cong \widehat{\HFK}(T(2,6))$, then $L$ is isotopic to $T(2,6)$.
\end{T26}

Let $J_n$ be the link obtained from $T(2,2n)$ by reversing the orientation on one of the components. We have the following result:

\newtheorem*{JN}{Theorem~\ref{HFK:KN}}
\begin{JN}
    If $\widehat{\HFK}(L) \cong \widehat{\HFK}(J_n)$ for some $n$, then $L$ is isotopic to $J_n$.
\end{JN}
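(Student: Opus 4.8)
The plan is to pin down the topology of $J_n$, show that the isomorphism forces $L$ to share that topology, and then classify. Topologically, $J_n$ is the oriented boundary of an unknotted, $n$-times-twisted annulus $A$: its two components are the antiparallel curves cobounding $A$. Hence $J_n$ has two components, Seifert genus $0$, and linking number $\pm n$; and since its exterior coincides with that of $T(2,2n)$, which fibers over $S^1$ with fiber $A$ and monodromy the $n$-th power of the Dehn twist about the core of $A$, the link $J_n$ is fibered with annular fiber. One then computes $\widehat{\HFK}(J_n)=\widehat{\HFK}(S^1\times S^2,\kappa(J_n))$: it is supported in Alexander gradings $-1,0,1$, is one-dimensional in the extremal gradings $\pm1$ (reflecting fiberedness of the knotification), and has total rank $R_n$ and Maslov gradings that one reads off either from the L-space link structure $J_n$ inherits from $T(2,2n)$ or from a direct sutured computation; here $R_n$ is an explicit function of $n$ that is strictly increasing. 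Since $J_1$ is the negative Hopf link, the case $n=1$ follows from the known Hopf link detection, so assume $n\ge 2$.

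Now suppose $\widehat{\HFK}(L)\cong\widehat{\HFK}(J_n)$. I would first extract: (i) $L$ has two components, by the argument used to detect the number of components in the proof of Theorem~\ref{knotFloerT24}, using $R_n$ together with the spectral sequence converging to $\widehat{\HF}$ of the ambient connected sum of copies of $S^1\times S^2$; (ii) $L$ is non-split and $g(L)=0$, so $L$ bounds an embedded annulus, since the extremal Alexander grading of $\widehat{\HFK}(\kappa(L))$ equals $1$ and is one-dimensional, which with (i) pins the minimal Seifert surface of $L$ to be an annulus, via the detection of the Thurston norm by the knot Floer homology of the knotification; and (iii) $L$ is fibered, from the one-dimensionality of the extremal group, knot Floer detection of fibredness for knots in connected sums of $S^1\times S^2$'s, and the fact that fiberedness of $\kappa(L)$ passes to $L$. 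Thus $L$ is a fibered two-component link of genus $0$; its fiber is an annulus $\widehat{A}$, and $L$ is the oriented boundary of a twisted annular neighbourhood of the core $C$ of $\widehat{A}$, with twisting equal to the linking number of $L$.

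It remains to show $C$ is unknotted; then $L$ is $J_m$ or $\overline{J_m}$ for some $m\ge 0$, matching total ranks forces $R_m=R_n$ hence $m=n$, and the Maslov gradings rule out the mirror since $J_n$ is chiral for every $n$. To see $C$ is unknotted, I would decompose the sutured manifold given by the exterior of $L$ with meridional sutures — whose sutured Floer homology is $\widehat{\HFK}(\kappa(L))$ — along the properly embedded annulus obtained from $\widehat{A}$. This identifies $\widehat{\HFK}(\kappa(L))$ with a knot-Floer-type invariant of $C$ tensored with a factor depending only on the framing, and a rank comparison then forces $\widehat{\HFK}(C)$ to have rank $1$, whence $C$ is the unknot. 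I expect this to be the main obstacle: one must analyze the annular decomposition precisely enough to recover $C$ itself, not merely bound its complexity, and keep the framing-dependent factor under enough control that the rank comparison is sharp. The remaining ingredients — transferring the genus and fiberedness detection from $\kappa(L)$ to $L$, and distinguishing $J_n$ from its mirror via Maslov gradings — are routine. Should the decomposition step prove delicate, an alternative is to invoke the classification of fibered links with annular fiber directly: such a link is determined up to mirroring by its core together with its twisting, so one again needs only the unknottedness of $C$, which reduces to the same rank comparison.
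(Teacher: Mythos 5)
There are two genuine problems. First, your description of $\widehat{\HFK}(J_n)$ is wrong in a way that breaks step (iii): for $n\geq 2$ the extremal Alexander gradings are \emph{not} one-dimensional. One has $\widehat{\HFK}(J_n)\cong \F^n[1]\oplus\F^{2n}[0]\oplus\F^n[-1]$, so the top grading has rank $n$, and consequently $J_n$ is not fibered. Indeed the exterior of $J_n$ does not fiber with annular fiber: cutting the complement along the unknotted annulus $A$ yields a solid torus, not $A\times I$ (the fiber of $T(2,2n)$ is the genus $n-1$ Bennequin surface, which is a Seifert surface only for the braid orientation, not for $J_n$). So the entire fiberedness thread --- ``$L$ is fibered with annular fiber, hence the boundary of a twisted neighbourhood of the core of the fiber'' --- is unavailable. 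What survives is weaker but sufficient: the Thurston-norm detection plus non-splitness shows the two components of $L$ cobound an embedded annulus, hence $L$ is a twisted $2$-cable of the core of that annulus; fiberedness is not needed for this.

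Second, the step you correctly flag as the main obstacle --- unknottedness of the core $C$ --- is left open, and the sutured decomposition you sketch is the wrong tool: without fiberedness the decomposition along the annulus will not produce the clean product identification your rank comparison needs. The paper closes this gap with two much simpler observations that your proposal is missing entirely. (a) Each \emph{component} $L_i$ of $L$ is an unknot: the spectral sequence from $\widehat{\HFK}(L)$ to $\widehat{\HFK}(L_i)\otimes V$ forces $\widehat{\HFK}(L_i)$ to be supported in Maslov gradings $0$ and $1$ only (since $\widehat{\HFK}(L)$ lives in Maslov gradings $-\tfrac12,\tfrac12,\tfrac32$), and the Maslov--Alexander symmetry of knot Floer homology then forces $\widehat{\HFK}(L_i)$ to be supported in Alexander grading $0$, i.e.\ $L_i$ is unknotted. (b) The core of an annulus is isotopic, through the annulus, to either of its boundary components; hence $C\simeq L_1$ is unknotted and $L=J_m$ for some $m$, after which your rank-and-Maslov-grading comparison finishes the argument as in the paper. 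Without (a) and (b) your proof does not close, and with them the sutured-manifold machinery is unnecessary.
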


	\newtheorem*{T33}{Theorem~\ref{L6n1:HFK}}
\begin{T33}

If $\widehat{\HFK}(L)\cong\widehat{\HFK}(T(3,3))$ then $L$ is isotopic to $T(3,3)$.
\end{T33}

\newtheorem*{L7n1}{Theorem~\ref{HFK:L7n1}}
\begin{L7n1}
If $\widehat{\HFK}(L)\cong \widehat{\HFK}(L7n1)$, then $L$ is isotopic to $L7n1$.
\end{L7n1}

We also prove the following Link Floer homology detection results:

\newtheorem*{T2n}{Theorem~\ref{T(2,2n)}}
	\begin{T2n}
	    If $\widehat{\HFL}(L)\cong\widehat{\HFL}(T(2,2n))$ for some $n$, then $L$ is isotopic to $T(2,2n)$.
	\end{T2n}
	
	\newtheorem*{Tnn}{Theorem~\ref{HFL:T(n,n)}}
	\begin{Tnn}
If $\widehat{\HFL}(L)\cong \widehat{\HFL}(T(n,n))$, then $L$ is isotopic to $T(n,n)$.
\end{Tnn}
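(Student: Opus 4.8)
The plan is to combine the formal properties of link Floer homology with an induction on $n$. First I would record the standard consequences of the hypothesis $\widehat{\HFL}(L)\cong\widehat{\HFL}(T(n,n))$: the grading lattice forces $L$ to have $n$ components; the graded Euler characteristic of $\widehat{\HFL}$ recovers the multivariable Alexander polynomial, so $\Delta_L\doteq\Delta_{T(n,n)}$; by Wang's theorem that $\widehat{\HFL}$ detects split links, $L$ is non-split; and by the Ozsv\'ath--Szab\'o theorem that $\widehat{\HFL}$ detects the Thurston norm, $S^3\setminus L$ and $S^3\setminus T(n,n)$ have the same Thurston norm polytope. By the theorem that $\widehat{\HFL}$ detects fibredness, $L$ is a fibred link whose fibre $F$ is dual to $(1,\dots,1)\in H^1(S^3\setminus L)$, and hence satisfies $\chi(F)=1-(n-1)^2$. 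Together with the Torres conditions for $\Delta_L$ and the inductive identification of sublinks below, this pins down all pairwise linking numbers of $L$ as $1$, so $F$ meets each component of $L$ in a single boundary circle and has genus $(n-1)(n-2)/2$.

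The base case $n\le 2$ is immediate: since $\widehat{\HFK}$ of the knotification of a link is a grading collapse of $\widehat{\HFL}$, the hypothesis forces $\widehat{\HFK}(L)\cong\widehat{\HFK}(T(n,n))$, and $T(1,1)$ is the unknot and $T(2,2)$ the Hopf link, each already detected by knot Floer homology. For the inductive step, assume the statement for $n-1$ and let $L'=L\setminus K$ be obtained by deleting one component $K$. Using the Ozsv\'ath--Szab\'o relationship between the link Floer homology of a link and that of a sublink---removing a pair of basepoints yields a spectral sequence from a grading collapse of $\widehat{\HFL}(L)$ abutting to $\widehat{\HFL}(L')$ tensored with a vector space whose dimension is controlled by the relevant linking numbers---together with the already-computed Alexander polynomial and Thurston norm of $L'$ (the Torres specialization of $\Delta_L$ is that of $T(n-1,n-1)$), I would force this spectral sequence to degenerate and conclude $\widehat{\HFL}(L')\cong\widehat{\HFL}(T(n-1,n-1))$. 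By the inductive hypothesis $L'=T(n-1,n-1)$, i.e.\ $L'$ is $n-1$ fibres of the Hopf fibration, so $S^3\setminus L'\cong\Sigma_{0,n-1}\times S^1$.

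It remains to locate the deleted component $K$ inside $S^3\setminus L'\cong\Sigma_{0,n-1}\times S^1$, knowing that $K$ links each boundary torus once and that the fibration of $S^3\setminus L$ restricts to this product piece with fibre of the prescribed Euler characteristic. I would exploit the product structure to show that $K$ is isotopic in $S^3\setminus L'$ to a section $\{\mathrm{pt}\}\times S^1$, i.e.\ to a further Hopf fibre, so that $L=L'\cup K$ is $n$ Hopf fibres, namely $T(n,n)$. This last step is the main obstacle: a priori $K$ could be a complicated knot in the Seifert fibred manifold $\Sigma_{0,n-1}\times S^1$, and ruling this out requires genuinely using the fibredness and Thurston norm data carried by $\widehat{\HFL}(T(n,n))$---for instance through the classification of surface bundles over $S^1$ with small-complexity fibre, or by showing that the Floer data forces the essential annuli and tori of $S^3\setminus L$ to be those of a Seifert fibred link complement and then appealing to the classification of such complements in $S^3$. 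A secondary technical point is the degeneration of the sublink spectral sequence in the inductive step, which is a rank count that must be carried out by hand since $T(n,n)$ is not Floer-homologically thin for $n\ge 3$.
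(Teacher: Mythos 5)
There is a genuine gap here, and it sits exactly where you flag ``the main obstacle'': after deleting a component $K$ and (assuming the induction goes through) identifying $L'=L\setminus K$ with $T(n-1,n-1)$, you are left trying to pin down an arbitrary knot $K$ in $\Sigma_{0,n-1}\times S^1$ using only its homology class and some Thurston-norm/fibredness data. You do not give an argument for this, and it is not a routine step --- nothing you have extracted from $\widehat{\HFL}$ rules out, say, a knotted or non-vertical curve in the correct homology class. The inductive step itself is also incomplete: the spectral sequence runs from $\widehat{\HFL}(L)$ to $\widehat{\HFL}(L')\otimes V$, and knowing the $E_1$ page plus the Alexander polynomial of the target does not by itself determine $\widehat{\HFL}(L')$; you would have to control the differentials, which you defer. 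So as written the proposal establishes the linking numbers and genus constraints but not the theorem.

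The missing ingredient is the braid-axis detection result, which is the workhorse of the paper's argument: for a non-split link, $L-L_i$ is a braid closure in the complement of $L_i$ if and only if the rank of $\widehat{\HFL}(L)$ in the top nonzero $A_i$ grading is $2^{n-1}$ (\cite[Proposition~1]{G}). Since $\widehat{\HFL}(T(n,n))$ has this property for every component, \emph{each} $L_i$ is a braid axis for the rest of $L$. The paper then shows (Lemma~\ref{positivelinking}) that the unique Maslov grading $0$ generator must survive every sublink spectral sequence, which forces all linking numbers to be non-negative, hence positive by braidedness; feeding this into the genus bound $\frac{n-1}{2}\geq g(L_i)+\sum_{j\neq i}\frac{\lk(L_i,L_j)}{2}$ yields $g(L_i)=0$ and $\lk(L_i,L_j)=1$ for all $i,j$. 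At that point the topology is rigid: an $n$-component link of unknots, pairwise linking number $1$, each component a braid axis for the others, must be $T(n,n)$ (induction on $n$, viewing $L-L_i$ as an $(n-1)$-braid in a solid torus rather than an arbitrary link in $\Sigma_{0,n-1}\times S^1$). The braidedness is precisely what replaces your unresolved ``locate $K$ in the product manifold'' step, and it also makes the sublink spectral-sequence bookkeeping unnecessary. If you want to salvage your outline, import that proposition and rerun your induction with the complement of $L_i$ replaced by a braid in a solid torus.
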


\newtheorem*{Hopf}{Proposition~\ref{HFLHopf}}
\begin{Hopf}
If knot Floer homology detects a link $L$, then link Floer homology detects $L \# H$ for each choice of component of $L$ to connect sum with.
\end{Hopf}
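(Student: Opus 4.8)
The plan is to use the knot-Floer hypothesis on $L$ only after first showing that the link-Floer hypothesis on $L\#H$ already forces the unknown link to be a connected sum with a Hopf link along a component; the deduction then closes up by a division argument. Two ingredients are essentially formal. First, knotification carries connected sum of links along components to connected sum of knots, so $\kappa(L\#H)\cong\kappa(L)\#\kappa(H)$ and hence $\widehat{\HFK}(L\#H)\cong\widehat{\HFK}(L)\otimes\widehat{\HFK}(\kappa(H))$ by the Künneth formula, where $\widehat{\HFK}(\kappa(H))$ is a fixed nonzero finite-dimensional bigraded $\F_2$-vector space. Second, the link Floer homology of a link and the knot Floer homology of its knotification determine one another: $\widehat{\HFL}(L)\cong\widehat{\HFK}(L)\otimes W_{\ell}$ for a fixed finite-dimensional bigraded vector space $W_\ell$ depending only on the number of components $\ell$, and $\ell$ is read off from either invariant. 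Since the Poincar\'e polynomials in question live in an integral domain, tensoring with a fixed nonzero bigraded vector space is injective on isomorphism classes of finite-dimensional bigraded vector spaces, so all of these relations may be ``divided through'' freely. In particular, from $\widehat{\HFL}(L')\cong\widehat{\HFL}(L\#H)$ one immediately gets that $L'$ has $\ell+1$ components and $\widehat{\HFK}(L')\cong\widehat{\HFK}(L\#H)$.

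The main step is to prove that $\widehat{\HFL}(L')\cong\widehat{\HFL}(L\#H)$ implies $L'\cong L_0\#H$ for some link $L_0$ and some choice of component. I would first record the topological reformulation: $L'$ is such a connected sum if and only if some component $\mu$ of $L'$ bounds an embedded disk in $S^3$ meeting $L'\setminus\mu$ transversally in exactly one point, equivalently $S^3\setminus\nu(L')$ contains an essential annulus one of whose boundary curves is a meridian of $\mu$ and the other a meridian of another component. To produce this configuration I would extract from the hypothesis (via the matching of the full Alexander-multigraded support with that of $L\#H$) a component $\mu$ along whose Alexander direction $\widehat{\HFL}(L')$ has minimal width; by the Thurston-norm detection theorem of Ozsv\'ath--Szab\'o this pins the relevant class in $H_2(S^3\setminus\nu(L'),\partial)$ to Thurston norm zero, so it is carried by a surface built from disks, annuli and tori. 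The delicate part is promoting this to the desired meridional annulus: one decomposes $S^3\setminus\nu(L')$ along a norm-minimizing representative, observes that the assumed isomorphism forces the sutured Floer homology of the resulting piece to be as small as possible, and invokes Juh\'asz's theorem that such a sutured manifold is a product, which exhibits the meridian. This is the step I expect to be the main obstacle: matching gradings between multi-graded link Floer homology and the sutured Floer homology of the decomposed complement, and ruling out the degenerate configurations (split-off unknotted components and the like, which are handled by the unlink and split-link detection results cited above), is the technical heart and is what makes the hypothesis ``knot Floer homology detects $L$'' genuinely necessary.

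Granting that $L'\cong L_0\#H$, with $L_0$ necessarily having the same number of components $\ell$ as $L$, the argument concludes at once. Combining $\widehat{\HFK}(L')\cong\widehat{\HFK}(L\#H)$ with $\widehat{\HFK}(L')\cong\widehat{\HFK}(L_0)\otimes\widehat{\HFK}(\kappa(H))$ and $\widehat{\HFK}(L\#H)\cong\widehat{\HFK}(L)\otimes\widehat{\HFK}(\kappa(H))$ and dividing by the common factor $\widehat{\HFK}(\kappa(H))$ yields $\widehat{\HFK}(L_0)\cong\widehat{\HFK}(L)$; since knot Floer homology detects $L$ by hypothesis, $L_0$ is isotopic to $L$, and therefore $L'\cong L_0\#H$ is isotopic to $L\#H$. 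The argument never used which component of $L$ was chosen for the connected sum, so the conclusion holds for every choice, completing the proof.
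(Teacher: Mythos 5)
Your overall architecture matches the paper's: use the Thurston-norm content of $\widehat{\HFL}$ to force a decomposition $L'\cong L_0\#H$, then cancel the Hopf factor and invoke the detection hypothesis to get $L_0\cong L$. However, the step you yourself flag as the ``main obstacle'' is precisely the step the paper needs, and you leave it undone: the passage from ``the class dual to the component $\mu$ has Thurston norm zero'' to ``$\mu$ bounds an embedded disk meeting the rest of the link in exactly one point.'' The sutured-decomposition/Juh\'asz route you sketch for it is both unexecuted and heavier than necessary; the paper reads the conclusion off directly from the Ozsv\'ath--Szab\'o Thurston-norm theorem, via the criterion that a component's Alexander grading has non-zero levels exactly $\{-1/2,1/2\}$ if and only if it bounds such a disk. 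Relatedly, your remark that ruling out degenerate configurations in this step ``is what makes the hypothesis that knot Floer homology detects $L$ genuinely necessary'' is confused: that hypothesis plays no role here and is used only afterwards to identify $L_0$ with $L$.

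Two further points. Your closing sentence does not prove what the proposition asserts: you show $L'$ is $L\#H$ along \emph{some} component, but ``detects $L\#H$ for each choice of component'' means that for a fixed component $c$, any $L'$ with $\widehat{\HFL}(L')\cong\widehat{\HFL}(L\#_cH)$ is isotopic to $L\#_cH$; you must still exclude $L'\cong L\#_{c'}H$ with $L\#_{c'}H$ not isotopic to $L\#_cH$. The observation that ``the argument never used which component was chosen'' shows only that the argument is uniform in $c$, not that it pins $c$ down; the paper closes this by comparing the multi-Alexander gradings of the components of $L$ inside $\widehat{\HFL}(L\#H)$ with those of $\widehat{\HFL}(L)$, since only the summed component has its grading spread by the $\{-1/2,1/2\}$ Hopf factor. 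Finally, the formula $\widehat{\HFL}(L)\cong\widehat{\HFK}(L)\otimes W_{\ell}$ is false: $\widehat{\HFK}$ is obtained by collapsing the multi-Alexander grading of $\widehat{\HFL}$ and genuinely loses information --- indeed Theorem~\ref{stronger} of this paper exhibits links with isomorphic $\widehat{\HFK}$ but non-isomorphic $\widehat{\HFL}$. You only ever use the (true) collapsed direction, so this does not sink the argument, but the claim cannot stand as written.
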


A consequence of these detection results is that every link currently known to be detected by Khovanov homology is also detected by either knot or link Floer homology. This leads to the following natural question;

\begin{question}\label{KhVsFloer}
Is there a link which Khovanov homology detects but which neither knot nor link Floer homology detects?
\end{question}

On the other hand, we show that there are infinitely many links detected by link Floer homology but which are detected by neither Khovanov homology nor knot Floer homology.

\newtheorem*{stronger}{Theorem~\ref{stronger}}
\begin{stronger}
   There exists infinitely many pairs of links $(L,L')$ such that link Floer homology detects $L$ and $L'$ but $\Kh(L) \cong \Kh(L')$ and $\widehat{\HFK}(L) \cong \widehat{\HFK}(L')$.
\end{stronger}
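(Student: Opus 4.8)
The plan is to obtain the pairs by reversing the orientation of a suitable sublink and to deduce the two detection statements from the results above. Transparency to Khovanov homology is the easy ingredient: reversing the orientation of a sublink $S\subseteq L$ leaves the Khovanov complex of $L$ unchanged and moves its bigrading only by a shift determined by the linking numbers between $S$ and $L\setminus S$, so when that total linking number vanishes we get $\Kh(L)\cong\Kh(L^S)$ on the nose, where $L^S$ denotes $L$ with the components of $S$ reversed. The point of the construction is that the same reversal reflects the Alexander gradings indexed by $S$ in the multigraded link Floer homology while fixing the others --- so $\widehat{\HFL}$ genuinely changes, which is exactly what we need if $\widehat{\HFL}$ is to tell $L$ and $L^S$ apart --- yet one can arrange that, after collapsing to the single Alexander grading, this reflection becomes invisible, so that $\widehat{\HFK}(L)\cong\widehat{\HFK}(L^S)$. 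The construction thus reduces to producing infinitely many links $L$ carrying a null-linking sublink $S$ such that $\widehat{\HFK}$ is insensitive but $\widehat{\HFL}$ sensitive to reversing $S$, such that $L^S$ is not isotopic to $L$, and such that both $L$ and $L^S$ are detected by link Floer homology.

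For the last, detection-theoretic, requirement the tools are Proposition~\ref{HFLHopf} and Theorems~\ref{T(2,2n)} and~\ref{HFL:T(n,n)}: one assembles $L$ from torus links and connected sums with a Hopf link, using the infinite family $J_n$ of Theorem~\ref{HFK:KN} as input to Proposition~\ref{HFLHopf} so as to generate infinitely many link Floer detected links, and one positions $S$ as a null-homologous sublink carrying a chirality or a non-invertible feature. Varying $n$ then yields infinitely many pairs, non-isotopy of $L$ and $L^S$ being witnessed by a classical invariant invisible to $\Kh$ and $\widehat{\HFK}$; the computations of $\Kh$ and $\widehat{\HFK}$ are then immediate from the first paragraph.

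The hard part is reconciling the three conditions. Forcing the linking of $S$ with its complement to be zero is what makes the pair transparent to $\Kh$, but it also makes it hard to keep $L^S$ from being isotopic to $L$ (reversing an entire invertible sublink changes nothing); making the collapsed $\widehat{\HFK}$ blind to a genuine reorientation while the multigraded $\widehat{\HFL}$ is not is a delicate grading computation; and the links one is forced into by these two demands are typically not literally among those covered by Proposition~\ref{HFLHopf} and Theorems~\ref{T(2,2n)} and~\ref{HFL:T(n,n)}, so one must re-run the proofs of those statements in the presence of the additional components to establish detection of both members of each pair. Once that is carried out, the theorem follows formally.
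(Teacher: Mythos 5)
There is a genuine gap: your proposal never produces a single pair $(L,L')$. The theorem is an existence statement, and the existence is precisely what you defer --- you write that the links forced on you by your constraints ``are typically not literally among those covered'' by the detection results and that ``one must re-run the proofs of those statements in the presence of the additional components,'' after which ``the theorem follows formally.'' That re-running is the entire content of the theorem. Moreover, the mechanism you choose (reversing the orientation of a sublink $S$ with $\lk(S,L\setminus S)=0$) faces obstructions you name but do not resolve. First, $L$ and $L^S$ have the same underlying unoriented link, so you need a component whose reversal is not realized by an ambient isotopy; every link appearing in Proposition~\ref{HFLHopf}, Theorem~\ref{T(2,2n)} and Theorem~\ref{HFL:T(n,n)} has invertible components, so none of the cited detection results hands you such an example. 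Second, reversing $S$ reflects the Alexander gradings indexed by $S$ \emph{and} shifts the Maslov grading of each generator by an amount depending on its $A_i$-coordinates; for the collapsed $\widehat{\HFK}$ to be literally unchanged you need an additional symmetry of $\widehat{\HFL}(L)$ in those coordinates, which you assert ``one can arrange'' but never exhibit. Until a concrete $L$ and $S$ are written down and these three conditions are checked for it, no pair has been constructed.

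For comparison, the paper's construction is entirely different and avoids orientation reversal. It takes two trees of unknots with the same number of vertices: the star $L_n$ and the double star $L_{(a,b)}$ with matching vertex count. Both are built from an unknot by iteratively connect-summing Hopf links, and the K\"unneth formulas show that $\Kh(L\# H)$ and $\widehat{\HFK}(L\# H)$ do not depend on which component of $L$ receives the Hopf summand; hence the two trees have isomorphic $\Kh$ and $\widehat{\HFK}$. By contrast, $\widehat{\HFL}$ remembers the tree: the Thurston norm detection shows which components bound disks meeting the rest of the link once, and peeling off these Hopf summands (Theorems~\ref{family1} and~\ref{family2}) shows link Floer homology detects every link in both families. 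Varying the vertex count gives the infinitely many pairs. If you want to salvage your approach, the honest way is to produce one explicit $L$ with a non-invertible, null-linking component, verify the grading symmetry making $\widehat{\HFK}$ blind to the reversal, and prove the two required detection statements from scratch; as it stands none of this is done.
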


Finally, we use some of our torus link detection results to derive applications to annular Khovanov homology. Annular Khovanov homology is an invariant of links in the thickened annulus $A \times I$, sometimes thought of as $S^3 \setminus U$ where $U$ is an unknot or the annular axis. To do this we utilize a generalization of the Ozsv\'{a}th-Szab\'{o} spectral sequence, which relates annular Khovanov homology and knot Floer homology of the lift of the annular axis $\tilde{U}$ in $\Sigma(L)$, the double branched cover of $L$~\cite{roberts_knot_2013,grigsby_khovanov_2010}.

\newtheorem*{3Braid}{Theorem~\ref{detect}}
\begin{3Braid}
Let $L \subseteq A \times I \subseteq S^3$ be an annular link. If $\AKh( L , \Z / 2\Z) \cong \AKh( \widehat{\sigma_1 \sigma_2} , \Z / 2\Z)$ then $L$ is isotopic to $ \widehat{\sigma_1 \sigma_2}$  in $A \times I$.
\end{3Braid}

\newtheorem*{4Braid}{Theorem~\ref{thm:4braid}}
\begin{4Braid}
Let $L \subseteq A \times I \subseteq S^3$ be an annular link. If $\AKh( L , \Z / 2\Z) \cong \AKh( \widehat{\sigma_1 \sigma_2\sigma_3} , \Z / 2\Z)$ then $L$ is isotopic to $ \widehat{\sigma_1 \sigma_2\sigma_3}$ in $A \times I$.
\end{4Braid}

\newtheorem*{6Braid}{Theorem~\ref{thm:6braid}}
\begin{6Braid}
    Let $L \subseteq A \times I \subseteq S^3$ be an annular link. If $\AKh( L , \Z / 2\Z) \cong \AKh( \widehat{\sigma_1 \sigma_2\sigma_3\sigma_4 \sigma_5} , \Z / 2\Z)$ then $L$ is isotopic to $ \widehat{\sigma_1 \sigma_2\sigma_3\sigma_4 \sigma_5}$ in $A \times I$.
\end{6Braid}

Notice that $\widehat{\sigma_1 \sigma_2}$, $ \widehat{\sigma_1 \sigma_2\sigma_3}$, and $ \widehat{\sigma_1 \sigma_2\sigma_3\sigma_4 \sigma_5}$ all represent the unknot when considered in $S^3$ but these braid closures are all non-trivial knots in $A \times I$.

This paper is organised as follows;  in Section~\ref{Section:Preliminaries} we briefly review knot and link Floer homology. In section~\ref{Section:JN} we prove that knot Floer homology detects $J_n$ and link Floer homology detects $T(2,2n)$. In section~\ref{Section:T(2,4)} we prove that knot Floer homology detects $T(2,4)$. In section~\ref{SectionT(2,6)} we prove that knot Floer homology detects $T(2,6)$. In section~\ref{section T(n,n)} we prove that link Floer homology detects $T(n,n)$. In section~\ref{section:L6n1} we prove that knot Floer homology detects $T(3,3)$. In section~\ref{section:L7n1} we prove that knot Floer homology detects $L7n1$. In section~\ref{section:connectsum} we prove that there are infinite families of links dectected by link Floer homology that also have the same Khovanov homology and knot Floer homology. Finally, in section~\ref{section:Khovanov} we prove the annular Khovanov homology results using some of our knot Floer detection results.

\subsection*{Acknowledgements}
The authors would like to thank John Baldwin, Eli Grigsby, Siddhi Krishna, and Braeden Reinoso for helpful conversations. The authors would also like to thank Eugene Gorsky for communicating to us the topological argument in the the proof of Theorem~\ref{HFK:KN}, Tye Lidman for helpful comments on an earlier version of the paper and suggesting the arguments involving link symmetries in Section~\ref{section:Khovanov}, and Daniel Ruberman for helpful conversations about symmetries of Seifert fibered links. 

\end{section}

\begin{section}{Knot Floer homology and link Floer homology}\label{Section:Preliminaries}

Knot Floer homology and link Floer homology are invariants of links in $S^3$ defined using a version of Lagrangian Floer homology~\cite{OS1,R,OS}. They are categorifications of the single variable and multivariable Alexander polynomials respectively. Here we briefly highlight the key features of knot Floer homology and link Floer homology that we use to obtain our detection results. Throughout this paper we work with coefficients in $\mathbb{Z}/2\mathbb{Z}$.

 Let $L$ be an oriented link in $S^3$, with components $L_1,L_2,\dots, L_n$. The link Floer homology of $L$ is a multi-graded vector space
$$\widehat{\HFL}(L)=\underset{d,A_1,\dots A_n}{\bigoplus}\widehat{\HFL}_d(L;A_1,\dots A_n)$$
The grading denoted by ``$d$" above is called the Maslov or algebraic grading, while the $A_i$ gradings are called the Alexander gradings. Each $A_i$ satisfies $2A_i+\lk(L_i,L-L_i)\in 2\Z$.

The knot Floer homology of $L$ is a vector space bi-graded by a Maslov grading and a single Alexander grading. The knot Floer homology of $L$ can be obtained by projecting the link Floer homology complex onto the diagonal of the multi-Alexander gradings, which becomes the Alexander grading, and adding $\frac{n-1}{2}$ to the Maslov grading. 

 We use a number of formal properties of knot and link Floer homology in proving our link detection results. The first of these is that link Floer homology has a symmetry relating the component of the complex supported in grading $(m, A_1, \ldots , A_n)$ with the component of the complex supported in grading $(m - 2 \underset{i=1}{\overset{n}{\sum}} A_i , -A_1 , \ldots , - A_n)$. Knot Floer homology enjoys the same symmetry property, since it can be defined by projecting the multi-Alexander gradings onto the diagonal. There is also a K\"unneth formula for computing the link Floer homology of a connected sum in terms of a tensor product of link Floer homologies.

The main formal property we will use, however, is that the link Floer homology of $L$ admits spectral sequences to the link Floer homologies of its sublinks~\cite[Lemmas 2.2 and 2.3]{BG},\cite{OS}. In particular, when $L_i$ is a sublink of $L$ there is a spectral sequence from $\widehat{\HFL}(L)$ to $\widehat{\HFL}(L-L_i) \otimes V^{\vert L_i \vert} $ shifting each Alexander grading by $\frac{1}{2}\lk(L_j,L_i)$. It follows that there is also a spectral sequence from $\widehat{\HFL}(L)$ to $\widehat{\HF}(S^3)\otimes V^{n-1}$, or equivalently that there is a spectral sequence from $\widehat{\HFK}(L)$ to $\widehat{\HF}(\#^{n-1}(S^1\times S^2))$. Here $V$ is the multi-graded vector space $\F\oplus\F$ with non-zero Maslov gradings $0$ and $-1$ and multi-Alexander grading $(0,\ldots, 0)$.

In addition to enjoying the above algebraic properties, $\widehat{\HFK}(L)$ and $\widehat{\HFL}(L)$ are known to reflect a number of topological properties of $L$. For starters, there is a number of things we can say about the number of components of $L$. Since $\widehat{\HFL}(L)$ admits a spectral sequence to $V^{n-1}$, a link is a knot if and only if $\rank(\widehat{\HFK}(L))$ is odd. Since the Maslov grading for $\widehat{\HFL}(L)$ is integer valued, while the Maslov gradings of $\widehat{\HFL}(L)$ are $\Z+\frac{n-1}{2}$ valued, it follows that if $\widehat{\HFK}(L)$ has support contained in grading $\Z$ then $L$ has an odd number of components, while if $\widehat{\HFK}(L)$ has support contained in grading $\Z$ then $L$ has an even number of components. Finally, since $\widehat{\HFK}(L)$ admits a spectral sequence to ${\widehat{\HF}(\#^{n-1}(S^1\times S^2))}$ -- which has a generator of Maslov grading $\frac{n-1}{2}$ -- we have that $n\leq 1+2\max\{m:\rank(\widehat{\HFK}_m(L))\neq 0\}$.

Moreover, since knot Floer homology categorifies the Alexander-Conway polynomial, and the Alexander-Conway polynomial polynomial number of two component links by a result of Hoste \cite{hoste_first_1985}, it follows that knot Floer homology detects the linking number of two component links.

 We will also make use of the fact that the link Floer homology of $L$ yields information about the topology of $S^3 - L$; in particular that link Floer homology detects the Thurston norm of $S^3 - L$~\cite{ozsvath2008linkFloerThurstonnorm}. Finally, if $L$ is not a split link then the top Alexander grading associated to a component $L_i$ determines if $L - L_i$ is a braid in the complement of $L_i$. Specifically, this happens exactly when the rank in maximal non-zero Alexander grading is $2^{n-1}$~\cite[Proposition~1]{G}.

\end{section}

\begin{section}{Knot Floer homology detects $J_n$}\label{Section:JN}
	
	Given that knot Floer homology detects the genus of a link, it is natural to try and detect links of small genus. The one component case is that of the unknot, which knot Floer homology is known to detect. The two component case is that of the $2$-cable links. The simplest of these are $2$-cables of unknots, i.e. the links $T(2,2n)$ with the orientation of one component reversed. We call these links $J_n$. In this section we show that knot Floer homology detects each $J_n$.
	\begin{theorem}\label{HFK:KN}
		If $\widehat{\HFK}(L) \cong \widehat{\HFK}(J_n)$ for some $n$, then $L$ is isotopic to $J_n$.
	\end{theorem}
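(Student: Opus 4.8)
The plan is to squeeze all the needed topological input out of the formal properties of $\widehat{\HFK}$ listed in Section~\ref{Section:Preliminaries}, reducing the problem to the structure of the complement of a single unknotted component. First I would compute $\widehat{\HFK}(J_n)$ explicitly — or rather record just the features I need. Since $J_n$ is obtained from $T(2,2n)$ by reversing one orientation, the linking number of its two components is $-n$ (or $+n$ up to the orientation convention), and its Seifert genus is $0$: $J_n$ bounds an annulus, so the Thurston norm of $S^3 - J_n$ is degenerate. Because knot Floer homology detects the linking number of two-component links (Hoste's theorem, as cited), any $L$ with $\widehat{\HFK}(L)\cong\widehat{\HFK}(J_n)$ has two components with linking number $\pm n$. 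Using the parity/rank observations (rank odd iff knot; $n \le 1 + 2\max\{m : \widehat{\HFK}_m \ne 0\}$; Maslov support mod-$\tfrac12$ detecting parity of the number of components) I would confirm $L$ has exactly two components.

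Next I would extract the genus and braid-axis information. Knot Floer homology detects the Seifert genus (equivalently the Thurston norm of the complement), so each component $L_i$ of $L$ is unknotted and $L-L_i$ sits in the complement of an unknot; and the top-Alexander-grading rank criterion of \cite[Proposition~1]{G} tells us whether $L-L_i$ is braided in the complement of $L_i$. For $J_n$ the top Alexander grading has rank $2^{n-1}=2$, so I expect to conclude that $L$, viewed as a knot in the solid torus complementary to either unknotted component, is a $1$-braid — i.e. each component is a braid axis for the other, with winding/linking number $n$. Alternatively, and this is the route I would actually push: $L_1$ is an unknot, so $S^3 \setminus L_1$ is a solid torus, and $L_2$ is a knot in that solid torus which is null-homologous after reversing orientation, of winding number $n$, lying on an annulus (genus data). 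A knot on an annulus in a solid torus, wrapping $n$ times, is forced to be the $(n, kn)$-torus knot pattern for some framing; since both components are unknots in $S^3$, the only consistent possibility is the $0$-framed $n$-cable, which is exactly $J_n$. This is the "topological argument" the authors credit to Gorsky in the acknowledgements, so I anticipate the write-up invokes a clean statement about essential annuli in solid tori (or uses that a fibered-like/Seifert-fibered structure on the complement is rigid).

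The main obstacle is the last step: ruling out the \emph{a priori} possibility that $L$ has the same $\widehat{\HFK}$ as $J_n$ but is a genuinely different link — e.g. a different satellite of the unknot with the same Alexander-grading profile, or a link whose complement is not Seifert fibered. Concretely, after establishing (i) two unknotted components, (ii) linking number $n$, (iii) genus $0$ (an annulus Seifert surface), (iv) the braid/winding condition, I still need that an annulus cobounding two unknots in $S^3$ forces the link to be $J_n$. I would handle this by a direct $3$-manifold argument: cut $S^3$ along the annulus $A$ bounded by $L$; each complementary unknot gives a solid-torus complement, and the annulus $A$ is incompressible there, so $A$ is isotopic to a standard annulus, and the way the two solid tori are glued along a neighborhood of $A$ is determined by the linking number. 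This pins the link down to $J_n$. If that argument has a gap for small $n$ (say $n=1$, the negative Hopf link, where things might degenerate), I would treat those base cases by hand using known Hopf link detection \cite{ni2007knot, ozsvath_holomorphic_2004-1}. I expect the genus-detection and $\cite[\text{Proposition~1}]{G}$ inputs to be routine; the rigidity of the annulus in the complement is where the real content lies.
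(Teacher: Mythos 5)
Your skeleton matches the paper's: show $L$ has two components, show both components are unknots, use genus detection to produce an annulus cobounded by the two components, conclude $L$ is a twisted $2$-cable of the unknot (hence some $J_m$), and finally pin down $m=n$. The paper's version of the last topological step is cleaner than your cut-along-$A$ argument --- the annulus exhibits $L$ as the $2$-cable of its core, and each component is isotopic to that core, so no incompressibility or standard-position discussion is needed --- and the paper distinguishes $J_m$ from $J_n$ by the ranks in each Maslov grading rather than by Hoste's linking-number result, but these are cosmetic differences.

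There is, however, one genuine gap: your justification that each component $L_i$ is unknotted. You derive this from ``knot Floer homology detects the Seifert genus (equivalently the Thurston norm of the complement),'' but $\widehat{\HFK}(L)$ carries only the single (diagonal) Alexander grading, so it sees only the Thurston norm of the Seifert class of $L$, not the norms of the classes controlling the genera of the individual components. Genus $0$ for the link is perfectly consistent with knotted components --- the $2$-cable of a trefoil also bounds an annulus --- so without unknottedness your annulus argument only yields that $L$ is a twisted $2$-cable of \emph{some} knot. The paper closes this with a separate argument (Lemma~\ref{Unknot:KN}): the spectral sequence from $\widehat{\HFK}(L)$ to $\widehat{\HFK}(L_i)\otimes V$ forces $\widehat{\HFK}(L_i)$ to be supported in Maslov gradings $0$ and $1$ only (since $\widehat{\HFK}(L)$ lives in Maslov gradings $-\tfrac12,\tfrac12,\tfrac32$), and the Maslov--Alexander symmetry of knot Floer homology then forces $\widehat{\HFK}(L_i)$ to be supported in Alexander grading $0$, so $L_i$ is the unknot. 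You need this (or an equivalent) step; note also that your fallback via \cite[Proposition~1]{G} does not apply as stated, since that criterion concerns the $A_i$-grading of $\widehat{\HFL}$ rather than the collapsed Alexander grading of $\widehat{\HFK}$, where the top-grading rank of $\widehat{\HFK}(J_n)$ is $n$, not $2$.
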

	
	For reference, we note that when $n$ is positive $\widehat{\HFK}(J_n)\cong \F^n_{\frac{3}{2}}[1]\oplus\F^{2n}_{\frac{1}{2}}[0]\oplus\F^n_{\frac{-1}{2}}[-1]$, where the subscript denotes the Maslov grading of the generator, and $[i]$ denotes the Alexander grading of a summand. When $n$ is negative then $\widehat{\HFK}(J_n)$ can be computed from the above formula using an understanding of how knot Floer homology is affected by mirroring.
	
	A consequence of this detection result is that link Floer homology detects the links $T(2,2n)$. This follows from Theorem~\ref{HFK:KN} by considering how link Floer homology changes under reversing the orientation of a single component.
	\begin{theorem}\label{T(2,2n)}
		If $\widehat{\HFL}(L)\cong\widehat{\HFL}(T(2,2n))$ for some $n$, then $L$ is isotopic to $T(2,2n)$.
	\end{theorem}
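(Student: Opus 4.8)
The plan is to deduce the link Floer detection of $T(2,2n)$ from the knot Floer detection of $J_n$ (Theorem~\ref{HFK:KN}), using the fact that $T(2,2n)$ and $J_n$ differ only by reversing the orientation of one component, together with the precise relationship between $\widehat{\HFL}$ and $\widehat{\HFK}$. First I would recall that, for a link $K$, the knot Floer homology $\widehat{\HFK}(K)$ is obtained from the link Floer complex $\widehat{\CFL}(K)$ by projecting the multi-Alexander grading onto its diagonal sum and shifting the Maslov grading by $\tfrac{n-1}{2}$; crucially, reversing the orientation of a component $L_i$ of a two-component link changes the $\widehat{\HFL}$ bigrading $(A_1, A_2)$ by a fixed invertible affine transformation determined by the linking number (negating the $i$-th Alexander grading and applying a Maslov shift), so $\widehat{\HFL}(T(2,2n))$ and $\widehat{\HFL}(J_n)$ determine one another. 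Thus from $\widehat{\HFL}(L) \cong \widehat{\HFL}(T(2,2n))$ I want to extract enough to pin down $\widehat{\HFK}$ of an orientation-reversed copy of $L$.

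The key steps, in order: (1) From $\widehat{\HFL}(L)\cong\widehat{\HFL}(T(2,2n))$, read off that $L$ has two components — e.g.\ via the Maslov grading being $\Z+\tfrac{1}{2}$-valued, or via the spectral sequence to $V^{n-1}$ forcing the total rank to behave like that of a two-component link. (2) Recover the linking number: the linking number $\lambda$ of the two components of $L$ equals that of $T(2,2n)$, namely $n$ (and $-n$ for $J_n$), since this is encoded in the Alexander gradings being half-integers shifted by $\tfrac12\lk$, or alternatively since $\widehat{\HFK}$ — obtained by diagonal projection — categorifies the Alexander–Conway polynomial, which detects linking number of two-component links by Hoste's result. (3) Let $L'$ be $L$ with one component's orientation reversed; then $\widehat{\HFL}(L')$ is obtained from $\widehat{\HFL}(L)$ by the orientation-reversal transformation above, hence $\widehat{\HFL}(L') \cong \widehat{\HFL}(J_n)$. (4) Project the diagonal and shift Maslov by $\tfrac12$ to get $\widehat{\HFK}(L')\cong\widehat{\HFK}(J_n)$. (5) Apply Theorem~\ref{HFK:KN} to conclude $L'$ is isotopic to $J_n$, hence $L$ — being $L'$ with a component reoriented — is isotopic to $T(2,2n)$ (reorienting a component of $J_n$ gives $T(2,2n)$, up to the usual overall orientation conventions fixed in the introduction).

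The main obstacle I expect is step (3)–(4): making precise and justifying that reversing a component's orientation acts on $\widehat{\HFL}$ by the stated bigrading change \emph{at the level of the invariant}, so that $\widehat{\HFL}(L)\cong\widehat{\HFL}(T(2,2n))$ genuinely forces $\widehat{\HFK}(L')\cong\widehat{\HFK}(J_n)$ rather than merely an abstract isomorphism of ungraded groups. This requires the functoriality of the orientation-reversal relabeling and a careful bookkeeping of the Maslov shift (which depends on $\lk$, itself recovered in step (2)), so the argument must sequence the linking-number computation before the orientation-reversal step. A secondary subtlety is confirming that an abstract isomorphism $\widehat{\HFL}(L)\cong\widehat{\HFL}(T(2,2n))$ can be taken to respect the multi-grading up to the overall affine ambiguity — but since $T(2,2n)$ is not split (its $\widehat{\HFL}$ does not split as a tensor product with $V$), the grading normalization is rigid enough, and the rest is a short topological observation relating $J_n$ and $T(2,2n)$ under reorientation.
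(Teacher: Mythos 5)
Your proposal is correct and follows essentially the same route as the paper, which derives Theorem~\ref{T(2,2n)} from Theorem~\ref{HFK:KN} precisely by noting how $\widehat{\HFL}$ transforms under reversing the orientation of one component (the paper leaves the bookkeeping you spell out implicit). The details you fill in — recovering the component count and linking number first, then applying the orientation-reversal grading change to land on $\widehat{\HFK}(J_n)$ — are the right ones.
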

	
	To prove Theorem~\ref{HFK:KN}, we will first prove that $L$ is a 2-component link and that both of the components of $L$ are unknots. Then we will use the fact that knot Floer homology detects genus to show that $J_n$ is detected among 2-component links with unknotted components. The topological argument used here was communicated to the authors by Eugene Gorsky~\cite{GorskyPersonalCommunication} and also appears in B.~Liu's classification of the links $T(2,2n)$ in terms of surgery to a Heegaard Floer $L$-space~\cite{LiuLspace}.
	
	\begin{lemma}\label{Unknot:KN}
		If $\widehat{\HFK}(L)\cong\widehat{\HFK}(J_n))$ for some $n$, then $L$ is a 2-component link and both of the components are unknots.
	\end{lemma}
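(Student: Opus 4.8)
The plan is to extract the three claims---$L$ has two components, each component is an unknot---from the explicit bigraded vector space $\widehat{\HFK}(J_n)$ recorded above, using only the formal properties collected in Section~\ref{Section:Preliminaries}. First I would pin down the number of components. Since $\rank\,\widehat{\HFK}(J_n) = 4n$ is even, $L$ is not a knot; and since the Maslov gradings appearing are all in $\Z + \tfrac12$, the parity discussion in Section~\ref{Section:Preliminaries} forces $L$ to have an even number of components. To rule out $4$ or more components I would invoke the bound $n \leq 1 + 2\max\{m : \rank\,\widehat{\HFK}_m(L) \neq 0\}$: here the top Maslov grading is $\tfrac32$, giving at most $1 + 3 = 4$ components, so a priori $L$ could have $2$ or $4$. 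To eliminate $4$, note that the spectral sequence from $\widehat{\HFK}(L)$ to $\widehat{\HF}(\#^{3}(S^1\times S^2))$ would require $\rank\,\widehat{\HFK}(L) \geq \rank\,\widehat{\HF}(\#^3(S^1\times S^2)) = 8$; but more sharply, the generator of $\widehat{\HF}(\#^3 S^1\times S^2)$ in top Maslov grading $\tfrac32$ must survive, and by symmetry so must the one in grading $-\tfrac32$, whereas $\widehat{\HFK}(J_n)$ is supported in Maslov gradings $\tfrac32, \tfrac12, -\tfrac12$ only---no generator in grading $-\tfrac32$---a contradiction. Hence $L$ has exactly $2$ components, and the Maslov gradings $\{\tfrac32,\tfrac12,-\tfrac12\}$ are consistent with the $\Z + \tfrac12$ requirement for a $2$-component link.

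Next I would show the linking number of the two components $L_1, L_2$ of $L$ is zero. By the remark at the end of Section~\ref{Section:Preliminaries} (via Hoste's result), knot Floer homology detects the linking number of a two-component link, and since $J_n$ is $T(2,2n)$ with one component reversed, $\lk(L_1,L_2) = -n$ (or $n$, depending on sign conventions); in any case it is the same as for $J_n$. Actually what I really need is slightly different: I want each $L_i$ to be unknotted. For this I would use that knot Floer homology detects the Seifert genus, applied to the sublinks. Concretely, the spectral sequence from $\widehat{\HFK}(L)$ to $\widehat{\HFK}(L - L_i) \otimes V$ (a $2$-component link's $\widehat{\HFK}$ degenerates to a knot's $\widehat{\HFK}$ tensored with $V = \F_0 \oplus \F_{-1}$) shows that $\rank\,\widehat{\HFK}(L_i) \leq \tfrac12 \rank\,\widehat{\HFK}(L) = 2n$, and the Alexander-grading support of $\widehat{\HFK}(L_i)$ is contained in $[-1,1]$ since that is the support of $\widehat{\HFK}(J_n)$ and the spectral sequence shifts Alexander gradings by $\tfrac12\lk(L_1,L_2)$ only in a controlled way. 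Since $\widehat{\HFK}$ detects genus and the top Alexander grading of $\widehat{\HFK}(L_i)$ is at most that of $\widehat{\HFK}(L)$ shifted appropriately, each $L_i$ has Seifert genus $0$, i.e.\ is an unknot. The main obstacle here is bookkeeping the Alexander-grading shift $\tfrac12\lk(L_j,L_i)$ precisely enough to conclude the top Alexander grading of the sublink knot Floer homology is $0$ rather than something larger; this requires knowing $\lk(L_1,L_2)$ first, which is why the linking-number step comes before the unknottedness step.

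I expect the linking-number determination to be the delicate point, because the clean statement available is for \emph{knot} Floer homology detecting the linking number of two-component links, and one must be careful that the hypothesis $\widehat{\HFK}(L) \cong \widehat{\HFK}(J_n)$ indeed transfers the full Alexander-polynomial information---i.e.\ that the bigraded (not merely graded) isomorphism determines the multivariable data needed. Once $\lk(L_1,L_2)$ is fixed and both components are shown to be unknots, the lemma is complete; the remaining work (identifying the $2$-component link with unknotted components and the given genus/linking data as precisely $J_n$) is deferred to the proof of Theorem~\ref{HFK:KN} proper. Throughout, the only inputs are: $\widehat{\HFK}$ detects genus, $\widehat{\HFK}$ detects linking number of $2$-component links, the component-counting inequalities, and the sublink spectral sequence---all recorded above---so no new machinery is needed.
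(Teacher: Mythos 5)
Your component count is fine and matches the paper: parity of the rank rules out a knot, and the spectral sequence to $\widehat{\HF}(\#^{n-1}(S^1\times S^2))$ --- whose generator in Maslov grading $-\tfrac{n-1}{2}$ would have to survive --- is incompatible with the support of $\widehat{\HFK}(J_n)$ in Maslov gradings $\{-\tfrac12,\tfrac12,\tfrac32\}$ unless $n=2$.

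The unknottedness step, however, has a genuine gap, and it is exactly at the point you flagged. You claim that the Alexander-grading support of $\widehat{\HFK}(L_i)$ is controlled because the support of $\widehat{\HFK}(L)$ is contained in $[-1,1]$. But the Alexander grading of $\widehat{\HFK}(L)$ is the \emph{diagonal} $A_1+A_2$ of the multi-Alexander grading on $\widehat{\HFL}(L)$, whereas the spectral sequence to $\widehat{\HFK}(L_i)\otimes V$ sees the individual grading $A_i$ (shifted by $\tfrac12\lk(L_1,L_2)$). A narrow diagonal support does not bound the individual $A_i$ supports: indeed for $J_n$ itself the $A_1$-grading of $\widehat{\HFL}(J_n)$ runs over $[-\tfrac n2,\tfrac n2]$ even though the diagonal is confined to $[-1,1]$ --- the large individual gradings cancel in the sum. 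So your claimed bound is false already in the model case, and no choice of bookkeeping of the linking-number shift will rescue the genus estimate from the diagonal grading alone. The paper sidesteps this by arguing with the \emph{Maslov} grading instead: the spectral sequence from $\widehat{\HFK}(L)$ to $\widehat{\HFK}(L_i)\otimes V$ (with $V$ in gradings $0$ and $-1$) forces $\widehat{\HFK}(L_i)$ to be supported in Maslov gradings $0$ and $1$ only, and then the symmetry $\widehat{\HFK}_m(K,A)\cong\widehat{\HFK}_{m-2A}(K,-A)$ shows any generator in Alexander grading $A\neq 0$ would produce a generator in a forbidden Maslov grading. Hence all of $\widehat{\HFK}(L_i)$ lives in Alexander grading $0$, so $g(L_i)=0$ by genus detection and $L_i$ is unknotted. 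You should replace your Alexander-grading bookkeeping with this Maslov-grading-plus-symmetry argument; the linking-number computation you front-load is not needed for this lemma (it reappears later, in the proof of the theorem itself).
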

	\begin{proof}[Proof of Lemma~\ref{Unknot:KN}]
		First we show that $L$ is a 2-component link. Notice that the parity of the rank of $\widehat{\HFK}(L)$ rules out the case that $L$ is a knot. If $L$ is an $n$-component link then there is a spectral sequence from $\widehat{\HFK}(L)$ to $\widehat{\HF}(\#^{n-1}(S^1\times S^2))$. Because $\widehat{\HFK}(L)$ is only non-zero in Maslov gradings $-\frac{1}{2},\frac{1}{2},$ and $\frac{3}{2}$ this spectral sequence can only exist for $n=2$.
		
		To see that both components of $L$ are unknotted, we consider the spectral sequences from $\widehat{\HFK}(L)$ to $\widehat{\HFK}(K)\otimes V $ where $K$ is a component of $L$. From this spectral sequence we see that $\widehat{\HFK}(K)$ is zero in all Maslov gradings except possibly $0$ and $1$. Considering how the Maslov grading changes under the symmetry of the Alexander grading for knot Floer homology we can see that $\widehat{\HFK}(K)$ can only be supported in Alexander grading $0$, so $K$ is an unknot.
	\end{proof}
	With Lemma~\ref{Unknot:KN}, we are now ready to prove Theorem~\ref{HFK:KN}. The key step is to deduce that $J_n$ is a cable of the unknot.
	\begin{proof}[Proof of Theorem~\ref{HFK:KN}]
		
		Considering the Alexander grading of $\widehat{\HFK}(L)$, we see that the two components of $L$ bound an annulus. Thus $L$ is the twisted 2-cable of some knot. Each component of $L$ is isotopic to the knot that was cabled and so $L$ is a twisted 2-cable of the unknot. This means that $L$ is $J_m$ for some $m$. Finally, a simple computation of the respective ranks in each Maslov grading shows that $\widehat{\HFK}(J_m) \cong\widehat{\HFK}(J_n) $ if and only if $m= n$. Thus $L$ is isotopic to $J_n$.
		
	\end{proof}

\end{section}

\begin{section}{Knot Floer homology detects $T(2,4)$}\label{Section:T(2,4)}

Here we will utilize the results of the previous section to obtain a detection result for the torus link $T(2,4)$. The link Floer homology of $T(2,4)$ is shown in Table~\ref{HFL:T24}, for reference.

\begin{theorem}\label{knotFloerT24}
    
    If $\widehat{\HFK}(L) \cong \widehat{\HFK}(T(2,4))$, then $L$ is isotopoic to $T(2,4)$.

\end{theorem}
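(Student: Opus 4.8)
The plan is to mimic the strategy used for $J_n$ in the previous section: first pin down the number of components of $L$ and the topology of its components, then use the fact that knot Floer homology detects genus (and Thurston norm) together with the spectral sequences to sublinks to identify $L$ exactly. First I would record $\widehat{\HFK}(T(2,4))$ explicitly from Table~\ref{HFL:T24} by projecting the multi-Alexander gradings onto the diagonal and shifting the Maslov grading by $\tfrac{n-1}{2}=\tfrac12$; this tells me in which Maslov gradings $\widehat{\HFK}(L)$ is supported and what the total rank is. Since the Maslov gradings lie in $\Z+\tfrac12$, $L$ has an even number of components, and the bound $n\le 1+2\max\{m:\widehat{\HFK}_m(L)\neq 0\}$ coming from the spectral sequence to $\widehat{\HF}(\#^{n-1}(S^1\times S^2))$ should force $n=2$. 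So $L$ is a two-component link.

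Next I would identify the two components. Using the spectral sequences from $\widehat{\HFK}(L)$ to $\widehat{\HFK}(K)\otimes V$ for each component $K$, the support of $\widehat{\HFK}(K)$ is constrained to a narrow band of Maslov gradings; combined with the Alexander-grading symmetry this should show each component is an unknot, exactly as in Lemma~\ref{Unknot:KN}. Then, reading off the linking number from the Alexander--Conway polynomial (knot Floer homology detects linking number of two-component links), I expect $\lk=2$, i.e. the same linking number as $T(2,4)$. At this point $L$ is a two-component link of unknots with linking number $2$. Now I would invoke the genus/Thurston-norm detection: the top Alexander grading of $\widehat{\HFK}(T(2,4))$ records the Seifert genus of the link (equivalently the Thurston norm of the complement), and this should force the two components of $L$ to cobound an annulus — so $L$ is a twisted $2$-cable of a knot, and since each component is unknotted, $L$ is a twisted $2$-cable of the unknot, hence $L=T(2,2m)$ or $J_m$ for some $m$. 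The orientation/parity of the Maslov gradings distinguishes $T(2,2m)$ from $J_m$, and a rank count in each Maslov grading pins down $m$, giving $L\cong T(2,4)$.

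The main obstacle I anticipate is the step that shows the two components \emph{cobound an annulus} rather than merely bounding disjoint Seifert surfaces of the expected genera: one must rule out, say, that $L$ is a non-fibered or more complicated link with the same Alexander data, and for this the full strength of the Thurston-norm detection of~\cite{ozsvath2008linkFloerThurstonnorm} together with the top-grading rank being $2^{n-1}$ (Proposition~1 of~\cite{G}, which detects when a sublink is a braid in the complement of a component) is the key input — establishing that the annulus genuinely exists, and that "twisted $2$-cable of the unknot with linking number $2$" leaves only $T(2,4)$ among candidates with the right $\widehat{\HFK}$, is where the care is needed. A secondary subtlety is bookkeeping the half-integer Maslov shift correctly when passing between $\widehat{\HFL}$ of $T(2,4)$ and $\widehat{\HFK}(L)$, so that the rank-by-Maslov-grading comparison at the end is valid.
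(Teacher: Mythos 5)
Your opening steps (Maslov-grading parity and the spectral sequence to $\widehat{\HF}(\#^{n-1}(S^1\times S^2))$ give $n=2$; Hoste's theorem gives $\lk(L_1,L_2)=2$; the spectral sequences to $\widehat{\HFK}(L_i)\otimes V$ constrain the components) agree with the paper's Lemma~\ref{HFKT24unknots}, although note that the argument of Lemma~\ref{Unknot:KN} does not transfer ``exactly'': $\widehat{\HFK}(T(2,4))$ is supported in five Maslov gradings, not three, so the narrow-band argument that forces each component to be an unknot for $J_n$ needs to be replaced (e.g.\ by a rank count ruling out trefoil components), and in fact the paper only extracts the position of the Maslov-index-$0$ generator of each $\widehat{\HFK}(L_i)$.

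The genuine gap is the annulus step. With the orientation fixed throughout the paper ($T(2,2n)$ is the closure of $\sigma_1^{2n}$), the top Alexander grading of $\widehat{\HFK}(T(2,4))$ is $2$, which via the Thurston-norm/genus detection corresponds to a Seifert surface of Euler characteristic $-2$ (a genus-one surface with two boundary components), \emph{not} an annulus. The two components of $T(2,4)$ cobound an annulus only after reversing the orientation of one component --- that is the $J_2$ situation --- and $\widehat{\HFK}(L)$ does not transform in a controlled way under reversing the orientation of a single component; only the multi-graded $\widehat{\HFL}(L)$ does (the Alexander gradings shift by linking numbers). So the step ``the top Alexander grading forces the components to cobound an annulus, hence $L$ is a twisted $2$-cable of the unknot'' fails as stated, and ``fibered, unknotted components, linking number $2$, genus-one fiber'' is not by itself a characterization of $T(2,4)$. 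The paper's proof supplies exactly the missing ingredient: it reconstructs the full bi-graded $\widehat{\HFL}(L)$ from $\widehat{\HFK}(L)$ (placing the Maslov-$0$ generator at $(1,1)$, propagating generators by the spectral sequences and the symmetry, and pinning the last two generators at $(0,0)$ by parity of the rank in each $A_i$-grading), concludes $\widehat{\HFL}(L)\cong\widehat{\HFL}(T(2,4))$, and only then invokes Theorem~\ref{T(2,2n)}, whose proof is where the orientation reversal to $J_n$ and the annulus argument legitimately take place. To repair your proposal you would need to insert this $\widehat{\HFL}$-reconstruction step (or some equivalent way of accessing the Thurston norm in the reversed-orientation homology class).
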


To prove this we show the following lemma:

\begin{lemma}\label{HFKT24unknots}
	If $\widehat{\HFK}(L) \cong \widehat{\HFK}(T(2,4))$ then $L$ consists of two components, $L_1,L_2$ such that each $\widehat{\HFK}(L_i)$ has a unique Maslov index $0$ generators. Moreover, that generator is supported in Alexander grading $0$ in $\widehat{\HFK}(L_i)$.
\end{lemma}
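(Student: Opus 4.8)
The plan is to extract the required information about the components $L_1, L_2$ purely from the spectral-sequence machinery and grading symmetries recalled in Section~\ref{Section:Preliminaries}, together with the explicit form of $\widehat{\HFK}(T(2,4))$. First I would record that $\widehat{\HFK}(T(2,4))$ is supported only in Maslov gradings $\tfrac{3}{2}, \tfrac{1}{2}, -\tfrac{1}{2}$ and the like (read off from Table~\ref{HFL:T24}), with all gradings lying in $\Z + \tfrac{1}{2}$; exactly as in the proof of Lemma~\ref{Unknot:KN}, the half-integrality of the Maslov grading forces $L$ to have an even number of components, and the existence of a spectral sequence from $\widehat{\HFK}(L)$ to $\widehat{\HF}(\#^{n-1}(S^1 \times S^2))$ — whose top Maslov grading is $\tfrac{n-1}{2}$ — together with the bound $n \le 1 + 2\max\{m : \rank \widehat{\HFK}_m(L) \neq 0\}$ pins down $n = 2$. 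So $L = L_1 \cup L_2$ with two components.

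Next I would run, for each component $L_i$, the spectral sequence from $\widehat{\HFK}(L)$ to $\widehat{\HFK}(L_i) \otimes V$. Since $V$ contributes Maslov gradings $0$ and $-1$, and the source is concentrated in Maslov gradings within a window of width $2$ around $\tfrac{1}{2}$, the target $\widehat{\HFK}(L_i) \otimes V$ — hence $\widehat{\HFK}(L_i)$ — can only be supported in Maslov gradings $0$ and $1$ (one shifts the window by the $+\tfrac{n-1}{2} = \tfrac12$ convention relating $\widehat{\HFK}$ of a link to that of its knotification, so that a knot lands in $\Z$-gradings). Now $L_i$ is a \emph{knot}, so $\rank \widehat{\HFK}(L_i)$ is odd; combined with the symmetry exchanging Maslov grading $m$ in Alexander grading $A$ with Maslov grading $m - 2A$ in Alexander grading $-A$, the only way to have odd total rank supported in Maslov gradings $\{0,1\}$ is for there to be an odd number of generators in Maslov grading $0$ and Alexander grading $0$ (a generator in Maslov $0$, Alexander $A \neq 0$ would be paired by the symmetry with one in Maslov $-2A \notin \{0,1\}$, a contradiction; generators in Maslov $1$, Alexander $A$ pair with Maslov $1-2A$, so $A = 0$ forces Maslov $1$ again and these come in... ) — I would need to check the bookkeeping here carefully, but the upshot is that $\widehat{\HFK}(L_i)$ has odd rank in $(m,A) = (0,0)$ and the generators in Maslov grading $1$ all sit in Alexander grading $0$ as well, self-paired under symmetry, contributing even rank.

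The remaining point — that the Maslov-$0$ generator is in fact \emph{unique} — requires quantitative control, and this is where I expect the real work to be. Here I would compare ranks across the spectral sequence $\widehat{\HFK}(L) \Rightarrow \bigoplus_i$, or rather use the rank of $\widehat{\HFK}(T(2,4))$ in each Maslov grading directly: if $\widehat{\HFK}(L_i)$ had $\ge 3$ generators in Maslov grading $0$, then $\widehat{\HFK}(L_i) \otimes V$ would already be too large in the relevant Maslov gradings to be a subquotient of $\widehat{\HFK}(L) \cong \widehat{\HFK}(T(2,4))$, whose total rank is small (one reads, say, rank $8$ from the table). More precisely, each differential in the spectral sequence from $\widehat{\HFK}(L)$ to $\widehat{\HFK}(L_i) \otimes V$ can only decrease rank, and $\dim \widehat{\HFK}(L_i)\otimes V = 2\dim\widehat{\HFK}(L_i) \le \dim\widehat{\HFK}(T(2,4))$, so $\dim\widehat{\HFK}(L_i) \le 4$; a rank-$4$ knot Floer homology of a knot is impossible (rank must be odd), forcing $\dim \widehat{\HFK}(L_i) \le 3$, and by the parity/symmetry analysis above $\dim\widehat{\HFK}(L_i) \in \{1,3\}$ with a \emph{single} generator in Maslov grading $0$ in either case. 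Since that generator is the unique one in its Maslov grading and sits in Alexander grading $0$ by the argument above, the lemma follows. The main obstacle is thus purely the rank bookkeeping in this last step — making sure the spectral-sequence rank inequalities are applied in the right direction and that the convention shift $+\tfrac{n-1}{2}$ is tracked consistently — rather than any conceptual difficulty.
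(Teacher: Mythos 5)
Your opening step (pinning down $n=2$ via the spectral sequence to $\widehat{\HF}(\#^{n-1}(S^1\times S^2))$ and rank parity) matches the paper. But the core of your argument rests on a false premise: you assert that $\widehat{\HFK}(T(2,4))$ is ``concentrated in Maslov gradings within a window of width $2$ around $\tfrac12$.'' That is the shape of $\widehat{\HFK}(J_2)$ (the same link with one component's orientation reversed), not of $T(2,4)$. Reading off Table~\ref{HFL:T24} and applying the $+\tfrac{n-1}{2}$ shift, $\widehat{\HFK}(T(2,4))$ is supported in Maslov gradings $\tfrac12,-\tfrac12,-\tfrac32,-\tfrac52,-\tfrac72$ --- a window of width $4$. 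Consequently the target $\widehat{\HFK}(L_i)\otimes V$ of the spectral sequence can a priori occupy Maslov gradings $0$ through $-4$, so $\widehat{\HFK}(L_i)$ can occupy gradings $0$ through $-3$ (and certainly not grading $1$, since nothing in the source lies above the top of $V$ tensored with the Maslov-$0$ level). With that corrected window, your symmetry argument no longer forces the Maslov-$0$ generator into Alexander grading $0$: for example, a right-handed trefoil component, with $\widehat{\HFK}$ supported at $(M,A)=(0,1),(-1,0),(-2,-1)$, is perfectly consistent with your constraints (including your rank bound $\rank\widehat{\HFK}(L_i)\le 3$), yet its unique Maslov-$0$ generator sits in Alexander grading $1$. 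So the ``moreover'' clause of the lemma --- the actual content --- is not established.

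The missing ingredients are exactly what the paper uses to locate that generator. First, since knot Floer homology categorifies the Alexander--Conway polynomial and Hoste's theorem says that polynomial determines the linking number of a two-component link, $\lk(L_1,L_2)=2$. Second, the unique Maslov-$0$ generator of $\widehat{\HFL}(L)$ must survive the spectral sequence to $\widehat{\HFL}(L_i)\otimes V$, whose $A_i$-grading is that of $\widehat{\HFK}(L_i)$ shifted by $\tfrac{\lk(L_1,L_2)}{2}$; summing the two Alexander coordinates of that generator gives $A_1+A_2+\lk(L_1,L_2)=2$, hence $A_1+A_2=0$. Third, Lemma~\ref{positiveAlex} (which also delivers the uniqueness claim, without any rank bookkeeping) gives $A_i\ge 0$, forcing $A_1=A_2=0$. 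None of these three steps appears in your proposal, and without them the trefoil-type configuration above is not excluded.
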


  We then show that $L$ has the same link Floer homology as $T(2,4)$ using structural properties of link Floer homology and apply Theorem~\ref{T(2,2n)} to complete the proof.

	\begin{table}[]\begin{center}

		\begin{tabular}{l|llll}
			$1$  &    & $\F_{-1}$  & $\F_0$  &  \\
			$0$  &  $\F_{-3}$   &  $\F_{-2}^2$  &  $\F_{-1}$  &  \\
			$-1$ & $\F_{-4}$    & $\F_{-3}$   &   &  \\ \hline
			& $-1 $& $0$ & $1$ & 
		\end{tabular}	\end{center}
	\caption{The link Floer homology of $T(2,4)$. The coordinates give the multi-Alexander grading, the subscript gives the Maslov grading.}\label{HFL:T24}
	\end{table}

The following lemma will be useful in proving Lemma~\ref{HFKT24unknots}:

\begin{lemma}\label{positiveAlex}
	Suppose $K$ is a component of a link $L$ such that $\widehat{\HFL}(L)$ is supported in Maslov gradings at most zero with a unique Maslov grading zero generator. Then there is a unique Maslov index grading $0$ generator in $\widehat{\HFK}(K)$, and it is of non-negative Alexander grading.
\end{lemma}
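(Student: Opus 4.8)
The plan is to analyze the spectral sequence obtained by forgetting every component of $L$ except $K$. Write $n$ for the number of components of $L$ and take $L_i = L - K$ in the spectral sequence from $\widehat{\HFL}(L)$ to $\widehat{\HFL}(L-L_i)\otimes V^{|L_i|}$ recalled in Section~\ref{Section:Preliminaries}. This gives a spectral sequence from $\widehat{\HFL}(L)$ to $\widehat{\HFL}(K)\otimes V^{n-1} = \widehat{\HFK}(K)\otimes V^{n-1}$, the identification of the first factor being because $K$ is a knot. The differentials on every page lower the Maslov grading by one, so since the first page $\widehat{\HFL}(L)$ is supported in Maslov gradings at most zero, every page is, and in particular so is $E_\infty = \widehat{\HFK}(K)\otimes V^{n-1}$. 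Because $V^{n-1}$ has a generator in Maslov grading zero, this forces $\widehat{\HFK}(K)$ to be supported in Maslov gradings at most zero.

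Next I would pin down $\widehat{\HFK}_0(K)$. For the lower bound, the knot $K$ admits a spectral sequence to $\widehat{\HF}(S^3) = \F$, which is supported in Maslov grading zero; since $\widehat{\HF}(S^3)$ is a subquotient of $\widehat{\HFK}(K)$, we get $\widehat{\HFK}_0(K)\neq 0$. For the upper bound, look again at the spectral sequence above: on the first page the Maslov grading zero part is one-dimensional by hypothesis, and no differential can map into Maslov grading zero since such a differential would originate in Maslov grading one, where the complex vanishes; hence the Maslov grading zero part of $E_\infty$ is at most one-dimensional. Because neither tensor factor of $E_\infty = \widehat{\HFK}(K)\otimes V^{n-1}$ has positive Maslov grading, its Maslov grading zero part is exactly $\widehat{\HFK}_0(K)\otimes (V^{n-1})_0$, and $(V^{n-1})_0$ is one-dimensional; combining the two bounds gives $\dim \widehat{\HFK}_0(K) = 1$, i.e.\ a unique Maslov grading zero generator.

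Finally, for the Alexander grading, let $a$ be the Alexander grading of this generator. By the symmetry of knot Floer homology, $\widehat{\HFK}_0(K,a)\cong \widehat{\HFK}_{-2a}(K,-a)$, so $\widehat{\HFK}_{-2a}(K)\neq 0$; since $\widehat{\HFK}(K)$ is supported in Maslov gradings at most zero, we must have $-2a\leq 0$, that is $a\geq 0$. I expect the only delicate point to be the bookkeeping in the spectral sequence, namely that its differentials drop the Maslov grading by one and that there is no global Maslov shift when identifying $E_\infty$ with $\widehat{\HFK}(K)\otimes V^{n-1}$, together with the elementary observation that the Maslov grading zero part of a tensor product of two spaces supported in nonpositive Maslov gradings is the tensor product of their Maslov grading zero parts; everything else is a direct application of the properties of $\widehat{\HFK}$ and $\widehat{\HFL}$ already recorded in Section~\ref{Section:Preliminaries}.
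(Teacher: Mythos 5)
Your proposal is correct and follows essentially the same route as the paper: run the spectral sequence from $\widehat{\HFL}(L)$ to $\widehat{\HFK}(K)\otimes V^{n-1}$, use that the latter is a Maslov-graded subquotient of the former to force $\widehat{\HFK}(K)$ into non-positive Maslov gradings with a single grading-zero generator, and then invoke the symmetry $\widehat{\HFK}_0(K,a)\cong\widehat{\HFK}_{-2a}(K,-a)$ to rule out $a<0$. The only difference is that you spell out the uniqueness and the grading bookkeeping in more detail than the paper does.
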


\begin{proof}
	The Maslov grading $0$ generator must persist under the spectral sequence from $\widehat{\HFL}(L)$ to $\widehat{\HFL}(K)\otimes V^{|L|-1}$, as else it cannot persist to $\widehat{\HF}(S^3) \otimes V^{|L|-1}$. If this generator sat in a negative Alexander grading then the symmetry properties of knot Floer homology would imply that there is a positive Maslov index generator in $\widehat{\HFL}(K)\otimes V^{|L|-2}$. However there are no positive Maslov index generators in $\widehat{\HFL}(L)$ and so there are none in $\widehat{\HFL}(K)\otimes V^{|L|-1}$.
\end{proof}
 
 With this in hand we can prove Lemma~\ref{HFKT24unknots}:

\begin{proof}[Proof of Lemma~\ref{HFKT24unknots}]

Suppose $L$ is an $n$ component link such that  $\widehat{\HFK}(L) \cong \widehat{\HFK}(T(2,4))$. Then $n\leq 2$ since $\widehat{\HFK}(L)$ admits a spectral sequence to $\widehat{\HF}(\#^{n-1}S^1\times S^2)$. Indeed, since $\rank(\widehat{\HFK}(L))$ is odd for knots, we have that $n=2$. Since knot Floer homology detects the linking number of two component links, it follows that $\lk(L_1,L_2)=2$.

	 There is only one generator in Maslov grading $0$ and it must survive in the spectral sequences from $\widehat{\HFL}(L)$ to $\widehat{\HFL}(L_i)\otimes V$. We call this generator $\theta_0$. The bi-Alexander grading for $\theta_0$ is then $(A_1 + \frac{l}{2} , A_2 + \frac{l}{2})$ where $A_i$ is the Alexander grading of the generator in Maslov grading $0$ in $\widehat{\HFK}(L_i)$ and $l$ is the linking number between the components. Since $A_1 + \frac{l}{2} + A_2 + \frac{l}{2} = 2$, and $l=2$, it follows that $A_1+A_2=0$. By Lemma~\ref{positiveAlex}, $A_1=A_2=0$, as desired.
\end{proof}

To complete our proof of Theorem~\ref{knotFloerT24} we show that if $L$ has the same knot Floer homology as $T(2,4)$ then $L$ also has same link Floer homology as $T(2,4)$. This result combined with Theorem~\ref{T(2,2n)} proves Theorem~\ref{knotFloerT24}.
	
	\begin{proof}[Proof of Theorem~\ref{knotFloerT24}]
	    Suppose $L$ is a link such that $ \widehat{\HFK}(L) \cong \widehat{\HFK}(T(2,4))$. We seek to understand $\widehat{\HFL}(L )$. From the argument in the proof of Lemma~\ref{HFKT24unknots} we know that the only Maslov grading $0$ generator of $\widehat{\HFL}(L )$ sits in bi-Alexander grading $(1,1)$.

	 Since there are spectral sequences from $\widehat{\HFL}(L)$ to $\widehat{\HFK}(K_i) \otimes V$ for each $i$, we see that there are also generators of $\widehat{\HFL}(L)$ in $(A_1,A_2)$ gradings $(1, 0)$ and $(0 , 1)$. The symmetry of $\widehat{\HFK}(L)$ gives generators at $(-1, -1)$, $(-1, 0)$ and $(0 , -1)$ as well. The are now two more generators to add so that the link Floer homology has rank $8$. To maintain an even rank in each $A_i$ grading, they both must be added at the same bi-grading. The only way to do this and maintain symmetry is to add them at $(0,0)$ so that $\widehat{\HFL}(L) \cong \widehat{\HFL}(T(2,4))$ and Theorem~\ref{T(2,2n)} shows $L$ is isotopic to $T(2,4)$.
	
		\end{proof}

\end{section}
\begin{section}{Knot Floer homology detects $T(2,6)$}\label{SectionT(2,6)}

In the previous section we showed that knot Floer homology detects the torus link $T(2,4)$. The torus link $T(2,6)$ is then a natural candidate for detection results. In this section we show that knot Floer homology indeed detects $T(2,6)$.

\begin{theorem}\label{HFKT26}
   If $\widehat{\HFK}(L) \cong \widehat{\HFK}(T(2,6))$, then $L$ is isotopic to $T(2,6)$.
\end{theorem}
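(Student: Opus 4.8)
\emph{Proof proposal.} The plan is to mirror the proof of Theorem~\ref{knotFloerT24}: from the hypothesis deduce that $L$ is a two–component link whose components are unknots of linking number $3$, upgrade this to a complete determination of $\widehat{\HFL}(L)$ using the structural properties of link Floer homology, and then invoke Theorem~\ref{T(2,2n)}. As preliminary input I would record (by an explicit computation in the spirit of Table~\ref{HFL:T24}) that $\widehat{\HFL}(T(2,6))$ is supported in Maslov gradings $\le 0$ with a single generator in Maslov grading $0$, occurring in the top multi–Alexander grading $(\tfrac32,\tfrac32)$; equivalently $\widehat{\HFK}(T(2,6))$ has a unique generator of maximal Maslov grading, which is $\tfrac12$, and it sits in Alexander grading $3$. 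Also $\lk$ of the two components of $T(2,6)$ is $3$, and $\rank \widehat{\HFL}(L)=\rank\widehat{\HFK}(L)=\rank\widehat{\HFK}(T(2,6))$ since diagonal projection only re-bins generators.

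For the component count the argument of Lemma~\ref{HFKT24unknots} applies verbatim: the parity of $\rank\widehat{\HFK}(L)$ rules out $L$ being a knot, and the spectral sequence from $\widehat{\HFK}(L)$ to $\widehat{\HF}(\#^{n-1}(S^1\times S^2))$ together with the bound $n\le 1+2\max\{m:\widehat{\HFK}_m(L)\neq 0\}=2$ forces $n=2$. Since knot Floer homology detects the linking number of two–component links, $\lk(L_1,L_2)=3$. Next, following Lemmas~\ref{positiveAlex} and~\ref{HFKT24unknots}, the unique Maslov grading $0$ generator $\theta_0$ of $\widehat{\HFL}(L)$ survives each spectral sequence to $\widehat{\HFK}(L_i)\otimes V$, so each $\widehat{\HFK}(L_i)$ has a unique Maslov grading $0$ generator, lying in a non-negative Alexander grading $A_i$ by the knot Floer symmetry and the absence of positive Maslov gradings; the multi–Alexander grading of $\theta_0$ is $(A_1+\tfrac32,A_2+\tfrac32)$ and its total Alexander grading equals that of the unique top generator of $\widehat{\HFK}(L)$, namely $3$, so $A_1+A_2=0$, hence $A_1=A_2=0$. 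With genus detection this shows each $L_i$ is an unknot and pins $\theta_0$ at $(\tfrac32,\tfrac32)$.

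The heart of the argument — and the step I expect to be the main obstacle — is to show that these constraints already force $\widehat{\HFL}(L)\cong\widehat{\HFL}(T(2,6))$. I would play off: the diagonal projection of $\widehat{\HFL}(L)$ recovering $\widehat{\HFK}(T(2,6))$; the symmetry $(m,A_1,A_2)\mapsto(m-2(A_1+A_2),-A_1,-A_2)$ and the component–swap symmetry; evenness of the rank in each fixed $A_i$–grading (from the spectral sequence to $V$); the sublink spectral sequences from $\widehat{\HFL}(L)$ to $\widehat{\HFL}(U)\otimes V$, which dictate generators in prescribed bi–gradings; and crucially the Thurston norm of $S^3\setminus L$, which is detected by $\widehat{\HFL}$ and which the preceding steps determine. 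Because $\widehat{\HFL}(T(2,6))$ has strictly larger rank than $\widehat{\HFL}(T(2,4))$, I anticipate that — unlike in the $T(2,4)$ case — the bare combinatorics of projection, symmetry and parity will not be rigid, so that the Thurston–norm constraint, together with the criterion that the top $A_i$–graded piece has rank $2^{n-1}$ (detecting that each component is a braid in the complement of the other), will be needed to eliminate the spurious symmetric fillings of the projection data and single out $\widehat{\HFL}(T(2,6))$.

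Once $\widehat{\HFL}(L)\cong\widehat{\HFL}(T(2,6))$ is established, Theorem~\ref{T(2,2n)} immediately yields that $L$ is isotopic to $T(2,6)$, completing the proof.
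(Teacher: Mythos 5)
Your setup is correct and coincides with the paper's: the component count, the linking number, the use of Lemma~\ref{positiveAlex} to place the Maslov grading $0$ generator at $(\tfrac32,\tfrac32)$, and the plan of determining $\widehat{\HFL}(L)$ and then invoking Theorem~\ref{T(2,2n)} are all exactly what the paper does. But the step you yourself flag as ``the main obstacle'' --- showing that the constraints force $\widehat{\HFL}(L)\cong\widehat{\HFL}(T(2,6))$ --- is not actually carried out. You list a toolbox (symmetry, parity, sublink spectral sequences, Thurston norm, the rank-$2^{n-1}$ braid criterion) and conjecture that the purely combinatorial constraints will fail to be rigid and that the Thurston norm will be needed to kill ``spurious symmetric fillings,'' but you never exhibit the argument. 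As it stands this is a genuine gap: the theorem is reduced to an unproved claim about the structure of $\widehat{\HFL}(L)$.

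Moreover, your anticipation about what is needed is off. The paper closes this step with only the sublink spectral sequences and the symmetries, no Thurston norm or braidedness input: starting from $\theta_0$ at $(\tfrac32,\tfrac32)$, the requirement that every generator either survive to $\widehat{\HF}(S^3)\otimes V$ or cancel against a generator in an adjacent Alexander bi-grading forces Maslov index $-1$ generators at $(\tfrac12,\tfrac32)$ and $(\tfrac32,\tfrac12)$ and a Maslov index $-2$ generator at $(\tfrac12,\tfrac12)$; the one remaining Maslov index $-2$ generator is placed at an unknown $(y,1-y)$, which in turn forces Maslov index $-3$ generators at $(y-1,1-y)$ and $(y,-y)$, and the symmetry of the single-variable $\widehat{\HFK}(L)$ (whose total Alexander distribution you already know equals that of $T(2,6)$) pins $y=\tfrac12$; the $\widehat{\HFL}$ symmetry then fills in the rest. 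If you run this bookkeeping explicitly your proof is complete and agrees with the paper's; without it, the heart of the theorem is missing.
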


Since $\widehat{\HFK}(L)$ admits a spectral sequence to $\widehat{\HF}(\#^{n-1}S^1\times S^2)$, where $n$ is the number of components of $L$, $L$ has at most two components. Indeed, as $\rank(\widehat{\HFK}(L))$ is even, $L$ has exactly two components. Since knot Floer homology detects the linking number of two component links, the linking number is three.

From here the proof of Theorem~\ref{HFKT26} amounts to an algebraic argument showing that $\widehat{\HFL}(L)\cong\widehat{\HFK}(L)$, and applying Theorem~\ref{T(2,2n)}.

For reference, after renormalizing the Maslov gradings to agree with the link Floer homology, the knot Floer homology of $T(2,6)$  is: rank one in $(M,A)$ gradings $(0,3)$ and $(-6  , -3)$; rank two in $(M,A)$ gradings $(-1,2)$, $(-2,1)$, $(-3,0)$, $(-4, -1)$, and $(-5 , -2)$; and rank zero in all other bi-gradings.

 \begin{proof}[Proof of Theorem~\ref{HFKT26}]
 
 Suppose that $L$ has the same knot Floer homology as $T(2,6)$. As in the proof that knot Floer homology detects $T(2,4)$ we have that $A_1+A_2+\lk(L_1,L_2)=3$, where each $A_i$ is the Alexander gradings of the Maslov index $0$ generator in $\widehat{\HFK}(L_i)$. Thus $A_1+A_2=0$, and Lemma~\ref{positiveAlex} implies that $A_1=A_2=0$.
 
 We now show that $L$ has the same link Floer Homology as $T(2,6)$, so it follows from Theorem~\ref{T(2,2n)} that $L$ is isotopic to $T(2,6)$.
 
 Since the linking number is $3$ and the Maslov index $0$ generator sits in Alexander grading $0$ in the knot Floer homology of each component, it follows that in $\widehat{\HFL}(L)$ the Maslov index $0$ generator sits in Alexander bi-grading $(\frac{3}{2},\frac{3}{2})$. The Maslov index $-1$ generators in $\widehat{\HFK}(L)$ must be in bi-Alexander gradings $(\frac{1}{2},\frac{3}{2}), (\frac{3}{2},\frac{1}{2})$. There is also Maslov index $-2$ generator in Alexander grading $(\frac{1}{2},\frac{1}{2})$. Consider the remaining Maslov index $-2$ generator. Suppose it sits in Alexander grading $(y,1-y)$. Observe that there must be Maslov index $-3$ generators sitting in Alexander grading $(y-1,1-y)$, $(y,-y)$. The symmetry property of $\widehat{\HFK}(L)$ then implies that $y=\frac{1}{2}$. The symmetry properties of $\widehat{\HFL}$ then show that $\widehat{\HFL}(L)\cong\widehat{\HFL}(T(2,6))$, and so by Theorem~\ref{T(2,2n)} we have that $L$ is isotopic to $T(2,6)$. as desired.

 \end{proof}

\end{section}

\begin{section}{Link Floer homology detects $T(n,n)$}\label{section T(n,n)}

In the previous section we showed that link Floer homology detects the $T(2,2n)$ torus links, motivated by detection results for $T(2,2)$, $T(2,4)$, and $T(2,6)$. The torus link $T(2,2)$ can also be viewed as one of the simplest links in the family of $T(n,n)$ torus links. In this section we show that link Floer homology detects the links $T(n,n)$. We use a characterization of $T(n+1,n+1)$ as an $(n)$-braid for $T(n, n)$ union the braid axis.

In~\cite{licata2012heegaard}, J.~Licata computes the link Floer homology of the links $T(n,n)$, aside from the Maslov gradings of certain generators when $n>6$. She conjectures a complete result. We prove that link Floer homology detects $T(n,n)$ using her computation. It follows from this that there are many graded vector spaces that do not arise as the link Floer homology of any link.

We will be interested in multi-graded vector spaces $B_n$ exhibiting the following four properties:

\begin{enumerate}
    \item There is a unique Maslov grading $0$ generator.
    \item The multi-Alexander grading of the Maslov grading $0$ generator is $(\frac{n-1}{2},\frac{n-1}{2},\dots,\frac{n-1}{2})$.
    \item $B_n$ has support contained only in multi-Alexander gradings $(A_1,A_2,\dots,A_n)$ satisfying $A_i\leq \frac{n-1}{2}$ for all $i$.
    \item $B_n$ has rank $2^{n-1}$ in $A_i$ grading $\frac{n-1}{2}$.
\end{enumerate}

In particular $\widehat{\HFL}(T(n,n))$ satisfies the above properties. Observe that if $L$ is any link whose link Floer homology satisfies all of the above conditions then $L$ is not a split link, so each component $L_i$ of $L$ is a braid axis for $L-L_i$.

\begin{theorem}\label{HFL:T(n,n)}
If $\widehat{\HFL}(L)\cong \widehat{\HFL}(T(n,n))$, then $L$ is isotopic to $T(n,n)$.
\end{theorem}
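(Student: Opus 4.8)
The plan is to induct on $n$, using the characterization of $T(n,n)$ as the union of $T(n-1,n-1)$ with a braid axis. The base case $n=2$ is $T(2,2)$, the Hopf link, which is known to be detected by knot (hence link) Floer homology, or can be handled directly by the $T(2,2n)$ detection theorem (Theorem~\ref{T(2,2n)}). So suppose $\widehat{\HFL}(L)\cong\widehat{\HFL}(T(n,n))$ with $n\geq 3$, and assume link Floer homology detects $T(n-1,n-1)$. First I would record the structural consequences already noted in the excerpt: since $\widehat{\HFL}(L)$ satisfies properties (1)--(4), $L$ is a non-split $n$-component link, and for each component $L_i$ the complement link $L-L_i$ is a braid (in $n-1$ strands, once the braid index is pinned down) with $L_i$ as its axis. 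I would also extract the linking numbers: from the multi-Alexander grading of the Maslov~$0$ generator and property (2), each pairwise linking number $\lk(L_i,L_j)$ should be forced to equal $1$, matching $T(n,n)$, and the total linking data should match as well.

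The heart of the argument is to run the sublink spectral sequence from $\widehat{\HFL}(L)$ to $\widehat{\HFL}(L-L_i)\otimes V$ (with its Alexander grading shift by $\tfrac12\lk(L_j,L_i)$ on the $j$th grading), and to show that the $E_1$-page data together with properties (1)--(4) force $\widehat{\HFL}(L-L_i)\cong\widehat{\HFL}(T(n-1,n-1))$ on the nose. Concretely: the unique Maslov~$0$ generator of $\widehat{\HFL}(L)$ survives the spectral sequence (it must, since it survives all the way to $\widehat{\HF}(S^3)\otimes V^{n-1}$), so $\widehat{\HFL}(L-L_i)$ has a unique Maslov~$0$ generator, and by the grading shift its multi-Alexander grading is $(\tfrac{n-2}{2},\dots,\tfrac{n-2}{2})$, giving properties (1) and (2) for $L-L_i$. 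Property (3) for $L-L_i$ follows because the spectral sequence only decreases (or fixes) Alexander gradings appropriately after accounting for the shift, combined with the symmetry of $\widehat{\HFL}$. Property (4) — that $\widehat{\HFL}(L-L_i)$ has rank $2^{n-2}$ in top grading $\tfrac{n-2}{2}$ — should come from comparing the rank of $\widehat{\HFL}(L)$ in top $A_i$-grading (which is $2^{n-1}=2\cdot 2^{n-2}$) with the rank of the $V$-tensor factor, using that the spectral sequence differentials cannot kill anything in the extremal Alexander grading slice. Then by the inductive hypothesis $L-L_i\simeq T(n-1,n-1)$; since $L_i$ is a braid axis for it, $L$ is $T(n-1,n-1)$ union a braid axis. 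Finally I would invoke the classification of braid axes: the only ways to realize $T(n-1,n-1)$ as an $(n-1)$-braid closure, and correspondingly the possible axes, must be enumerated, and one checks that the resulting links are distinguished by their link Floer homology — Licata's computation of $\widehat{\HFL}(T(n,n))$ pins down which axis gives $T(n,n)$ and rules out the others (here is where one uses that link Floer homology detects the Thurston norm of the complement, to cut down the possibilities for the axis).

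I expect the main obstacle to be the last step: showing that among all links obtained as $T(n-1,n-1)$ together with some braid axis in the complement, only $T(n,n)$ has the link Floer homology of $T(n,n)$. Markov's theorem / the structure of the symmetry group of torus links gives candidate axes, but verifying that the Thurston norm of the complement (which link Floer homology detects) together with the graded ranks in Licata's table single out the genuine torus axis requires care — and Licata's computation leaves the Maslov gradings of some generators undetermined for $n>6$, so I would need to argue that the detection goes through using only the part of her computation that is unconditional (the multi-Alexander-graded ranks and the symmetry), or supply the missing Maslov gradings for the comparison at hand. A secondary subtlety is making sure the braid index is exactly $n-1$: a priori a component $L_i$ could be an axis for a braid on fewer strands presenting $L-L_i$ reducibly, so I would use property (4) (rank $2^{n-1}$ in the extremal $A_i$-slice, which by \cite[Proposition~1]{G} is exactly the "braid axis" condition) to nail down the strand count.
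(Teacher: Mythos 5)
Your strategy---inducting on $n$ by showing each sublink $L-L_i$ has the link Floer homology of $T(n-1,n-1)$ and then invoking the inductive detection hypothesis---is genuinely different from the paper's, and it has a gap at its central step. The sublink spectral sequence from $\widehat{\HFL}(L)$ to $\widehat{\HFL}(L-L_i)\otimes V$ runs the wrong way for your purposes: knowing the $E_1$ page does not determine the $E_\infty$ page, so you cannot conclude that $\widehat{\HFL}(L-L_i)\cong\widehat{\HFL}(T(n-1,n-1))$ ``on the nose.'' At best you get rank inequalities and constraints on where surviving generators sit, and even if you could establish properties (1)--(4) for $\widehat{\HFL}(L-L_i)$, that is far from the full multigraded isomorphism the inductive hypothesis requires as input, so the induction cannot close. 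A second gap: you assert the pairwise linking numbers are ``forced to equal $1$'' by the multi-Alexander grading of the Maslov $0$ generator. That grading only yields the inequality $\frac{n-1}{2}\geq g(L_i)+\sum_{j\neq i}\frac{\lk(L_i,L_j)}{2}$, which is compatible with, say, linking numbers $(3,1,-1)$ when $n=4$. You first need to rule out negative linking numbers; this is the content of the paper's key lemma, proved by noting that the unique Maslov $0$ generator survives every forgetful spectral sequence and that the $A_i$ grading on $\widehat{\HFL}(L-L_j)\otimes V$ must be symmetric about a non-negative number, forcing each shift $\frac{1}{2}\lk(L_i,L_j)$ to be non-negative.

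The paper's route avoids both problems by extracting only topological data from $\widehat{\HFL}(L)$: non-negativity, hence strict positivity (since $L$ is non-split and braided about each component), of all linking numbers, which combined with the displayed inequality forces $g(L_i)=0$ and $\lk(L_i,L_j)=1$ for all $i\neq j$. The induction is then purely topological: the only $n$-component link in which every component is an unknotted braid axis for the rest and all pairwise linking numbers equal $1$ is $T(n,n)$. This sidesteps your final worry about Licata's undetermined Maslov gradings and the enumeration of braid axes, since no comparison of full link Floer homologies of candidate links is ever needed. To salvage your approach you would need to replace the target statement ``$\widehat{\HFL}(L-L_i)\cong\widehat{\HFL}(T(n-1,n-1))$'' with a topological characterization of $L-L_i$ that is actually visible through the spectral sequence---which is essentially what the paper does.
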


The main ingredient of this proof is a result stating that, under certain circumstances, if the link Floer homology of a link has certain algebraic properties then the linking numbers of certain components with the rest of the link are positive.

\begin{lemma}\label{positivelinking}
     Let $L$ be a link with components $L_i$ for $1\leq i\leq n$. Suppose that $\widehat{\HFL}(L)$ has a unique generator of Maslov index $0$ with $A_i$ grading $x\geq0$. Suppose $\widehat{\HFL}(L)$ is supported in $A_i$ gradings at most $x$. Then $\lk(L_i,L_j)\geq 0$ for all $j$.
\end{lemma}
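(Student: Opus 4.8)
The statement is about extracting linking-number positivity from the algebraic structure of $\widehat{\HFL}(L)$, so I would work with the spectral sequences relating a link to its sublinks, which were recalled in Section~\ref{Section:Preliminaries}. The key object to track is the unique Maslov-index-$0$ generator, call it $\theta_0$; as in the proofs of Lemma~\ref{positiveAlex} and Lemma~\ref{HFKT24unknots}, this generator must survive every spectral sequence down to $\widehat{\HF}(S^3)\otimes V^{n-1}$, since that group is one-dimensional in top Maslov grading and $\theta_0$ is the only class that can map to it. So $\theta_0$ persists, in particular, through the spectral sequence from $\widehat{\HFL}(L)$ to $\widehat{\HFL}(L_i \cup L_j)\otimes V^{n-2}$ for any pair of indices $i,j$. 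This reduces the problem to the two-component case.

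**The two-component reduction.** After passing to the sublink $L'=L_i\cup L_j$, I would argue that the hypotheses are inherited: $\widehat{\HFL}(L')$ still has a surviving Maslov-$0$ generator (the image of $\theta_0$, up to the $V^{n-2}$ factor which only contributes in Maslov gradings $\le 0$), and its $A_i$-grading is still the maximal one, namely $x$ shifted by the appropriate $\tfrac12\lk$ terms coming from the spectral sequence grading shift. Here I must be careful: the spectral sequence from $\widehat{\HFL}(L)$ to $\widehat{\HFL}(L-L_k)\otimes V$ shifts the surviving $A_i$-gradings by $\tfrac12\lk(L_i,L_k)$, so the statement "$\widehat{\HFL}(L)$ supported in $A_i$ gradings $\le x$" must be used together with these shifts to pin down where the top generator of $\widehat{\HFL}(L')$ lives. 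For a two-component link $L'=L_i\cup L_j$ the Alexander grading $A_i$ of any generator satisfies $2A_i + \lk(L_i,L_j)\in 2\Z$, and the top $A_i$-grading of a nonsplit two-component link is $\tfrac12\lk(L_i,L_j) + (\text{something}\ge\text{genus-related})$; more usefully, the bottom of the $A_i$-support is $-(\text{top})$ by the symmetry of $\widehat{\HFL}$. The plan is to show that if $\lk(L_i,L_j)<0$, then the symmetry property of link Floer homology — pairing grading $(m,A_i,A_j)$ with $(m-2A_i-2A_j,-A_i,-A_j)$ — forces a generator of strictly positive Maslov grading, contradicting that $\theta_0$ is the unique Maslov-$0$ generator and $\widehat{\HFL}(L)$ is supported in Maslov gradings $\le 0$ (which follows from hypotheses: a unique Maslov-$0$ generator sitting at the top $A_i$ grading, combined with the symmetry, confines support to nonpositive Maslov degrees).

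**The core computation.** Concretely, in the two-component sublink, the image $\bar\theta_0$ of $\theta_0$ sits in bi-Alexander grading $(x + \tfrac12\sum_{k\ne i,j}\lk(L_i,L_k),\, x + \tfrac12\sum_{k\ne i,j}\lk(L_j,L_k))$ — wait, I should instead just work directly with $L'$ itself and the numbers $\ell_i=\lk(L_i, L'-L_i)$, etc. The point is: applying the sublink spectral sequence $\widehat{\HFL}(L')\to \widehat{\HFL}(L_i)\otimes V = V\otimes V$ (since $L_i$ is an unknot or at least $\widehat{\HF}$-like in the relevant range — actually I only know $\widehat{\HFL}(L_i)$ survives to $\widehat{\HF}(S^3)\otimes V$), the surviving Maslov-$0$ class lands in $A_i$-grading $0$ after being shifted down by $\tfrac12\lk(L_i,L_j)$. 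So $\bar\theta_0$ has $A_i$-grading exactly $\tfrac12\lk(L_i,L_j)$ plus a nonnegative correction — and since $\bar\theta_0$ has $A_i$-grading $\le x$ with $x\ge 0$ and sits at the top, running the symmetry argument shows a negative linking number would push a generator above Maslov $0$. I expect the main obstacle to be bookkeeping the grading shifts correctly when $n>2$: the chain of shifts $\tfrac12\lk(L_i,L_k)$ for $k\ne j$ could itself be negative, so I cannot naively say the top $A_i$-grading of $\widehat{\HFL}(L')$ is $\le x$; I would need to either induct on the number of components (removing one component at a time, each removal requiring the corresponding linking number to already be controlled) or argue more cleverly that it suffices to prove the result for all pairs simultaneously by first reducing to two components via a single "batch" spectral sequence $\widehat{\HFL}(L)\to\widehat{\HFL}(L_i\cup L_j)\otimes V^{n-2}$ and carefully identifying where the top generator of the target lies using only that $\theta_0$ survives and the global $A_i\le x$ bound. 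That identification — showing the surviving class is genuinely in the top $A_i$-grading of the sublink, not some intermediate grading — is the crux, and I would handle it by noting that any generator of $\widehat{\HFL}(L_i\cup L_j)\otimes V^{n-2}$ above the image of $\theta_0$ in $A_i$-grading would have to be hit (in the spectral sequence, running backwards) or would contradict the $A_i\le x$ support of $\widehat{\HFL}(L)$ after accounting for the $\ge$ grading shifts — and here the sign of those shifts is exactly what I am trying to control, so the honest route is an induction on $n$.
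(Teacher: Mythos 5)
Your opening step --- that the unique Maslov-grading-$0$ generator $\theta_0$ must survive every spectral sequence between $\widehat{\HFL}(L)$ and $\widehat{\HF}(S^3)\otimes V^{n-1}$ --- is correct and is also how the paper begins. The gap comes immediately after: you try to reduce to the two-component sublink $L_i\cup L_j$, which forces you to track $A_i$-grading shifts by $\tfrac12\lk(L_i,L_k)$ for every $k\neq i,j$, and, as you yourself concede, the signs of those shifts are exactly what the lemma is supposed to establish. You then defer to ``an induction on $n$'' without specifying the inductive statement, and it is not clear the hypotheses are inherited in a usable form: after removing $L_k$, the surviving Maslov-$0$ class does sit in the top $A_i$-grading of $\widehat{\HFL}(L-L_k)\otimes V$, but that top grading is $x-\tfrac12\lk(L_i,L_k)$, which you cannot yet bound below. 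Two side assertions are also unjustified: that the hypotheses confine $\widehat{\HFL}(L)$ to nonpositive Maslov gradings (the symmetry pairs $(m,A_1,\dots,A_n)$ with $(m-2\sum_k A_k,-A_1,\dots,-A_n)$ and by itself rules out nothing in positive Maslov degree), and, in passing, that $L_i$ is an unknot.

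The missing idea is that no reduction to two components is needed: for each pair $(i,j)$, remove only the single component $L_j$. The $E_\infty$ page is $\widehat{\HFL}(L-L_j)\otimes V$ with its $A_i$-grading shifted by $s=\tfrac12\lk(L_i,L_j)$. Since this page is a subquotient of $\widehat{\HFL}(L)$ containing the class of $\theta_0$, its shifted $A_i$-support has top equal to $x$ and bottom at least $-x$ (the lower bound coming from the symmetry of $\widehat{\HFL}(L)$ itself). Hence the unshifted $A_i$-support of $\widehat{\HFL}(L-L_j)\otimes V$ has top $x-s$ and bottom at least $-x-s$; symmetry of $\widehat{\HFL}(L-L_j)$ about $A_i=0$ forces $x-s\le x+s$, i.e.\ $s\ge 0$ and $\lk(L_i,L_j)\ge 0$. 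This is the paper's argument: it involves only the one linking number being controlled, requires no induction, and sidesteps every difficulty you flagged.
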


\begin{proof}[Proof of Lemma~\ref{positivelinking}]

Let $\theta_0$ denote the unique Maslov index $0$ generator. The vector space $\widehat{\HF}(S^3) \otimes V^{n-1}$ is non-zero in Maslov grading $0$ so all other intermediate vector spaces with spectral sequences fitting between $\widehat{\HFL}(L)$ and $\widehat{\HF}(S^3) \otimes V^{n-1}$ must also be non-zero in this Maslov grading. Because $\theta_0$ is the only generator in this Maslov grading, it must survive in every such spectral sequence.

Consider the spectral sequence to $\widehat{\HFL}(L - L_j)\otimes V$ obtained by forgetting the component $L_j$. The $A_i$ grading on $\widehat{\HFL}(L - L_j)\otimes V$ will be shifted by $\frac{ \lk(L_i,L_j)}{2}$, we show that this shift must be non-negative. 

Because $\theta_0$ survives this spectral sequence, $\widehat{\HFL}(L - L_j)\otimes V$ will have top $A_i$ grading $\frac{n-1}{2}$. Considering the $A_i$ grading on $\widehat{\HFL}(L)$ we see that $\widehat{\HFL}(L - L_j)\otimes V$ will have bottom $A_i$ grading no smaller than $\frac{-n+1}{2}$. Since the $A_i$ grading on $\widehat{\HFL}(L - L_j)\otimes V$ must be symmetric about a non-negative number, the shift applied to the Alexander grading must be non-negative, so $\lk(L_i,L_j)\geq 0$.
\end{proof}

With this result on the non-negativity of linking numbers, we can proceed with the proof of Theorem~\ref{HFL:T(n,n)}. We will proceed by induction, using the characterization of $T(n+1,n+1)$ as the link consisting of the unique $n$-braid for $T(n,n)$ together with the braid axis.

\begin{proof}[Proof of Theorem~\ref{HFL:T(n,n)}]

Suppose that $\widehat{\HFL}(L)\cong \widehat{\HFL}(T(n,n))$. Lemma~\ref{positivelinking} tells us that $\lk(L_i,L_j)\geq 0$ for every distinct $i,j$. Moreover, because $L$ is not split and each component $L_i$ of $L$ is a braid axis for $L-L_i$ we have $\lk(L_i,L_j)\neq 0$. 

The top non-zero $A_i$ grading is $\frac{n-1}{2}$. The relationship between the top non-zero $A_i$ grading and the Seifert genus of  $L_i$ implies that; $$ \dfrac{n-1}{2}\geq g(L_i)+\underset{j\neq i}{\sum}\dfrac{\lk(L_i,L_j)}{2 }. $$

However, because $\lk(L_i,L_j) > 0$ we also have that $$g(L_i)+\underset{j\neq i}{\sum}\dfrac{\lk(L_i,L_j)}{2 } \geq g(L_i) + \frac{n-1}{2},$$ with equality when $\lk(L_i,L_j) = 1$ for all $j$. Combining these inequalities gives that $g(L_i)=0$, and $\lk(L_i,L_j)=1$ for all $i,j$.

 We now know that $L$ is an $n$-component link where each component is an unknot, each component is a braid axis for the rest of the link, and the linking number between any two components is $1$. The torus link $T(n,n)$ is the only $n$ component link satisfying all of these conditions. This can be verified by induction on $n$. Specifically, check explicitly that $T(2,2)$ is the only such $2$-component link, then view $L$ as a braid axis of some $n$-braid representing an $n$ component link satisfying the same properties.
\end{proof}

\end{section}

\begin{section}{Knot Floer homology detects $T(3,3)$}\label{section:L6n1}

In previous sections we showed that for some of the first members of the family of $T(2,2n)$ torus links the link Floer homology detection result can be strengthened to knot Floer homology detection results. In this section we do the same for $T(3,3)$, the third member of the $T(n,n)$ family.

\begin{theorem}\label{L6n1:HFK}
If $\widehat{\HFK}(L)\cong\widehat{\HFK}(T(3,3))$, then $L$ is isotopic to $T(3,3)$.
\end{theorem}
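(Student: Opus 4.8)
The plan is to mimic the strategy used for $T(2,4)$ and $T(2,6)$: first pin down the combinatorics of $L$ (number of components, linking numbers, genera of components), then upgrade the knot Floer homology to the full link Floer homology of $T(3,3)$, and finally invoke Theorem~\ref{HFL:T(n,n)}. First I would record $\widehat{\HFK}(T(3,3))$ explicitly (after renormalizing Maslov gradings to the link Floer convention): its rank, which Maslov gradings are occupied, and in particular the maximal Maslov grading. Since $\widehat{\HFK}(L)$ admits a spectral sequence to $\widehat{\HF}(\#^{n-1}(S^1\times S^2))$, which has a generator in Maslov grading $\tfrac{n-1}{2}$, the top Maslov grading of $\widehat{\HFK}(T(3,3))$ forces $n\leq 3$; the parity of the rank (or of the Maslov gradings, which are $\Z+1$ valued) forces $n=3$. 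Because knot Floer homology detects the linking number of two-component links, applying the sublink spectral sequences to each two-component sublink pins down $\lk(L_i,L_j)$ for all $i,j$.

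Next I would control the genera of the components and confirm they are unknots. Using Lemma~\ref{positiveAlex} (appropriately generalized, or applied componentwise) together with the fact that the unique top-Maslov generator $\theta_0$ must survive every sublink spectral sequence, I would argue that the Maslov grading $0$ generator of each $\widehat{\HFK}(L_i)$ sits in Alexander grading $0$, and then that the bi/tri-Alexander grading of $\theta_0$ in $\widehat{\HFL}(L)$ is determined. The relation between the top nonzero $A_i$ grading and $g(L_i)+\sum_{j\neq i}\tfrac{1}{2}\lk(L_i,L_j)$ — exactly as in the proof of Theorem~\ref{HFL:T(n,n)} — should then force $g(L_i)=0$, so each component is an unknot, and recover that each component is a braid axis for the complementary link (since $L$ is not split, which follows once we know the top $A_i$-rank is $2^{n-1}=4$, matching that of $T(3,3)$).

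The heart of the argument, and the step I expect to be the main obstacle, is the purely algebraic reconstruction of $\widehat{\HFL}(L)$ from $\widehat{\HFK}(L)\cong\widehat{\HFK}(T(3,3))$. For $T(2,4)$ and $T(2,6)$ this was a short finite case-check exploiting the two Alexander-grading symmetry and the diagonal projection; for three components one must place each generator of $\widehat{\HFK}(L)$ at a tri-Alexander grading lying over its single Alexander grading, subject to: (i) the $\Z^3$-symmetry $(m,A_1,A_2,A_3)\mapsto(m-2\sum A_i,-A_1,-A_2,-A_3)$; (ii) evenness of the rank in each fixed $A_i$ grading (from the sublink spectral sequences to $V$); (iii) the constraints coming from the spectral sequences to $\widehat{\HFL}(L-L_i)\otimes V$, whose target is $\widehat{\HFL}(T(2,2))\otimes V$ with a known grading shift; and (iv) the Thurston-norm/genus data already extracted. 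I would show these constraints have a unique solution, namely Licata's computation of $\widehat{\HFL}(T(3,3))$. Once $\widehat{\HFL}(L)\cong\widehat{\HFL}(T(3,3))$, Theorem~\ref{HFL:T(n,n)} finishes the proof. The risk is that the tri-graded bookkeeping is genuinely more delicate than the two-component cases and that conditions (i)--(iv) alone might leave a small ambiguity; if so, I would supplement them with the symmetry of $\widehat{\HFL}$ under reversing the orientation of a single component and with Licata's partial computation (which is complete in the range $n\leq 6$, hence for $n=3$) to rule out the remaining configurations.
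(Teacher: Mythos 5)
Your overall architecture --- reduce to $n=3$, reconstruct $\widehat{\HFL}(L)$ from $\widehat{\HFK}(L)$, and finish with Theorem~\ref{HFL:T(n,n)} --- matches the paper's, but the middle of your argument has genuine gaps. First, two of your inputs are not available at the stage you invoke them. Hoste's result determines the linking number from $\widehat{\HFK}$ only for \emph{two-component} links; for a three-component $L$ the pairwise linking numbers $\lk(L_i,L_j)$ cannot be read off from $\widehat{\HFK}(L)$, and the spectral sequence from $\widehat{\HFL}(L)$ to $\widehat{\HFL}(L-L_k)\otimes V$ does not determine its target, so you can neither ``pin down $\lk(L_i,L_j)$ for all $i,j$'' at the outset nor assume in your constraint (iii) that the target is $\widehat{\HFL}(T(2,2))\otimes V$ --- that the two-component sublinks are Hopf links is part of what must be proved. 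Relatedly, the genus argument you import from Theorem~\ref{HFL:T(n,n)} rests on Lemma~\ref{positivelinking}, i.e.\ on nonnegativity of linking numbers, and in the configurations that actually have to be excluded one has $\lk(L_1,L_3)=-1$.

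Second, and more importantly, the ``unique solution'' you hope for does not exist: the purely algebraic constraints leave the tri-Alexander grading of the Maslov index $0$ generator as $(x,y,3-x-y)$ with (up to permutation) three surviving possibilities $(x,y)\in\{(1,1),(2,1),(1,2)\}$, of which only $(1,1)$ yields $\widehat{\HFL}(T(3,3))$. Your proposed fallbacks (the orientation-reversal symmetry and Licata's computation) do not resolve this, since they constrain what $\widehat{\HFL}(T(3,3))$ is rather than what $\widehat{\HFL}(L)$ could be. The paper excludes $(x,y)=(1,2)$ by a topological argument: a rank count forces each component of $L$ to be an unknot, the Alexander gradings of the Maslov index $0$ generator then give $\lk(L_1,L_2)=3$ and $\lk(L_1,L_3)=-\lk(L_2,L_3)=-1$, braid detection shows $L-L_3$ is a $2$-braid closure in the complement of $L_3$, hence (after reorienting one strand) $T(2,-6)$, and finally $\rank(\widehat{\HFK}(T(2,6))\otimes V)=24>18=\rank(\widehat{\HFK}(L))$ gives a contradiction. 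Some step of this kind is needed to close your argument.
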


To prove we will use various spectral sequence arguments to show that $L$ has the same link Floer homology as $T(3,3)$. The above theorem then follows immediately from Theorem~\ref{HFL:T(n,n)}.

\begin{proof}[Proof of Theorem~\ref{L6n1:HFK}]
Suppose $L$ is an $n$ component link such that $\widehat{\HFK}(L)\cong\widehat{\HFK}(T(3,3))$.

First note that $n\leq 3$ as else $\widehat{\HFK}(L)$ would not admit a spectral sequence to $\widehat{\HF}(\#^{n-1}(S^1\times S^2))$. $n\neq 2$ since the Maslov gradings of $\widehat{\HFK}(L)$ are supported in integer gradings. Moreover, $L$ cannot be a knot since $\rank(\widehat{\HFK})(L)$ is even. Thus $n=3$.
   
 Let $L_1,L_2,L_3$ be the components of $L$. We now seek to determine the structure of $\widehat{\HFL}(L)$.
    
The symmetry of $\widehat{\HFL}(L)$ implies that the unique generator in Maslov grading $-2$ and Alexander grading $0$ sits in multi-Alexander grading $(0,0,0)$. Similarly the symmetry implies that at least one of the Maslov grading $-3$ generators also sits at multi-grading $(0,0,0)$  .

Since the Maslov grading $0$ generator in knot Floer homology is of Alexander grading, $3$, the Maslov grading $0$ generator in Link Floer homology sits in Alexander multi-grading $(x,y,3-x-y)$ for some pair of integers $(x,y)$. In order that the link Floer homology admits the requisite spectral sequences, there are Maslov grading $-1$ generators in multi-Alexander gradings $(x,y,2-x-y)$, $(x-1,y,3-x-y)$, $(x,y-1,3-x-y)$.

Now, observe that each Maslov grading $-1$ generator has at least one distinct Alexander grading from the unique Maslov grading $0$ generator. In order to admit the requisite spectral sequences there must be Maslov grading $-2$ generators with $(A_1,A_2)=(x,y-1),(x-1,y)$, $(A_1,A_3)=(x-1,3-x-y),(x,2-x-y)$ and $(A_2,A_3)=(y-1,3-x-y),(y,2-x-y)$. A direct computation shows that at most one of these corresponds to the generator in multi-grading $(0,0,0)$. Thus there are Maslov index $-2$ generators in Alexander gradings $(x-1,y-1,3-x-y)$, $(x,y-1,2-x-y)$ and $(x-1,y,2-x-y)$.

By a similar argument, we can see that that there is a Maslov index $-3$ generator in multi-Alexander grading $(x-1,y-1,2-x-y)$. If $(x,y)\neq (1,1)$ this determines the entire link Floer complex. If $x=y=1$ then the remaining Maslov index $-3$ generators must be of multi-Alexander grading $(0,0,0)$ to insure that each $(A_i,A_j)$ grading is of even rank, so again the entire complex is determined.
    
    Consider the Maslov index $-3$ generator in multi-Alexander grading $(0,0,0)$, $\theta_{-3}$. Since $\theta_{-3}$ does not persist under the spectral sequence to $\widehat{\HFL}(L_i)\otimes V^{\otimes 2}$ for any $i$, there must be a Maslov index $-2$ generator in non-zero Alexander grading with each Alexander grading at least $0$, or a Maslov index $-4$ generator with each Alexander grading at most $0$. Observe that these two conditions are equivalent by the symmetry of the complex. Thus we have that $x-1,y-1,3-x-y\geq 0$, $x,y-1,2-x-y\geq 0$, or $x-1,y-1,2-x-y\geq 0$. By permuting the components we may take $x-1,y-1,3-x-y\geq 0$. There are only three solutions $(x,y)=(1,1)$, $(x,y)=(2,1)$, or $(x,y)=(1,2)$. If $(x,y)=(1,1)$ then $\widehat{\HFL}(L)\cong\widehat{\HFL}(T(3,3))$.
    
    We complete the proof by excluding the cases $(x,y)=(2,1)$, $(x,y)=(1,2)$. After permuting components, we may take $x=1,y=2$ without loss of generality.
    
     Since $\rank(\widehat{\HFK}(L))=18$,  $\rank(\widehat{\HFK}(L_i))\leq \dfrac{9}{2}$. It follows that each component is an unknot or a trefoil. Observe that if a component is a trefoil then it must be $T(2,3)$, as there are no positive Maslov index generators in $\widehat{\HFK}(L)$. Indeed there can be no $T(2,3)$ component as this would require there to be an Alexander grading two less than an Alexandar grading of the Maslov index $0$ generator containing a summand $\F_{-2}\oplus\F_{-3}^2\oplus\F_{-4}$, which does not occur. Thus each component is an unknot. From here we can compute the linking numbers from the Alexander gradings of the Maslov grading $0$ generator. We find $\lk(L_1,L_3)=-1=-\lk(L_2,L_3)$, and $\lk(L_1,L_2)=3$. Since $L$ is not a split link, we see that $L-L_3$ is a $2$-braid in the complement of $L_3$, when one of the two strands is given the opposite orientation. Each of $L_1$ and $L_2$ are unknots and $\lk(L_1,L_2)=3$ so after changing the orientation of $L_1$, $L-L_3$ is the 2-braid $T(2,-6)$. However, the rank of knot Floer homology is invariant under changing orientations and $\rank(\widehat{\HFK}(T(2,6)\otimes V))=24$, so $T(2,6)$ cannot be a sublink of $L$ and $(x,y) \not = (1,2)$.
\end{proof}

\end{section}

\begin{section}{Knot Floer homology detects $L7n1$}\label{section:L7n1}
	
We have now shown that knot Floer homology detects a number of the low crossing number links that Khovanov homology is known to detect. In this section we continue this task, showing that knot Floer homology detects the link $L7n1$.

\begin{theorem}\label{HFK:L7n1}
If $\widehat{\HFK}(L)\cong \widehat{\HFK}(L7n1)$, then $L$ is isotopic to $L7n1$.
\end{theorem}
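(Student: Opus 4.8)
The plan is to follow the template of the preceding sections: recover enough of $\widehat{\HFL}(L)$ from $\widehat{\HFK}(L)$, and enough of the topology of $S^3\setminus L$ from the Floer invariants, to recognise $L$ and its complement.

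First I would pin down the number of components. The parity of $\rank\widehat{\HFK}(L)$ shows $L$ is not a knot; the inequality $n\le 1+2\max\{m:\widehat{\HFK}_m(L)\neq 0\}$ coming from the spectral sequence to $\widehat{\HF}(\#^{n-1}(S^1\times S^2))$ bounds $n$ from above; and the distinction between $\Z$- and $(\Z+\tfrac12)$-valued Maslov support fixes the parity of $n$. Together these force $L$ to have the same number of components as $L7n1$. Since knot Floer homology detects the linking number of a two-component link, and for more components the pairwise sublink spectral sequences together with the Alexander grading symmetry pin down each $\lk(L_i,L_j)$, the linking matrix of $L$ agrees with that of $L7n1$.

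Next I would identify the components. The inputs are the spectral sequences $\widehat{\HFL}(L)\to\widehat{\HFK}(L_i)\otimes V^{|L|-1}$, the rank bound $\rank\widehat{\HFK}(L_i)\le \rank\widehat{\HFK}(L)/2^{|L|-1}$, and Lemma~\ref{positiveAlex} together with the Alexander grading symmetry; these leave only a short list of possibilities for each $\widehat{\HFK}(L_i)$, and any unwanted candidate is excluded as the $T(2,3)$ components were in the proof of Theorem~\ref{L6n1:HFK}, by comparing ranks in fixed bi-gradings. (For $L7n1$ I expect this to yield unknotted components.) With the components known, I would use that link Floer homology detects the Thurston norm of $S^3\setminus L$ to read off the Seifert-genus data from the top $A_i$ gradings, exactly as in the proof of Theorem~\ref{HFL:T(n,n)}; combined with the linking numbers this fixes the Euler characteristics of the Thurston-norm-minimising surfaces. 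If moreover the top $A_i$-graded part of $\widehat{\HFL}(L)$ has rank $2^{|L|-1}$, then since $L$ is not split \cite[Proposition~1]{G} shows $L-L_i$ is a braid in the complement of the unknot $L_i$, and the linking data and genus pin this braid down up to conjugacy, hence $L-L_i$ up to isotopy; if the relevant rank is $1$ instead, fiberedness detection for the knotification of $L$ plays the same role.

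The last step is to upgrade this package -- the homeomorphism type of $S^3\setminus L$, the knot types of the components, the linking matrix, the braid-axis structure, and the genus data -- to the conclusion $L\simeq L7n1$. For this I would invoke the classification of the (Seifert fibered) complement of $L7n1$, using the Floer-homological gradings, which see orientations, to eliminate the remaining ambiguity -- in particular the mirror and the orientation-reversed competitors, to which $\rank\widehat{\HFK}$ is blind -- via the symmetry-of-Seifert-fibered-links argument; for a link this small one could instead argue directly that the complementary braid is exchange-reducible, so its axis is unique up to isotopy. I expect this identification to be the main obstacle: the Floer input is purely homological and several links can share it a priori, so the real content is a topological uniqueness statement about the complement, and assembling the relevant classification together with the grading-based exclusion of mirrors and reversed orientations (compare the exclusion of the sublink $T(2,6)$ in the proof of Theorem~\ref{L6n1:HFK}) is the delicate part.
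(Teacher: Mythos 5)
Your overall template---recover $\widehat{\HFL}(L)$ from $\widehat{\HFK}(L)$, identify the components and the braid-axis structure, then recognise the link---is the one the paper uses. But there is a concrete error in the middle that would derail the endgame: $L7n1$ is \emph{not} a link with unknotted components. It is the closure of the $2$-braid $\sigma_1^3$ (i.e.\ $T(2,3)$) together with its braid axis, so one component is an unknot and the other is a trefoil. The paper's proof of Lemma~\ref{L7n1:HFL} in fact \emph{rules out} the case that both components are unknots: the Alexander-grading shifts in the two spectral sequences $\widehat{\HFL}(L)\to\widehat{\HFL}(L_i)\otimes V$ would then force $\lk(L_1,L_2)=4$ and $\lk(L_2,L_1)=2$ simultaneously, a contradiction. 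Your exclusion machinery (rank bounds, Lemma~\ref{positiveAlex}, comparing ranks in fixed bi-gradings) is the right toolkit and would, if run honestly, produce the correct answer---one unknot $L_1$ and one $T(2,3)$ component $L_2$ with $\lk=2$---but as written your proposal heads into the final step with the wrong component data, and everything downstream (Thurston norm bookkeeping, the identification of the complement) would be computed for the wrong link.

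Relatedly, you misjudge where the difficulty sits. Once one knows that $L_1$ is an unknotted braid axis, that $L_2=L-L_1$ is a $2$-braid closure in its complement (rank $2^{n-1}=2$ in the top $A_1$ grading, via \cite[Proposition~1]{G}), that $L_2$ is $T(2,3)$ and not its mirror (no positive Maslov gradings), and that $\lk(L_1,L_2)=2$, the identification is immediate: the only $2$-braid whose closure is the right-handed trefoil is $\sigma_1^3$ up to conjugacy, so $L$ is $\widehat{\sigma_1^3}$ union its axis, i.e.\ $L7n1$. No classification of symmetries of the Seifert fibered complement, and no exchange-reducibility, is needed; that heavier machinery is what the paper deploys in Section~\ref{section:Khovanov} for the annular Khovanov applications, where one must recover an annular link from a branched double cover, not here. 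So the step you flag as ``the main obstacle'' is a one-line observation, while the step you wave through---correctly sorting out which component is knotted---is where the actual case analysis lives.
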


Our proof relies on the observation that $L7n1$ can be realized as a 2-braid representing $T(2,3)$ together with the braid axis.

First note that it follows from the fact that $\widehat{\HFK}(L)$ admits a spectral sequence to $\widehat{\HF}(\#^{n-1}(S^1\times S^2))$ -- where $n$ is the number of components of $L$ -- and the fact that the knot Floer homology of a knot is of odd rank that $L$ is a two component link. Since knot Floer homology detects the linking number of two component knots, it follows that the linking number of $L$ is two. From here we break up the proof of Theorem~\ref{HFK:L7n1} into the following lemmas;

\begin{lemma}\label{L7n1:HFK,HFL}
			Suppose $L$ is a two component link such that $\widehat{\HFK}(L7n1)\cong\widehat{\HFK}(L)$. Then $\widehat{\HFL}(L)\cong\widehat{\HFL}(L7n1)$.
		\end{lemma}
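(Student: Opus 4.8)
The plan is to mimic the strategy used for $T(2,4)$ and $T(2,6)$: pin down the multi-Alexander gradings of all generators of $\widehat{\HFL}(L)$ using the spectral sequences to the sublinks, the symmetry of link Floer homology, and the parity of ranks in each Alexander grading. First I would record $\widehat{\HFK}(L7n1)$ explicitly (renormalized to the link Floer Maslov grading) and note that, as established above, $L$ has two components $L_1, L_2$ with $\lk(L_1,L_2) = 2$. As in the proof of Lemma~\ref{HFKT24unknots}, the unique top Maslov-grading generator $\theta$ of $\widehat{\HFK}(L)$ survives the spectral sequences $\widehat{\HFL}(L) \Rightarrow \widehat{\HFK}(L_i) \otimes V$, so its bi-Alexander grading is $(A_1 + 1, A_2 + 1)$ where $A_i$ is the Alexander grading of the top Maslov generator of $\widehat{\HFK}(L_i)$; since the total Alexander grading of $\widehat{\HFK}$ is the sum of the $A_i$ plus $\lk$, comparing with the top Alexander grading of $\widehat{\HFK}(L7n1)$ forces $A_1 + A_2$ to a specific value, and then Lemma~\ref{positiveAlex} (applicable because $L7n1$, hence $L$, has link Floer homology supported in nonpositive Maslov gradings with a unique Maslov-$0$ generator) pins down $A_1, A_2$ individually, hence the bi-grading of $\theta$.

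Next I would propagate this information downward. Each time a generator of $\widehat{\HFL}(L)$ in bi-grading $(a,b)$ and Maslov grading $m$ has an Alexander grading strictly larger than that of some generator it could cancel against, the spectral sequences to $\widehat{\HFK}(L_1)\otimes V$ and $\widehat{\HFK}(L_2)\otimes V$ force the existence of companion generators in bi-gradings $(a-1,b)$ and $(a,b-1)$ in Maslov grading $m-1$ (unless the relevant rank is already accounted for). Combining this with the symmetry $(m, a, b) \leftrightarrow (m - 2(a+b), -a, -b)$ and the requirement that the rank of $\widehat{\HFL}(L)$ in each fixed $A_1$-grading and each fixed $A_2$-grading be even (since it collapses to $V$ under the forgetful spectral sequence), I expect the bi-gradings of all generators to be determined up to finitely many cases, exactly as in the $T(2,4)$ and $T(2,6)$ arguments. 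Any ambiguous "free" generators left over must be placed in pairs at a common bi-grading to preserve even ranks, and symmetry should force that bi-grading; comparing the resulting distribution with $\widehat{\HFL}(L7n1)$ finishes the identification.

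The main obstacle I anticipate is that $L7n1$ is more complicated than $T(2,4)$ or $T(2,6)$ — it is not a $2$-braid closure but rather $T(2,3)$ together with its braid axis, so $\widehat{\HFK}(L7n1)$ has more generators and the Maslov-grading profile is less rigid, meaning there may genuinely be several bi-grading distributions consistent with the spectral-sequence and symmetry constraints alone. To rule these out I would bring in the extra structural inputs flagged in Section~\ref{Section:Preliminaries}: the Thurston-norm/genus detection of link Floer homology, and in particular the criterion that the rank in the maximal nonzero Alexander grading of a component equals $2^{n-1} = 2$ exactly when the complement of that component fibers as a braid — which is the case for $L7n1$ and constrains the top-grading behavior. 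I would also use, where needed, that $\widehat{\HFK}$ of a sublink cannot have a negative rank or an impossible Maslov profile (e.g. ruling out a nontrivial knot component by the argument used in the $T(3,3)$ proof). The expectation is that after these refinements only the distribution matching $\widehat{\HFL}(L7n1)$ survives, giving $\widehat{\HFL}(L)\cong\widehat{\HFL}(L7n1)$.
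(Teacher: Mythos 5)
Your overall strategy is the paper's: pin down the bi-Alexander gradings of all ten generators using the spectral sequences to $\widehat{\HFK}(L_i)\otimes V$, the symmetry of link Floer homology, and evenness of the rank in each fixed $A_i$-grading. The one place you genuinely diverge is in locating the Maslov grading $0$ generator $\theta_0$: you apply Lemma~\ref{positiveAlex} at the outset, as in the $T(2,4)$ and $T(2,6)$ arguments, whereas the paper writes the bi-grading as $(\tfrac{3}{2}+x,\tfrac{3}{2}-x)$, determines the whole complex in terms of $x$, and only then constrains $x$ by requiring that the Maslov grading $-3$ generator at $(0,0)$ die in the spectral sequence to $\widehat{\HF}(S^3)\otimes V$ (forcing $|x|\le\tfrac{1}{2}$) and that all $A_1$-gradings lie in a single coset of $\Z$ (forcing $x\ne 0$). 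Your route is legitimate and arguably shorter, but one claim is off: since here $A_1+A_2=1$ rather than $0$, Lemma~\ref{positiveAlex} does \emph{not} pin down $A_1,A_2$ individually --- it leaves the two options $(1,0)$ and $(0,1)$. These differ only by relabelling the components and both yield $\widehat{\HFL}(L7n1)$, which is exactly how the paper disposes of its two surviving values $x=\pm\tfrac{1}{2}$, so you should say this explicitly rather than assert uniqueness. Finally, the extra inputs you hedge with (Thurston norm, the braid-detection criterion, ruling out knotted components) are unnecessary for this lemma: once $\theta_0$ is placed, the two Maslov $-1$ generators forced by the spectral sequences, their symmetric partners, the two generators pinned at $(0,0)$ by symmetry, and the parity constraint on the remaining Maslov $-2$/$-4$ pair determine the complex completely.
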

		
		\begin{lemma}\label{L7n1:HFL}
		     Suppose $L$ satisfies $\widehat{\HFL}(L)\cong\widehat{\HFL}(L7n1)$. Then $L$ is isotopic to $L7n1$. 
		\end{lemma}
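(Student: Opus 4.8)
The plan is to exploit the structural characterization of $L7n1$ as the braid axis of a $2$-braid representing $T(2,3)$, exactly in the spirit of the proofs of Theorems~\ref{HFL:T(n,n)} and~\ref{L6n1:HFK}. So suppose $\widehat{\HFL}(L)\cong\widehat{\HFL}(L7n1)$. First I would read off the combinatorial consequences: $L$ is a $2$-component link, it is not split (since the rank in the top Alexander grading of one of the components is $2^{1}=2$, invoking \cite[Proposition~1]{G}), and hence the corresponding component $L_i$ is a braid axis for the other component $L_j$. I would also extract the linking number $\lk(L_1,L_2)$ from the multi-Alexander grading of the unique Maslov grading $0$ generator, which for $L7n1$ is $2$; and I would use the Thurston norm detection of link Floer homology \cite{ozsvath2008linkFloerThurstonnorm} together with the top Alexander grading to pin down the Seifert genus of each component.

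Next I would identify which component plays the role of the braid axis. The two components of $L7n1$ are not interchangeable: one is an unknot (the axis) and the other is $T(2,3)$, so their self-linking data and the individual knot Floer homologies $\widehat{\HFK}(L_i)$ — obtained via the spectral sequences $\widehat{\HFL}(L)\Rightarrow\widehat{\HFK}(L_i)\otimes V$ — will distinguish them. Concretely, I expect to show that one component, say $L_1$, has $\widehat{\HFK}(L_1)$ forced to be that of the unknot (using a Lemma~\ref{positiveAlex}-type argument: the surviving Maslov $0$ generator sits in Alexander grading $0$, and the genus bound from the Thurston norm forces $g(L_1)=0$), while the genus bound on the other component together with $\lk=2$ forces $g(L_2)\le 1$, i.e. $L_2$ is an unknot or a trefoil. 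Then $L_2 = L - L_1$ is a $2$-braid closure in the solid torus complement of the unknotted axis $L_1$ with winding number equal to the linking number $2$; such a braid closure with $\lk=2$ that is either an unknot or a trefoil must be $T(2,3)$ (the only nontrivial option), or else $L$ would be $J_1 = T(2,2)$ with reversed orientation / the unlink-type configuration, which I would exclude by comparing ranks of link Floer homology.

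Finally, having shown $L$ is $L_1\cup L_2$ with $L_1$ an unknotted braid axis and $L_2 = T(2,3)$ sitting inside the complementary solid torus as a $2$-braid with winding number $2$, I would argue that the isotopy class is determined: a $2$-braid in a solid torus with closure a given knot and fixed winding number is unique up to the relevant symmetries, and the only such link is $L7n1$ itself (this is precisely the "realize $L7n1$ as a $2$-braid representing $T(2,3)$ together with the braid axis" observation already stated in the section introduction, run in reverse). The main obstacle I anticipate is the bookkeeping in the spectral-sequence argument that pins down $\widehat{\HFK}(L_1)$ and $\widehat{\HFK}(L_2)$ precisely enough — in particular ruling out that $L_2$ could be a nontrivial genus-$1$ knot other than $T(2,3)$ and ruling out that the braid axis could be knotted — together with verifying that the rank of $\widehat{\HFK}(L)$ (equivalently $\widehat{\HFL}(L7n1)$) genuinely excludes the competing configurations (e.g. a $T(2,\pm 4)$-type sublink), exactly the kind of rank comparison used at the end of the proof of Theorem~\ref{L6n1:HFK}.
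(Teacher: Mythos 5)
Your proposal is correct and follows essentially the same route as the paper: use the spectral sequences to $\widehat{\HFK}(L_i)\otimes V$ to constrain each component to be an unknot or $T(2,3)$, identify the unknotted component as a braid axis of the other, and conclude from the fact that $L-L_1$ is a closed $2$-braid representing $T(2,3)$. The one caution is that ``$g(L_2)\le 1$'' by itself does not yield ``unknot or trefoil'' --- you need the closed-$2$-braid structure (so that $L_2$ is some $T(2,k)$), or the rank bound $\rank(\widehat{\HFK}(L_2))\le 5$ that the paper uses, before drawing that conclusion.
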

		
The combination of these Lemmas immediately gives the proof of Theorem~\ref{HFK:L7n1}.

			L7n1 has homology as computed in \cite{OS}, and shown in Table~\ref{L7n1Table}.

	\begin{table}[]\begin{center}
	\begin{tabular}{l|lll}
		$2$    && $\F_{-1}$  & $\F_0$  \\
		$1$   &   &  $\F_{-2}$  &  $\F_{-1}$  \\
		$0$ &     & $\F_{-2}\oplus\F_{-3}$   &   \\
		$-1$ & $\F_{-5}$    & $\F_{-4}$   &   \\
		$-2$ & $\F_{-6}$    & $\F_{-5}$   &   \\
		 \hline
		& 	$-1$ & $0$&$1$ 
	\end{tabular}\end{center}
\caption{The link Floer homology of L7n1.}\label{L7n1Table}
\end{table}

 Lemma~\ref{L7n1:HFK,HFL} is proven by combining the symmetry and parity properties of the link Floer homology complex.

\begin{proof}[Proof of Lemma~\ref{L7n1:HFK,HFL}]
Since $L$ has two components, $\widehat{\HFL}(L)$ has exactly $2$ Alexander gradings.

Let $\theta_0$ be the Maslov grading $0$ generator. This generator $\theta_0$ has bi-Alexander grading $(\frac{3}{2}+x,\frac{3}{2}-x)$ for some $x$. Indeed, there must be generators sitting in gradings $(\frac{1}{2}+x,\frac{3}{2}-x)$, $(\frac{3}{2}+x,\frac{1}{2}-x)$ each of Maslov index $-1$. Together with the symmetry properties of link Floer homology, this determines the Alexander bi-gradings of $6$. The same symmetry properties also imply that the two generators in Alexander grading $0$ must have bi-Alexander grading $(0,0)$. Thus, up to choice of $x$, we need only specify the location of one more generator to determine the whole link Floer homology complex. Since each Alexander grading to be of even rank, the remaining Maslov grading $-2$ element must be in bi-Alexander grading $(\frac{1}{2}+x,\frac{1}{2}-x)$. Moreover, since the Maslov grading $-3$ component, $\theta_{-3}$, cannot persist in the spectral sequence to $\widehat{\HF}(S^3) \otimes V $, it follows that $x\in\{\frac{1}{2},0,-\frac{1}{2}\}$, for the Alexander gradings obstruct the existence of generators $y$ with $\langle\partial y,\theta_{-3}\rangle\neq 0$, and $\partial\theta_{-3}$ can only be non-zero if $x$ is in this range. Indeed, $x\neq 0$, since otherwise we would have an element with Alexander grading in $\Z$, and another with Alexander grading in $\Z+\frac{1}{2}$. The remaining two possibilities give link Floer homologies that agree with $L7n1$, as desired.

\end{proof}

We complete the proof of Theorem~\ref{HFK:L7n1} by showing that link Floer homology detects the link $L7n1$. We use the fact that $L7n1$ is the closure of a braid for $T(2,3)$ together with its braid axis.

		\begin{proof}[Proof of Lemma~\ref{L7n1:HFL}]
			Suppose a link $L$, with components $L_1$ and $L_2$ satisfies $\widehat{\HFL}(L)\cong\widehat{\HFL}(L7n1)$.
			
			Observe that $\rank(\widehat{\HFK}(L_i))\leq 5$, with equality if and only if the spectral sequence corresponding to $L_i$ collapses on the $E_1$ page. If the spectral sequence collapses on the $E_1$ page, then $\widehat{\HFK}(L_i)$ would have no shift applied to its Alexander grading as it is already symmetric around grading $0$. Therefore, if $\rank(\widehat{\HFK}(L_i))= 5$ then  $\lk(L_1,L_2)=0$. However, this is impossible because $L$ is not split and so the link Floer homology shows that $L-L_i$ is braided with respect to $L_i$.
			
			Thus $\rank(\widehat{\HFK}(L_i))<5$ and the link has components that are either unknots and trefoils. Observe that any trefoil component must be $T(2,3)$, as there are no generators of positive Maslov grading.
			
			Suppose $L_1,L_2$ are both unknots. The shifts in Alexander grading coming from the spectral sequences from $\widehat{\HFL}(L)$ to $\widehat{\HFL}(L_i)\otimes V$ tells us that $\lk(L_1,L_2)=4$ while $\lk(L_2,L_1)=2$, a contradiction.
			
			Observe that $L_1$ cannot be a trefoil for there is no Maslov grading $-2$ generator in an $A_1$ grading two less than the $A_1$ grading of the unique grading $0$ element. Thus $L_2$ is $T(2,3)$ and $\lk(L_2,L_1)=2$. Since $L_2$ is a 2-braid closure in the complement of $L_1$ and $L_2$ is $T(2,3)$, the link $L$ must be L7n1.
		\end{proof}

	\end{section}

\begin{section}{Connected sums with a Hopf link}\label{section:connectsum}

In this section we deduce some properties of link Floer homology under the operation of taking a connected sums with a Hopf link. We then explore some applications of these properties to the question of link detection. Our main application is to find two infinite families of links which are not detected by Khovanov homology or knot Floer homology but which are detected by link Floer homology. Throughout this section we let $H$ denote the Hopf link.

\begin{proposition}
A link $L$ can be expressed as $L' \# H$ if and only if there is an Alexander grading in $\widehat{\HFL}(L)$ where the span of its non-zero grading levels is $\{ -1/2 , 1/2\}$.
\end{proposition}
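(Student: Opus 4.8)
The plan is to prove the two implications separately. The direction ``$L = L'\#H$ implies the grading condition'' is a short Künneth computation, while the converse is the substantive part, which I would extract from the fact that link Floer homology detects the Thurston norm of the link complement~\cite{ozsvath2008linkFloerThurstonnorm}.

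For the forward direction, write the Hopf summand as $H = H_1 \cup H_2$, attached to $L'$ along a component of $L'$ and along $H_1$, and let $\mu := H_2$ be the resulting component of $L$. The Künneth formula gives $\widehat{\HFL}(L) \cong \widehat{\HFL}(L') \otimes \widehat{\HFL}(H)$, and since $\mu$ is not involved in the gluing, the $\mu$-Alexander grading of a tensor generator $g \otimes h$ is just the $H_2$-Alexander grading of $h$. As $\widehat{\HFL}(H)$ is supported exactly in bi-Alexander gradings $(\pm\tfrac12,\pm\tfrac12)$ and $\widehat{\HFL}(L') \neq 0$, the non-zero $\mu$-grading levels of $\widehat{\HFL}(L)$ are exactly $\{-\tfrac12,\tfrac12\}$.

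For the converse, suppose the $A_i$-grading of $\widehat{\HFL}(L)$, for the component $L_i$, has non-zero levels spanning $\{-\tfrac12,\tfrac12\}$; in particular $A_i$ is half-integer valued, so $\lk(L_i,L-L_i)$ is odd and $L_i$ is linked with some other component. Here I would use that link Floer homology detects the Thurston norm of $S^3 \setminus L$~\cite{ozsvath2008linkFloerThurstonnorm}: the top non-zero $A_i$-grading controls the minimal genus $g$ of a properly embedded surface in the exterior of $L$ whose boundary is a longitude of $L_i$ together with $|\lk(L_i,L_j)|$ meridians of each $L_j$, via the inequality (top grading) $\geq g + \tfrac12\sum_{j\neq i}|\lk(L_i,L_j)|$. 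Setting the top grading equal to $\tfrac12$ and using that $\lk(L_i,L-L_i)$ is odd forces $g = 0$, $\lk(L_i,L_k) = \pm1$ for a unique $k$, and $\lk(L_i,L_j) = 0$ for $j \neq i,k$; the minimizing surface is then a genus-zero surface with two boundary circles, i.e.\ an annulus joining $L_i$ to a meridian of $L_k$. Such an annulus makes $L_i$ isotopic, in $S^3\setminus(L-L_i)$, to a meridian of $L_k$, and carrying the meridian disk of $L_k$ along this isotopy produces an embedded disk bounded by $L_i$ that meets $L-L_i$ in exactly one transverse point, lying on $L_k$ — which is precisely the statement that $L$ decomposes as $L'\#H$ with the Hopf summand glued along $L_k$.

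I expect the difficulty to lie entirely in the converse, in two places. First, I need the precise content of~\cite{ozsvath2008linkFloerThurstonnorm} in this setting: the meridional class must be allowed to have ``norm'' zero, realized by an annulus rather than a disk, and getting the normalization exactly right (so that top grading $\tfrac12$ really does force genus zero with a single meridional boundary circle) is the part I would be most careful about. Second, there is a topological cleanup: showing that the minimizing surface may be taken to be a single annulus — ruling out disconnected representatives, whose disk components would force a meridian of $L$ to bound in the exterior, contradicting $\lk(L_i,L_k) = \pm1$ — and checking that the isotopy the annulus provides can be promoted to an ambient isotopy of $S^3$ fixing $L-L_i$, so that the conclusion is a genuine isotopy statement about $L$.
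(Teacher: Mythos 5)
Your proposal is correct and follows essentially the same route as the paper, which likewise deduces from the Thurston-norm detection theorem of~\cite{ozsvath2008linkFloerThurstonnorm} that a component has $A_i$-span $\{-1/2,1/2\}$ exactly when it bounds a disk meeting the rest of the link once, this being equivalent to the $L'\#H$ decomposition. Your write-up simply fills in the details (the K\"unneth computation for the forward direction, and the annulus-plus-meridian-disk argument for the converse) that the paper's two-sentence proof leaves implicit.
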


\begin{proof}
    This observation follows directly from the connection between link Floer homology and the Thurston norm. A component has a span of its non-zero grading levels $\{ -1/2 , 1/2\}$ if and only if that component bounds a disk which intersects the rest of the link in a single point. This is equivalent to expressing $L$ as $L' \# H$.
\end{proof}

This observation has the following immediate consequence;

\begin{proposition}\label{HFLHopf}
If knot Floer homology detects a link $L$, then link Floer homology detects $L \# H$ for each choice of component of $L$ to connect sum with.
\end{proposition}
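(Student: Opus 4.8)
The plan is to reduce the detection of $L \# H$ to the detection of $L$ via knot Floer homology, using the previous proposition to recognize when a link splits off a Hopf summand, together with the K\"unneth formula for link Floer homology under connected sums. First I would suppose that $L'$ is a link with $\widehat{\HFL}(L') \cong \widehat{\HFL}(L \# H)$. The K\"unneth formula computes $\widehat{\HFL}(L \# H)$ as a tensor product of $\widehat{\HFL}(L)$ with $\widehat{\HFL}(H)$; since $\widehat{\HFL}(H)$ is the two-dimensional vector space with generators in the ``extreme'' Alexander bi-gradings of the new component, the effect is to introduce a new Alexander grading (indexed by the new Hopf component) whose only nonzero levels are $\{-1/2, 1/2\}$, each level carrying a copy of $\widehat{\HFL}(L)$ (with an appropriate Maslov shift, and with the Alexander grading of the component of $L$ that was summed shifted by $\pm 1/2$ accordingly).

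Next I would invoke the preceding proposition: because $\widehat{\HFL}(L') \cong \widehat{\HFL}(L \# H)$ has an Alexander grading whose span of nonzero grading levels is exactly $\{-1/2, 1/2\}$, we conclude that $L'$ itself can be written as $L'' \# H$ for some link $L''$ and some choice of component. Applying the K\"unneth formula again, $\widehat{\HFL}(L') \cong \widehat{\HFL}(L'') \otimes \widehat{\HFL}(H)$, and comparing with $\widehat{\HFL}(L \# H) \cong \widehat{\HFL}(L) \otimes \widehat{\HFL}(H)$ we can cancel the common Hopf tensor factor to deduce $\widehat{\HFL}(L'') \cong \widehat{\HFL}(L)$ as multi-graded vector spaces. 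To pass from link Floer homology to knot Floer homology, I would use the relationship described in Section~\ref{Section:Preliminaries}: $\widehat{\HFK}$ is obtained from the link Floer complex by projecting onto the diagonal of the multi-Alexander gradings and shifting the Maslov grading by $\frac{n-1}{2}$. Since this projection depends only on the multi-graded vector space, $\widehat{\HFL}(L'') \cong \widehat{\HFL}(L)$ forces $\widehat{\HFK}(L'') \cong \widehat{\HFK}(L)$. By hypothesis knot Floer homology detects $L$, so $L''$ is isotopic to $L$, hence $L' = L'' \# H \cong L \# H$, as desired.

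The main obstacle I anticipate is the bookkeeping needed to make the ``cancel the Hopf factor'' step rigorous and to check that the component of $L$ recovered from the splitting of $L'$ is forced to be the same component that was originally summed with $H$ — i.e., that there is a well-defined matching between the Alexander gradings of $\widehat{\HFL}(L')$ and those of $\widehat{\HFL}(L \# H)$ under the abstract isomorphism. One must be careful that the new ``Hopf'' Alexander grading is the unique one whose support has span $\{-1/2,1/2\}$ in the relevant sense, or else argue that whichever such grading is chosen, the complementary link has the right link Floer homology; the Thurston norm characterization from the previous proposition and the genus detection properties of $\widehat{\HFL}$ should handle this. A secondary point is confirming that tensoring with $\widehat{\HFL}(H)$ is injective on multi-graded vector spaces up to the relevant regrading, which is immediate once one writes out that $\widehat{\HFL}(H)$ is a free rank-one module over the exterior-algebra-like object $V$ in the new grading, so the tensor factor can be stripped off unambiguously.
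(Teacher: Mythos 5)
Your proposal is correct and takes essentially the same route as the paper's (much terser) proof: recognize the Hopf summand via the preceding proposition, strip it off with the K\"unneth formula, collapse the remaining Alexander gradings onto the diagonal to reduce to the knot Floer detection hypothesis, and compare multi-gradings to identify which component was summed. The bookkeeping concerns you flag (matching Alexander gradings under the abstract isomorphism) are real but are left implicit in the paper as well.
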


\begin{proof}
    Considering the grading associated to the new unknotted component and then collapsing all of the the other Alexander gradings distinguishes that the link is $L \# H$ for some choice of component to connect sum onto. Comparing the multi gradings associated to the components of $L$ on $\widehat{\HFL}(L \#H)$ and those on $\widehat{\HFL}(L)$ determines which component of $L$ was chosen for the connected sum.
\end{proof}

Combining Proposition~\ref{HFLHopf} and previous knot Floer detection results immediately gives some new links that are detected by link Floer homology.

We now provide two infinite families of links which are detected by link Floer homology but are not detected by Khovanov homology or knot Floer homology.

\begin{theorem}~\label{stronger}
   There exists infinitely many pairs of links $(L,L')$ such that link Floer homology detects $L$ and $L'$ but $\Kh(L) \cong \Kh(L')$ and $\widehat{\HFK}(L) \cong \widehat{\HFK}(L')$.
\end{theorem}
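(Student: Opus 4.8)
The plan is to produce the families by applying Proposition~\ref{HFLHopf} to links that knot Floer homology is already known to detect, while choosing those links so that the connected sums with a Hopf link become indistinguishable to Khovanov homology and knot Floer homology. The natural candidates are the torus links $T(2,2n)$, which knot Floer homology detects by Theorem~\ref{knotFloerT24} for $n=2$ and more generally whose link Floer detection we have established — but to get $\widehat{\HFK}$ coincidences after connect-summing with $H$ I would instead look for pairs of \emph{distinct} links with the same $\widehat{\HFK}$ and the same $\Kh$, each of which is nevertheless detected by link Floer homology once tensored up. Concretely, first I would fix a pair $(K_1, K_2)$ of distinct links, each detected by knot Floer homology, with $\widehat{\HFK}(K_1)\cong\widehat{\HFK}(K_2)$ — for instance a knot and a link, or two links differing only in orientation data that $\widehat{\HFK}$ and $\Kh$ cannot see. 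Then set $L = K_1 \# H$ and $L' = K_2 \# H$.

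The key steps, in order: (1) By Proposition~\ref{HFLHopf}, since knot Floer homology detects $K_1$ and $K_2$, link Floer homology detects $L = K_1 \# H$ and $L' = K_2 \# H$ (for the chosen components). (2) Use the Künneth formula for $\widehat{\HFK}$ under connected sum to compute $\widehat{\HFK}(L) = \widehat{\HFK}(K_1)\otimes \widehat{\HFK}(H)$ and similarly for $L'$; since $\widehat{\HFK}(K_1)\cong\widehat{\HFK}(K_2)$ the two agree. (3) Use the corresponding Künneth-type formula for Khovanov homology under connected sum (Khovanov homology of a connected sum is a tensor product over the appropriate Frobenius algebra) to conclude $\Kh(L)\cong\Kh(L')$, again using $\Kh(K_1)\cong\Kh(K_2)$ — so one must choose $K_1,K_2$ with the same Khovanov homology as well. (4) Produce infinitely many such pairs: take $K_1 = J_n$ and $K_2$ to be some other link (or a reoriented variant) sharing $\widehat{\HFK}$ and $\Kh$ with $J_n$, and let $n$ range over the positive integers; distinctness of the $L,L'$ across different $n$ follows from, e.g., the genus detection of link Floer homology or a rank count.

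The main obstacle is finding, for each $n$, an honest pair $(K_1,K_2)$ of \emph{non-isotopic} links with $\widehat{\HFK}(K_1)\cong\widehat{\HFK}(K_2)$ \emph{and} $\Kh(K_1)\cong\Kh(K_2)$, such that both are detected by knot Floer homology — the detection hypothesis is exactly what forces them to be isotopic if we are not careful, so the pair must be built out of links where $\widehat{\HFK}$ coincides but the full link Floer homology (which sees, e.g., multi-Alexander gradings or linking data) does not. The cleanest route is likely to take one of the pair to be a split-type or orientation-reversed companion of the other: knot Floer homology, being a single-Alexander-grading theory obtained by collapsing the multi-grading, cannot detect the reversal of a component's orientation in general, whereas link Floer homology can; and Khovanov homology is orientation-insensitive up to an overall grading shift. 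So I would take $K_2$ to be $K_1$ with one component's orientation reversed (e.g.\ $T(2,2n)$ versus $J_n$), verify $\widehat{\HFK}$ and $\Kh$ agree, invoke the already-proven knot Floer detection of $J_n$ together with a parallel detection statement for $T(2,2n)$, and then Hopf-sum. Checking that the resulting $L, L'$ are genuinely non-isotopic — and that we really do get infinitely many distinct pairs — is the remaining routine verification, handled by comparing linking numbers or Thurston norms read off from $\widehat{\HFL}$.
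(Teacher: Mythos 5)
There is a genuine gap, and it sits exactly at the point you flag as ``the main obstacle.'' Your construction needs, for each $n$, a pair $(K_1,K_2)$ of \emph{non-isotopic} links with $\widehat{\HFK}(K_1)\cong\widehat{\HFK}(K_2)$ and $\Kh(K_1)\cong\Kh(K_2)$, \emph{each detected by knot Floer homology}. But this is self-contradictory on its face: if knot Floer homology detects $K_1$ and $\widehat{\HFK}(K_2)\cong\widehat{\HFK}(K_1)$, then $K_2$ is isotopic to $K_1$. Your proposed escape --- taking $K_2$ to be $K_1$ with one component's orientation reversed, e.g.\ $T(2,2n)$ versus $J_n$ --- does not work, because knot Floer homology is \emph{not} insensitive to reversing the orientation of a single component: the linking numbers and hence the Alexander gradings change. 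Indeed the paper records $\widehat{\HFK}(J_n)\cong \F^n_{3/2}[1]\oplus\F^{2n}_{1/2}[0]\oplus\F^n_{-1/2}[-1]$, supported in Alexander gradings $\{-1,0,1\}$, whereas $\widehat{\HFK}(T(2,2n))$ is supported in Alexander gradings running from $-n$ to $n$; the coexistence of Theorem~\ref{HFK:KN} and Theorem~\ref{T(2,2n)} depends precisely on these being different. So no pair of the kind you want is ever produced, and step (4) of your outline cannot be completed as written.

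The mechanism that actually makes the theorem work is different: the invariance that $\Kh$ and $\widehat{\HFK}$ fail to see is not orientation reversal but the \emph{choice of component} onto which a Hopf link is connect-summed. The paper takes two trees of unknots, $L_n$ and $L_{(a,b)}$ with $a+b+1=n$; both are obtained from an unknot by summing on $n$ Hopf links, but attached to different components, so the K\"unneth formulas give $\Kh(L_n)\cong\Kh(L_{(a,b)})$ and $\widehat{\HFK}(L_n)\cong\widehat{\HFK}(L_{(a,b)})$ while the links are visibly non-isotopic. Detection of each by $\widehat{\HFL}$ is then proved directly (Theorems~\ref{family1} and~\ref{family2}) using the Thurston-norm characterization of components bounding once-punctured disks and an induction on the number of leaves --- not via a single application of Proposition~\ref{HFLHopf}, since the summands here are themselves multi-component links rather than knot-Floer-detected links. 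If you want to salvage your framework, the correct move is to sum a Hopf link onto two \emph{different, non-symmetric} components of one fixed detected link, rather than to seek two distinct detected links with identical $\widehat{\HFK}$.
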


To prove Theorem~\ref{stronger} we will first introduce two families of links and show every link in the families are detected by link Floer homology. This is the content of Theorems~\ref{family1} and~\ref{family2}. Then we will highlight explicit examples within these families which cannot be distinguished using Khovanov homology or knot Floer homology.

Both families of links are trees of unknots. We describe each family here. For the first family, let $L_n$ be the tree of unknots corresponding to the graph with $n-1$ vertices each connected to a single vertex. For the second family, let $L_{(a,b)}$ be the tree of unknots corresponding to the graph with $a+b + 2$ vertices with $a$ vertices connected to a vertex $x$, $b$ vertices connected to a vertex $y$ and an edge connecting the vertex $x$ and the vertex $y$.

\begin{theorem}~\label{family1}
For each $n \geq 2$, if $\widehat{\HFL}(L) \cong \widehat{\HFL}(L_n)$, then $L$ is isotopic to the link $L_n$.
\end{theorem}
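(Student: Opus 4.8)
The plan is to mimic the structure of the proof that link Floer homology detects $T(n,n)$ (Theorem~\ref{HFL:T(n,n)}), adapted to the tree-of-unknots geometry. The link $L_n$ is a "star" of unknots: one central unknot $C$ together with $n-1$ meridional unknots, each linking $C$ once and unlinked from one another. First I would extract the combinatorial shape of $\widehat{\HFL}(L_n)$ from its Thurston-norm data and its spectral-sequence behavior, isolating the analogues of properties (1)--(4) from the $T(n,n)$ discussion: a unique Maslov grading $0$ generator, its multi-Alexander grading (which will be asymmetric here, large in the central coordinate and $0$ in the leaf coordinates, reflecting the linking pattern), an upper bound on the support in each $A_i$, and a rank count in the top $A_i$ grading forcing braidedness of $L-L_i$. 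Given $\widehat{\HFL}(L)\cong\widehat{\HFL}(L_n)$, the component-count arguments from Section~\ref{Section:Preliminaries} pin down that $L$ has $n$ components, and the "top Alexander grading $\Rightarrow$ braid axis" criterion (\cite[Proposition~1]{G}) shows $L$ is non-split with each $L_i$ a braid axis for $L-L_i$ where the relevant grading is maximal.

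Next I would run the linking-number analysis. Using Lemma~\ref{positivelinking} (or the same argument: the unique Maslov $0$ generator must survive every spectral sequence to a sublink, forcing the Alexander-grading shifts $\tfrac12\lk(L_i,L_j)$ to be nonnegative), one gets $\lk(L_i,L_j)\geq 0$ for all $i,j$, and non-splitness upgrades this to the statement that each component linking the rest does so with positive total linking. Then the Seifert-genus bound — top nonzero $A_i$ grading $\geq g(L_i) + \sum_{j\neq i}\tfrac12\lk(L_i,L_j)$ — combined with the known value of that top grading from $\widehat{\HFL}(L_n)$ forces $g(L_i)=0$ (so every component is an unknot) and pins down every linking number: the leaf components must have total linking $1$ (hence link only $C$, once), and $C$ has total linking $n-1$. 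One must also check that the pairwise linking numbers among leaves are all $0$; this should follow from the finer multi-Alexander grading data of the unique Maslov $0$ generator together with the symmetry of the complex, exactly as the case analysis at the end of the $T(3,3)$ proof rules out the stray configurations.

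At that point $L$ is an $n$-component link of unknots with a prescribed linking matrix (that of $L_n$) in which each component is a braid axis for its complement; the last step is a topological uniqueness argument, by induction on $n$, showing $L_n$ is the only such link. The base case $n=2$ is the Hopf link, already handled. For the inductive step: view a leaf component $L_i$ as a braid axis, so $L-L_i$ is a braid $\beta$ on $n-1$ strands whose closure is an $(n-1)$-component link of unknots with the linking matrix of $L_{n-1}$; by induction $\widehat{L-L_i}\simeq L_{n-1}$, and then one argues the braid $\beta$ must be trivial — here one uses that $L_i$ links $L-L_i$ with total linking $1$ and links each of the leaf strands $0$ times, forcing $\beta$ to be (up to conjugacy) a braid whose closure together with this particular axis reconstructs $L_n$; the unknottedness of the central component and the $L$-space/braid-positivity constraints close this off. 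I expect the main obstacle to be precisely this last topological rigidity step: ranking out exotic braidings that happen to produce the same linking data and component types as $L_n$ but are not isotopic to it. In the $T(n,n)$ proof this was dispatched in a sentence ("view $L$ as a braid axis of some $n$-braid ... satisfying the same properties"), and the analogous claim here — that the braid-axis/unknot/linking-matrix data determines a tree of unknots — should again reduce, via Birman--Menasco-type braid foliation arguments or the classification of such Brunnian-adjacent links, to the observation that the complement of the central unknot forces the braid to be conjugate to the obvious one. I would be prepared to supply this via an explicit isotopy once the algebraic constraints are all in hand.
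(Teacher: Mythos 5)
There is a genuine gap at your final ``topological rigidity'' step, and it is the whole difficulty. After your algebraic analysis you retain only the following about $L$: it has $n$ unknotted components, it is non-split, it has the linking matrix of $L_n$ (a central component linking each leaf once, leaves pairwise unlinked), and the central component is a braid axis for the other $n-1$ strands. That data does \emph{not} determine $L_n$. Take any non-trivial Brunnian pure braid, e.g.\ $\beta=(\sigma_1\sigma_2^{-1})^3$, whose closure is the Borromean rings, and adjoin its braid axis: the result is a non-split $4$-component link with unknotted components, the linking matrix of $L_4$, and a central braid axis, yet it is not $L_4$. So the claim that ``the braid-axis/unknot/linking-matrix data determines a tree of unknots'' is false, and the step you defer to an explicit isotopy cannot be supplied from the constraints you have extracted; closing it along your route amounts to re-proving that link Floer homology detects the trivial braid among braid closures, i.e.\ \cite[Theorem~3.1]{BG}, which is a substantial theorem and not a consequence of linking numbers. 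Your inductive step is also internally inconsistent as written: a leaf component has total linking number $1$ with the rest of the link, so $L-L_i$ cannot be an $(n-1)$-stranded braid in the complement of a leaf $L_i$; for $n\geq 3$ only the central component is a braid axis.

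The paper's proof is much shorter and rests on an input you never invoke: the Thurston-norm characterization (the first Proposition of Section~\ref{section:connectsum}) that the $A_i$ grading of $\widehat{\HFL}(L)$ is supported exactly in $\{-1/2,1/2\}$ if and only if $L_i$ bounds a disk meeting the rest of the link in a single point, i.e.\ $L=L'\# H$ along that component. Reading this off for $n-1$ components of $L$, and using non-splitness (there is a generator with all Alexander gradings non-zero) to see that each such disk must meet the remaining central component, exhibits $L$ as an iterated connected sum of Hopf links along one component, which is precisely $L_n$. The disk-bounding property is strictly stronger than ``unknotted with linking number $1$ to the center and $0$ to the other leaves,'' and it is exactly what excludes the Brunnian-type configurations that defeat your approach.
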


\begin{remark}
    The links $L_n$ can be viewed as the trivial $n-1$-braid together with its braid axis. So Theorem~\ref{family1} was already known because link Floer homology detects braid closures~\cite[Proposition~1]{G} and detects the trivial braid amongst braid closures~\cite[Theorem~3.1]{BG}. However, we provide a different proof of Theorem~\ref{family1} because it is a simpler case of the ideas used in the proof of Theorem~\ref{family2}.
\end{remark}

\begin{proof}[Proof of Theorem~\ref{family1}]

   Suppose $L$ has the same link Floer homology as $L_n$.  First notice that $L$ cannot be a split link because there is a generator with all of its Alexander gradings non-zero. Additionally we see that $n-1$ components of $L$ each bound a disk intersecting the rest of $L$ in exactly one point. By the observation that $L$ is not split, we see that each of these $n-1$ components must bound a disk which only intersects the final component of $L$. So then $L$ is isotopic to the link $L_n$.
\end{proof}

\begin{theorem}~\label{family2}
For every pair $(a,b)$ with $a$ and $b$ positive, if $\widehat{\HFL}(L) \cong \widehat{\HFL}(L_{(a,b)})$, then $L$ is isotopoic to the link $L_{(a,b)}$.
\end{theorem}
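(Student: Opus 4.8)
The plan is to mimic the structure of the proof of Theorem~\ref{family1}, extracting the relevant topological information directly from the link Floer homology of $L_{(a,b)}$ via the connection to the Thurston norm (equivalently, the characterization of connected sums with a Hopf link in the preceding Proposition). First I would record the structural features of $\widehat{\HFL}(L_{(a,b)})$ that are forced on any $L$ with the same link Floer homology: the presence of a generator with all Alexander gradings non-zero shows $L$ is not split; and for exactly $a+b$ of the components, the span of the non-zero grading levels in the associated Alexander grading is $\{-1/2,1/2\}$, which by the Proposition means each of these components bounds a disk meeting the rest of $L$ in a single point. Since $L$ is not split, each such disk must be punctured by exactly one of the remaining two components, call them $L_x$ and $L_y$. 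This already shows $L$ is obtained from a link on the two components $L_x \cup L_y$ by connect-summing on $a+b$ unknot-meridian components.

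Next I would pin down how the $a+b$ "leaf" components are distributed between $L_x$ and $L_y$, and what the two-component link $L_x \cup L_y$ is. The distribution is detected by comparing the multi-Alexander gradings: the leaf components that puncture the disk of $L_x$ versus those puncturing $L_y$ are distinguished by which of the $L_x$- or $L_y$-Alexander gradings they interact with non-trivially, exactly as in the proof of Proposition~\ref{HFLHopf}. To identify $L_x \cup L_y$ itself, I would collapse all the leaf Alexander gradings (the same maneuver used in Proposition~\ref{HFLHopf}): connect-summing with a Hopf link corresponds on link Floer homology to tensoring with the link Floer homology of $H$ in the new grading, so collapsing that grading recovers $\widehat{\HFL}$ of the link with that leaf removed, up to a tensor factor of $V$. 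Iterating, $\widehat{\HFL}(L_x\cup L_y)$ (tensored with $V^{a+b}$) is determined, and it should match that of the Hopf link $H$; then one invokes the detection of $H$ by link Floer homology — or more simply, that $L_x$ bounds a disk meeting $L_y$ once and vice versa, so $L_x\cup L_y$ is itself a Hopf link — to conclude $L_x\cup L_y \cong H$. Assembling: $L$ is $a$ meridianal unknots summed onto $L_x$, $b$ meridianal unknots summed onto $L_y$, and $L_x\cup L_y$ a Hopf link, which is precisely $L_{(a,b)}$.

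The main obstacle I anticipate is the bookkeeping that separates the two "central" components $L_x$ and $L_y$ from each other and correctly sorts the $a+b$ leaves into the two groups of sizes $a$ and $b$ — in particular, ruling out degenerate possibilities such as a leaf whose disk is punctured by a component that is itself a leaf, or configurations where the roles of $L_x$, $L_y$ and a leaf get confused when $a$ or $b$ is small. This is where one must use the non-splitness of $L$ together with the precise multi-graded structure of $\widehat{\HFL}(L_{(a,b)})$ carefully, rather than just the single-component spans. The analogous step in Theorem~\ref{family1} is trivial because there is only one non-leaf component, so all leaves necessarily attach to it; here the argument genuinely needs the finer grading data, but it is still purely a matter of tracking which Alexander gradings a given generator is non-trivial in, so I expect it to go through without any new input beyond the Thurston-norm property and the Künneth/collapsing behavior already invoked in this section.
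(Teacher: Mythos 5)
Your proposal is correct in substance and uses the same ingredients as the paper --- non-splitness extracted from a generator with all Alexander gradings non-zero, the characterization of Hopf summands by an Alexander grading whose non-zero span is $\{-1/2,1/2\}$, and the K\"unneth behavior of $\widehat{\HFL}$ under $\#\,H$ --- but it is organized genuinely differently. You decompose $L$ all at once: identify all $a+b$ leaves, sort them onto the two central components via the multi-grading and linking-number data, strip them all off, and identify the remaining two-component core as a Hopf link before reassembling. The paper instead inducts on $a$, with base case $L_{(0,b)}=L_{b+1}$ handled by Theorem~\ref{family1}: it finds a \emph{single} leaf whose disk meets $X$, writes $L=L'\#H$, computes $\widehat{\HFL}(L')\cong\widehat{\HFL}(L_{(a-1,b)})$ by the K\"unneth formula, and hands the rest to the inductive hypothesis. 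The induction buys exactly the thing you flag as your main obstacle: after peeling one leaf, the entire remaining structure is re-read from the smaller homology, so the global sorting of leaves into groups of sizes $a$ and $b$, the exclusion of leaf-on-leaf attachments, and the simultaneous disjointness of the decomposing spheres never have to be addressed in one pass. One caution about your proposed shortcut for the core: the spans of the $L_x$- and $L_y$-gradings of $\widehat{\HFL}(L)$ are $\{-(a+1)/2,\dots,(a+1)/2\}$ and $\{-(b+1)/2,\dots,(b+1)/2\}$, not $\{-1/2,1/2\}$, so you cannot read off directly from $\widehat{\HFL}(L)$ that $L_x$ bounds a disk meeting $L_y$ once; identifying the core really does require the K\"unneth/collapsing computation of $\widehat{\HFL}(L_x\cup L_y)$ followed by Hopf link detection, which is the first option you list and which does go through.
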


\begin{proof}

First notice that link Floer homology detects the link $L_{(0,b)} = L_{b+1}$. We will now proceed by induction on $a$.

    Suppose that $L$ has the same link Floer homology as $L_{(a,b)}$. First notice that $L$ cannot be a split link because there is a generator with all of its Alexander gradings non-zero. Additionally we see that $a+b$ components of $L$ each bound a disk intersecting the rest of $L$ in exactly one point. By the observation that $L$ is not split, we see that each of these $a+b$ components must bound a disk which only intersects one of the final two components of $L$. Call these final components $X$ and $Y$ based on if their Alexander gradings agree with the Alexander gradings associated to the component in the tree of unknots for the vertex $x$ or $y$ respectively. Without loss of generality, at least one component bounds a disk that intersects $X$ in a single point. Then $L$ can be written as $L' \# H$ where the connect sum is taken along the component $X$. A quick computation shows that $\widehat{\HFL}(L') \cong \widehat{\HFL}(L_{(a-1, b)})$.  By induction $L'$ is isotopic to $L_{(a-1, b)}$ whence $L$ is isotopic to $L_{(a, b)}$.
\end{proof}

With these detection results in place, we are now ready to prove Theorem~\ref{stronger}.

\begin{proof}[Proof of Theorem~\ref{stronger}]

Consider the links $L_n$ and $L_{(a,b)}$ with $a + b + 1 = n$. These links are detected by link Floer homology. We now check that $\Kh(L_n) \cong \Kh(L_{(a,b)})$ and $\widehat{\HFK}(L_n) \cong \widehat{\HFK}(L_{(a,b)})$.

Both links can be constructed by starting with an unknot and connect summing a Hopf link $n$ times in total. A simple computation shows that Khovanov homology and knot Floer homology of $L \# H$ does not depend on which component of $L$ the Hopf link is connect summed onto. This shows $\Kh(L_n) \cong \Kh(L_{(a,b)})$ and $\widehat{\HFK}(L_n) \cong \widehat{\HFK}(L_{(a,b)})$.
    
\end{proof}

\end{section}

	\begin{section}{Applications to annular Khovanov homology}\label{section:Khovanov}
	
	Annular Khovanov homology was defined by Asaeda-Przytycki-Sikora~\cite{asaeda_categorification_2004} as a categorification of the Kauffman bracket skein module of the thickened annulus. The resulting theory is an invariant of links in the thickened annulus $A\times I$ or alternatively the complement of an unknot in the 3-sphere $S^3 \setminus U$. In particular, annular Khovanov homology is well suited to studying braid closures~\cite{BG, grigsby_sutured_2014,hubbard_sutured_2017,hubbard_annular_2016}.
	
	In this section we apply some of our earlier knot Floer detection results to show that annular Khovanov homology detects certain braid closures. The proofs will rely on the spectral sequence from annular Khovanov homology of a link $L$ to the knot Floer homology of the lift of the annular axis in the $\Sigma(L)$~\cite{grigsby_sutured_2014,roberts_knot_2013}.
	
	We use knot Floer detection results for $T(2,3)$, $T(2,4)$ and $T(2,6)$ to show annular Khovanov homology detects the closure of the braids $\sigma_1 \sigma_2$, $\sigma_1 \sigma_2\sigma_3$, and $\sigma_1 \sigma_2\sigma_3 \sigma_4 \sigma_5$ .The structure of each proof is similar; first we use properties of annular Khovanov homology to deduce necessary topological properties of the annular knot like braidedness or unknottedness. Then we use a knot Floer detection result to show that the lift of the annular axis is $T(2,3)$, $T(2,4)$ or $T(2,6)$ respectively. Finally we translate this into information about the annular link.
	
	The spectral sequence from the annular Khovanov homology of an annular link $L$ to the knot Floer homology of the lift of the annular axis in $\Sigma(L)$ is defined with $\Z/2\Z$ coefficients, at times however we will work with annular Khovanov homology over $\C$ because with these coefficients annular Khovanov homology has the structure of an $\mathfrak{sl}_2(\C)$ representation~\cite[Proposition~3]{grigsby_annular_2018}.
	
	More is known about the knot Floer homology of genus~1 fibered knots so we are able to prove more results for the closure of the 3-braid $\sigma_1 \sigma_2$ than for the other two annular knots.

\begin{thm}~\label{rank6}
If $L$ is a 3-braid closure and $\dim(\AKh( L , \Z / 2\Z)) = 6$ then $L$ is isotopic to $ \widehat{\sigma_1 \sigma_2}$ or $\widehat{\sigma_1^{-1} \sigma_2^{-1}} $ in $A \times I$.
\end{thm}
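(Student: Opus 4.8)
The plan is to leverage the spectral sequence from $\AKh(L,\Z/2\Z)$ to $\widehat{\HFK}(\tilde U)$, where $\tilde U$ is the lift of the annular axis in the double branched cover $\Sigma(L) = \Sigma(\widehat{\beta})$ for $\beta$ a $3$-braid. Since $L$ is a $3$-braid closure, $\Sigma(L)$ is obtained by surgery on a knot in $S^3$ coming from the plat/branched structure, but more usefully, the lift of the annular axis in the double branched cover of a braid closure is a fibered knot whose fiber is the double branched cover of the disk branched over the braid points. For a $3$-braid the fiber is a genus-one surface, so $\tilde U$ is a genus-one fibered knot in some $3$-manifold; when additionally $\dim \AKh(L,\Z/2\Z) = 6$, the spectral sequence forces $\widehat{\HFK}(\tilde U)$ to have rank at most $6$, and combined with the parity/symmetry constraints from Section~\ref{Section:Preliminaries} this should pin down $\widehat{\HFK}(\tilde U)$ to be that of $T(2,3)$ (rank $3$ over $\Z/2\Z$ in the relevant normalization) or a rank-$5$ or rank-$6$ possibility that must be excluded.

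Concretely, first I would recall that $\dim \AKh \geq \dim \widehat{\HFK}(\tilde U)$ and that the $\mathfrak{sl}_2(\C)$-action on $\AKh(L,\C)$ together with $\dim = 6$ severely restricts the possible weight-space decompositions; I would use this to control the homological/quantum gradings and deduce that $\tilde U$ is fibered of genus one — this uses that knot Floer detects the Thurston norm/genus and detects fiberedness (the top Alexander grading has rank one). Second, I would invoke the knot Floer detection of $T(2,3)$ (the trefoil) — stated as known in the excerpt (Ghiggini) — to conclude $\tilde U$ is the right-handed or left-handed trefoil, hence $\Sigma(L)$ together with $\tilde U$ is the standard genus-one fibered picture. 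Third, I would translate back: the annular axis downstairs is an unknot whose preimage is $\tilde U$, and identifying $\tilde U = T(2,3)$ forces the monodromy of $\beta$ (up to the hyperelliptic involution and conjugacy) to be the periodic $3$-braid $\sigma_1\sigma_2$ or its inverse $\sigma_1^{-1}\sigma_2^{-1}$; this step needs the classification of which $3$-braids have double branched cover realizing the trefoil as axis-lift, i.e. Murasugi's / Birman–Hilden classification of periodic $3$-braids.

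The main obstacle I expect is the last step — going from "$\tilde U$ is a trefoil" back to "$L = \widehat{\sigma_1\sigma_2}$ or $\widehat{\sigma_1^{-1}\sigma_2^{-1}}$ as an \emph{annular} link", not merely up to the hyperelliptic involution of the branched cover. One must rule out other $3$-braids whose closures have a genus-one fibered axis-lift and the correct annular Khovanov rank, and one must check that the two lifts of the annular axis (there can be a $\Z/2$ ambiguity in which knot in $\Sigma(L)$ is "the" lift) do not produce a spurious extra braid. I would handle this by combining the fiberedness data with the explicit structure of $\AKh$ in the $k$-grading (the annular grading), using that the top and bottom annular gradings of $\AKh(\widehat{\sigma_1\sigma_2})$ are one-dimensional and that the braid index is visible, then appeal to the classification of periodic $3$-braids to finish. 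A secondary technical point is keeping track of orientation/mirroring conventions so that $\sigma_1\sigma_2$ versus $\sigma_1^{-1}\sigma_2^{-1}$ are correctly both allowed and not conflated with, say, $\sigma_1\sigma_2^{-1}$.
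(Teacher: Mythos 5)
Your skeleton is the right one --- the spectral sequence from $\AKh(L,\Z/2\Z)$ to the knot Floer homology of the axis lift $\widetilde{U}$, the observation that $\widetilde{U}$ is a genus-one fibered knot, and the Birman--Hilden isomorphism $B_3\cong\Mod(S_1^1)$ to translate back to a braid --- but there is a genuine gap at the central step. You propose to conclude that $\widetilde{U}$ is a trefoil by invoking Ghiggini's knot Floer detection of $T(2,3)$. That theorem is a statement about knots in $S^3$, whereas $\widetilde{U}$ lives in $\Sigma(L)$, the double branched cover of a $3$-braid closure, which is not a priori $S^3$. The input the paper actually uses is the classification of genus-one fibered knots \emph{in arbitrary $3$-manifolds} whose knot Floer homology has rank $3$: there are exactly four, namely the two trefoils in $S^3$ and two knots in the Poincar\'e homology sphere. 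The Poincar\'e-sphere knots are genuine candidates under your hypotheses, and your argument has no mechanism to exclude them. The paper eliminates them by using the Birman--Hilden correspondence to identify the corresponding braids as $(\sigma_1\sigma_2)^{-6}\sigma_1\sigma_2$ and $(\sigma_1\sigma_2)^{6}\sigma_1^{-1}\sigma_2^{-1}$ and then computing directly that the annular Khovanov homology of their closures has dimension larger than $6$. Without this (or some substitute), the proof does not close.

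Two secondary points. First, the spectral sequence converges to $\widehat{\HFK}(\widetilde{U})\otimes V$ with $V$ two-dimensional, so $\dim\AKh(L,\Z/2\Z)=6$ gives $\rank\widehat{\HFK}(\widetilde{U})\leq 3$; since $\widetilde{U}$ is automatically a genus-one fibered knot (its fiber is the double branched cover of the disk over three points --- no Floer-theoretic detection of genus or fiberedness is needed), the rank is exactly $3$, and there is no ``rank-$5$ or rank-$6$ possibility'' to exclude at that stage. Second, your worry about the final translation back to the annular link is resolved exactly as you suspect: because $B_3\to\Mod(S_1^1)$ is an isomorphism, conjugate monodromies come from conjugate braids, so the fibered knot $(\Sigma(L),\widetilde{U})$ determines the braid closure up to conjugacy; no separate classification of periodic $3$-braids is required, and the $\mathfrak{sl}_2(\C)$-representation structure of $\AKh$ plays no role in this particular theorem.
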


\begin{proof}

The lift $\widetilde{U_L}$ of the braid axis $U$ in $\Sigma(L)$ is a genus 1 fibered knot.

The manifold $\Sigma(L) \setminus \widetilde{U_L}$ is naturally a sutured manifold where the sutures on $S^3 \setminus \widetilde{U_L}$ are two distinct pairs of meridional sutures lifted into the double branched cover from the product sutures on $A \times I$. There is a spectral sequence from $\AKh(L, \Z / 2 \Z)$ to $\widehat{\SFH}(-\Sigma(L) \setminus \widetilde{U_L}, \mathbb{Z}/2\mathbb{Z}) \cong \widehat{\HFK}(\widetilde{U_L}, -\Sigma(K), \mathbb{Z}/2\mathbb{Z}) \otimes V$ where $V$ is a 2-dimensional vector space supported in bi-gradings $(0,0)$ and $(-1,-1)$. Furthermore, the $k$-grading in $\AKh$ corresponds to the Alexander grading on $\widehat{\SFH}$ or $\widehat{\HFK}$~\cite[Theorem 2.1]{grigsby_sutured_2014}~\cite[Theorem 1.1]{roberts_knot_2013}.

From this spectral sequence we can see that $\widehat{\HFK}(\widetilde{U_K},-\Sigma(K), \mathbb{Z}/2\mathbb{Z}) $ has rank no larger than 3. Every genus 1 fibered knot has knot Floer homology at least rank 3 and there are only four genus 1 fibered knots with rank 3 knot Floer homologies and they are the left and right handed trefoils in $S^3$ and two knots in the Poincar\'e Homology Sphere~\cite[Corollary~1.6]{baldwin_note_2018}.

The monodromies of fibered knots are unique up to conjugation. The monodromy of a fibered knot in $\Sigma(L)$ is the image of a braid representing $L$ in $\Mod(S^1_1)$ under the Birman-Hilden correspondence. Finally because $B_3 \cong \Mod(S^1_1)$ conjugate monodromies must come from conjugate braids so $L$ must be isotopic to the closure of on of the 4 braids on this list that correspond these four possible fibered knots.

\begin{enumerate}
\item $\sigma_1 \sigma_2$
\item $\sigma_1^{-1} \sigma_2^{-1}$
\item $(\sigma_1 \sigma_2)^{-6}\sigma_1\sigma_2$
\item $(\sigma_1 \sigma_2)^{6}\sigma_1^{-1}\sigma_2^{-1}$
\end{enumerate}

A computation shows that the ranks of the Annular Khovanov homologies of the last two braid closures are larger than six~\cite{AKhMathematica}.

Therefore $L$ is isotopic to $ \widehat{\sigma_1 \sigma_2}$ or $\widehat{\sigma_1^{-1} \sigma_2^{-1}} $ in $A \times I$.

\end{proof}

The detection result in Theorem~\ref{detect} follows immediately from Theorem~\ref{rank6} and previous results about annular Khovanov homology.

\begin{thm}~\label{detect}
Let $L \subseteq A \times I \subseteq S^3$ be an annular link. If $\AKh( L , \Z / 2\Z) \cong \AKh( \widehat{\sigma_1 \sigma_2} , \Z / 2\Z)$ then $L$ is isotopic to $ \widehat{\sigma_1 \sigma_2}$  in $A \times I$.
\end{thm}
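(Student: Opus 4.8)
The plan is to bootstrap Theorem~\ref{detect} from Theorem~\ref{rank6} together with two additional pieces of information: a dimension count pinning down that $L$ is a $3$-braid closure, and a computation distinguishing $\widehat{\sigma_1\sigma_2}$ from its mirror $\widehat{\sigma_1^{-1}\sigma_2^{-1}}$ at the level of annular Khovanov homology. First I would recall that annular Khovanov homology carries an extra ($\mathfrak{sl}_2(\C)$-weight, equivalently annular) grading $k$, and that the top nonzero $k$-grading of $\AKh(L)$ detects the braid index of $L$ when $L$ is a braid closure — more precisely, for the closure of an $n$-braid the top $k$-grading is $n$ and its graded piece has dimension $1$ (it is the all-$v_-$ state, or its mirror the all-$v_+$ state). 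Since $\AKh(\widehat{\sigma_1\sigma_2})$ has its top $k$-grading equal to $3$, any $L$ with $\AKh(L,\Z/2\Z)\cong\AKh(\widehat{\sigma_1\sigma_2},\Z/2\Z)$ has the same top $k$-grading, hence is a $3$-braid closure (this uses that annular links realizing a given top $k$-weight with one-dimensional top piece are braid closures; cite the relevant statement from~\cite{grigsby_sutured_2014} or the $\mathfrak{sl}_2$-structure of~\cite{grigsby_annular_2018}).

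Next, $\dim_{\Z/2\Z}\AKh(L,\Z/2\Z) = \dim_{\Z/2\Z}\AKh(\widehat{\sigma_1\sigma_2},\Z/2\Z) = 6$. So $L$ is a $3$-braid closure of total annular Khovanov dimension $6$, and Theorem~\ref{rank6} applies directly: $L$ is isotopic in $A\times I$ to $\widehat{\sigma_1\sigma_2}$ or to $\widehat{\sigma_1^{-1}\sigma_2^{-1}}$. It remains to exclude the second option, and for this I would observe that $\widehat{\sigma_1^{-1}\sigma_2^{-1}}$ is the mirror of $\widehat{\sigma_1\sigma_2}$, so its annular Khovanov homology is the dual of that of $\widehat{\sigma_1\sigma_2}$ with all three gradings (homological, quantum, and annular $k$-grading) negated. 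Comparing these two graded vector spaces — e.g. the homological and quantum gradings of the generator in top $k$-grading, or an asymmetry in the bigraded Poincaré polynomial — shows $\AKh(\widehat{\sigma_1\sigma_2})\not\cong\AKh(\widehat{\sigma_1^{-1}\sigma_2^{-1}})$ as triply-graded vector spaces. Hence the hypothesis $\AKh(L,\Z/2\Z)\cong\AKh(\widehat{\sigma_1\sigma_2},\Z/2\Z)$ forces $L\simeq\widehat{\sigma_1\sigma_2}$.

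I expect the main obstacle to be the very last comparison: one must make sure that $\AKh(\widehat{\sigma_1\sigma_2})$ is genuinely \emph{not} symmetric under simultaneous negation of all three gradings, since if it happened to be symmetric then Theorem~\ref{rank6} alone would not distinguish the link from its mirror and the detection statement would be false as stated. Concretely, $\widehat{\sigma_1\sigma_2}$ is the positive-braid closure of the unknot in $A\times I$ with writhe $2$ and three strands, so its annular Khovanov homology is concentrated in non-negative homological gradings with a shift governed by the writhe, whereas the mirror lives in non-positive homological gradings — this asymmetry is robust and should be immediate from the known computation (or from~\cite{AKhMathematica}), but it is the step that genuinely needs to be checked rather than quoted, and the first step — extracting ``$3$-braid closure'' from the annular grading data — requires citing the precise braid-detection property of the top $k$-weight space rather than merely the total dimension.
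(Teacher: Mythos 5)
Your proposal is correct and follows essentially the same route as the paper's proof: deduce that $L$ is a $3$-braid closure from the braid-detection property of annular Khovanov homology (the paper cites \cite[Corollary~1.2]{grigsby_sutured_2014} and \cite[Corollary~8.4]{xie_instanton_2019}), apply Theorem~\ref{rank6} using the dimension count, and rule out $\widehat{\sigma_1^{-1}\sigma_2^{-1}}$ by a direct computation showing its annular Khovanov homology differs from that of $\widehat{\sigma_1\sigma_2}$. Your extra care about verifying the asymmetry under mirroring is exactly the ``simple computation'' the paper invokes.
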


\begin{proof}

If $\AKh( L , \Z / 2\Z) \cong \AKh( \widehat{\sigma_1 \sigma_2} , \Z / 2\Z)$ then $L$ is isotopic to a 3-braid closure~\cite[Corollary~1.2]{grigsby_sutured_2014}~\cite[Corollary~8.4]{xie_instanton_2019}.

From Theorem~\ref{rank6} we have that $L$ is isotopic to $ \widehat{\sigma_1 \sigma_2}$ or $\widehat{\sigma_1^{-1} \sigma_2^{-1}} $. A simple computation shows that $\AKh( \widehat{\sigma_1^{-1} \sigma_2^{-1}} , \Z / 2\Z)  \not \cong \AKh( \widehat{\sigma_1 \sigma_2} , \Z / 2\Z)$ so $L$ must be isotopic to $ \widehat{\sigma_1 \sigma_2}$  in $A \times I$.
\end{proof}

One interpretation of Theorem~\ref{rank6} is that $ \widehat{\sigma_1 \sigma_2}$ and $\widehat{\sigma_1^{-1} \sigma_2^{-1}} $ are the simplest 3-braids from the point of view of annular Khovanov homology. 

\begin{proposition}\label{minrank}
If $L$ is isotopic to a 3-braid closure in $A \times I$ then  $\dim(\AKh( L , \Z / 2\Z)) \geq  6$.

\end{proposition}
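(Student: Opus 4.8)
The plan is to bound $\dim(\AKh(L,\Z/2\Z))$ from below by feeding it through exactly the same spectral sequence used in the proof of Theorem~\ref{rank6}, landing in the knot Floer homology of the lift of the braid axis, and then invoking the universal lower bound on the rank of knot Floer homology of genus~$1$ fibered knots. No new technology beyond what already appears in this section is needed; the argument is essentially a repackaging of the ingredients of Theorem~\ref{rank6}.

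First I would fix the topology. Write $L=\widehat{\beta}$ for a $3$-braid $\beta$, let $U$ be the braid axis (equivalently the annular axis), and let $\widetilde{U_L}\subset\Sigma(L)$ be the lift of $U$ to the double branched cover of $S^3$ branched over $L$. The complement of the axis of a $3$-braid fibers over $S^1$ with fiber a thrice-punctured disk, and the corresponding branched double cover fibers with fiber the double cover of the thrice-punctured disk branched over three points, which is a once-punctured torus; hence $\widetilde{U_L}$ is a genus~$1$ fibered knot in $\Sigma(L)$. This is precisely the input already established at the start of the proof of Theorem~\ref{rank6}.

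Next I would recall, again as in Theorem~\ref{rank6}, the spectral sequence of \cite{grigsby_sutured_2014,roberts_knot_2013} from $\AKh(L,\Z/2\Z)$ to $\widehat{\SFH}(-\Sigma(L)\setminus\widetilde{U_L},\Z/2\Z)\cong\widehat{\HFK}(\widetilde{U_L},-\Sigma(L),\Z/2\Z)\otimes V$, where $V$ is two-dimensional. Since a spectral sequence can only decrease total dimension from the $E_1$ page to $E_\infty$, this yields $\dim(\AKh(L,\Z/2\Z))\ \ge\ 2\,\mathrm{rank}\,\widehat{\HFK}(\widetilde{U_L},-\Sigma(L),\Z/2\Z)$. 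Finally, by \cite[Corollary~1.6]{baldwin_note_2018}, every genus~$1$ fibered knot has knot Floer homology of rank at least $3$; this bound persists over $\Z/2\Z$ since the $\Z/2\Z$-rank dominates the integral rank. Combining, $\dim(\AKh(L,\Z/2\Z))\ge 2\cdot 3 = 6$.

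The step requiring the most care is not the final inequality, which is immediate, but making sure the target of the spectral sequence is $\widehat{\HFK}(\widetilde{U_L})\otimes V$ with $V$ of dimension exactly $2$ — i.e.\ that the relevant sutured manifold carries two pairs of meridional sutures, coming from the product sutures on $A\times I$ lifted to the branched cover. This identification is already carried out in the proof of Theorem~\ref{rank6}, so in practice one simply cites it. One could additionally remark that the bound is sharp: for $\widehat{\sigma_1\sigma_2}$ the lift $\widetilde{U_L}$ is a trefoil, whose knot Floer homology has rank exactly $3$, compatibly with the fact that $\dim(\AKh(\widehat{\sigma_1\sigma_2},\Z/2\Z))=6$.
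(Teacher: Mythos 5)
Your proof is correct, but it takes a genuinely different route from the one in the paper. You run the Grigsby--Wehrli/Roberts spectral sequence from $\AKh(L,\Z/2\Z)$ to $\widehat{\SFH}(-\Sigma(L)\setminus\widetilde{U_L})\cong\widehat{\HFK}(\widetilde{U_L},-\Sigma(L))\otimes V$ and combine the rank inequality $\dim(\AKh(L,\Z/2\Z))\geq 2\,\rank\widehat{\HFK}(\widetilde{U_L})$ with the lower bound $\rank\widehat{\HFK}\geq 3$ for genus~$1$ fibered knots from \cite{baldwin_note_2018}; this is exactly the inequality the paper already extracts in the proof of Theorem~\ref{rank6} (there it is read in the other direction, deducing $\rank\widehat{\HFK}(\widetilde{U_L})\leq 3$ from $\dim\AKh=6$), so your argument is sound and essentially a reorganization of ingredients already on the table. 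The paper instead stays entirely on the Khovanov side: it passes to $\C$ coefficients via universal coefficients, uses the $\mathfrak{sl}_2(\C)$ action to force an irreducible weight~$3$ summand (dimension~$4$), uses the $k$-grading symmetry and vanishing in $k=0$ to force even total dimension, and rules out dimension~$4$ by observing that the $\AKh\to\Kh$ spectral sequence would then collapse, which by \cite[Theorem~3.1]{BG} only happens for trivial braid closures. Your approach is shorter and leans on the Floer-theoretic setup already built for Theorem~\ref{rank6}; the paper's approach needs no branched covers or Floer homology and yields extra structural information along the way (evenness of the dimension, and what a hypothetical rank-$4$ example would have to look like). One small caution: make sure the rank-$3$ lower bound for genus~$1$ fibered knots is being applied with $\Z/2\Z$ coefficients, as the paper implicitly does in Theorem~\ref{rank6}; your remark that the $\Z/2\Z$-rank dominates the integral rank handles this.
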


\begin{proof}
From the universal coefficient theorem it follows that $\dim(\AKh( L , \Z / 2\Z)) \geq \dim(\AKh( L , \C))$ so it suffices to show that $\dim(\AKh( L , \C))\geq 6$. Because $L$ is a 3-braid closure, $\AKh( L , \C)$ has dimension one in grading $k = 3$. The $\mathfrak{sl}_2(\C)$ action on $\AKh( L , \C)$ and the fact that the $k$-grading gives the $\mathfrak{sl}_2(\C)$ weights implies that $\AKh( L , \C)$ contains an irreducible weight 3 representation of $\mathfrak{sl}_2(\C)$ showing that $\dim(\AKh( L , \Z / 2\Z)) \geq 4$.

The $\mathfrak{sl}_2(\C)$ action gives a symmetry in the $k$ gradings. Since $\AKh( L , \C)$ is zero in $k = 0$, the dimension of $\AKh( L , \C)$ must be even. It thus only remains to rule out the case that $\dim(\AKh( L , \Z / 2\Z)) = 4$.

If $\dim(\AKh( L , \Z / 2\Z)) = 4$ then $\AKh( L , \C)$ consists only of an irreducible weight 3 representation of $\mathfrak{sl}_2(\C)$ which must live in a single homological grading. Because $\AKh( L , \C)$ is supported in a single homological grading, the spectral sequence from $\AKh( L , \C)$ to $\Kh(L , \C)$ collapses. The proof of Theorem~3.1(a) in~\cite{BG} shows that the only braid closures for which this spectral sequence collapses immediately are closures of trivial braids. A computation shows that $\dim(\AKh( \widehat{1}_3 , \C)) > 6$ so there is no 3-braid closure with $\dim(\AKh( L , \Z / 2\Z)) = 4$.
\end{proof}

We now show that annular Khovanov homology detects the closure of the 4-braid $\sigma_1 \sigma_2\sigma_3$ in $A \times I$. We will first show that any braid closure with the same annular Khovaonov homology as $\widehat{\sigma_1 \sigma_2\sigma_3}$ must represent an unknot in $S^3$. We then show that the lifted braid axis is $T(2,4)$ using knot Floer homology. Then we complete the proof by understanding the symmetries of $T(2,4)$.

\begin{theorem}\label{thm:4braid}
    Let $L \subseteq A \times I \subseteq S^3$ be an annular link. If $\AKh( L , \Z / 2\Z) \cong \AKh( \widehat{\sigma_1 \sigma_2\sigma_3} , \Z / 2\Z)$ then $L$ is isotopic to $\widehat{\sigma_1\sigma_2\sigma_3}$ in $A \times I$.
\end{theorem}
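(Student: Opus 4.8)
The plan is to follow the template of the proof of Theorem~\ref{detect}, with genus-one fibered knots replaced by a two-component fibered link and Theorem~\ref{rank6} replaced by the knot Floer detection of $T(2,4)$ (Theorem~\ref{knotFloerT24}). First, since $\AKh(\widehat{\sigma_1\sigma_2\sigma_3},\Z/2\Z)$ is supported in $k$-gradings at most $4$ and is one-dimensional in $k$-grading $4$, the same is true of $\AKh(L,\Z/2\Z)$, so by the annular characterization of braid closures~\cite[Corollary~1.2]{grigsby_sutured_2014},\cite[Corollary~8.4]{xie_instanton_2019} (as used for $3$-braids in Theorem~\ref{detect}) the link $L$ is isotopic in $A\times I$ to the closure $\widehat{\beta}$ of a $4$-braid $\beta$. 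Let $\widetilde{U_L}\subseteq\Sigma(L)$ be the lift of the braid axis; because $L$ is a $4$-braid closure, $\widetilde{U_L}$ is a two-component fibered link whose fiber is the double cover of a disk branched over four points, namely $\Sigma_{1,2}$, and whose monodromy is the image of $\beta$ under the Birman--Hilden correspondence identifying $B_4$ with the symmetric mapping classes of $\Sigma_{1,2}$ modulo the hyperelliptic involution. Since $\widehat{\sigma_1\sigma_2\sigma_3}$ realizes $T(2,4)$ as its lifted braid axis, the goal is to force $\widetilde{U_L}$ to be $T(2,4)$ and then run Birman--Hilden backwards.

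The heart of the argument is to show that $L$ is the unknot in $S^3$, equivalently $\Sigma(L)=S^3$. The spectral sequence of~\cite[Theorem~2.1]{grigsby_sutured_2014},\cite[Theorem~1.1]{roberts_knot_2013} from $\AKh(L,\Z/2\Z)$ to $\widehat{\SFH}(-\Sigma(L)\setminus\widetilde{U_L},\Z/2\Z)\cong\widehat{\HFK}(\widetilde{U_L},-\Sigma(L);\Z/2\Z)\otimes W$ for a fixed graded vector space $W$ (with the $k$-grading matching the Alexander grading), together with $\AKh(L,\Z/2\Z)\cong\AKh(\widehat{\sigma_1\sigma_2\sigma_3},\Z/2\Z)$, bounds $\rank\,\widehat{\HFK}(\widetilde{U_L},-\Sigma(L))$. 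Feeding that bound into the spectral sequence from $\widehat{\HFK}(\widetilde{U_L},-\Sigma(L))$ to $\widehat{\HF}(\Sigma(L))\otimes V$ bounds $\rank\,\widehat{\HF}(\Sigma(L))$, hence $|H_1(\Sigma(L);\Z)|$; since $\dim_{\Z/2}H_1(\Sigma(L);\Z/2)\geq n-1$ where $n$ is the number of components of $L$, this already forces $L$ to be a knot. To then conclude $\Sigma(L)=S^3$ one must promote these rank bounds, together with the Maslov-grading information propagated from $\AKh(\widehat{\sigma_1\sigma_2\sigma_3})$ and the fact that $\Sigma(L)$ carries a genus-one open book with page $\Sigma_{1,2}$, to rule out other $\Z$-homology-sphere $L$-spaces — in the spirit of the minimality results for genus-one fibered knots of~\cite{baldwin_note_2018}. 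I expect this to be the main obstacle: the $3$-braid argument could outsource unknottedness to the completed classification of small genus-one fibered knots, which has no ready analogue for fibered links with page $\Sigma_{1,2}$, so here it has to be extracted by hand. Granting $\Sigma(L)=S^3$, a knot whose double branched cover is $S^3$ is the unknot, so $L$ is the unknot in $S^3$.

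With $\Sigma(L)=S^3$, the spectral sequence now reads $\AKh(L,\Z/2\Z)\Rightarrow\widehat{\HFK}(\widetilde{U_L})\otimes W$ with $\widetilde{U_L}\subseteq S^3$, and a direct computation~\cite{AKhMathematica} of $\AKh(\widehat{\sigma_1\sigma_2\sigma_3},\Z/2\Z)$ shows that for $L=\widehat{\sigma_1\sigma_2\sigma_3}$ this spectral sequence collapses onto $\widehat{\HFK}(T(2,4))\otimes W$; comparing ranks in each bigrading then forces $\widehat{\HFK}(\widetilde{U_L})\cong\widehat{\HFK}(T(2,4))$, so $\widetilde{U_L}$ is isotopic to $T(2,4)$ by Theorem~\ref{knotFloerT24}. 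Finally, $L=\widehat{\beta}$ where, under the Birman--Hilden correspondence, $\beta$ is carried to the monodromy of the open book of $T(2,4)$ with page $\Sigma_{1,2}$ coming from this branched-cover picture; since $\widehat{\sigma_1\sigma_2\sigma_3}$ also realizes $T(2,4)$ this way and conjugate monodromies come from conjugate braids, $\beta$ is conjugate in $B_4$ to $\sigma_1\sigma_2\sigma_3$ up to a finite ambiguity coming from the symmetry group of $T(2,4)$. As in the proof of Theorem~\ref{rank6}, each of the finitely many resulting candidate braids other than $\sigma_1\sigma_2\sigma_3$ is eliminated by computing its annular Khovanov homology~\cite{AKhMathematica}. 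Hence $L$ is isotopic to $\widehat{\sigma_1\sigma_2\sigma_3}$ in $A\times I$.
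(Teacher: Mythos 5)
Your outline matches the paper's at the level of scaffolding (braid detection, unknot detection, identify the lifted axis as $T(2,4)$ via Theorem~\ref{knotFloerT24}, then descend), but the two steps you flag or fudge are exactly where the content lies, and in both places the paper does something different from what you propose.

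First, unknottedness. Your plan --- bound $\rank\widehat{\HFK}(\widetilde{U_L},-\Sigma(L))$, push to $\widehat{\HF}(\Sigma(L))$, bound $|H_1|$, and then ``rule out other $\Z$-homology-sphere $L$-spaces'' carrying a genus-one open book with page $\Sigma_{1,2}$ --- is, as you admit, not an argument; there is no analogue of the classification in~\cite{baldwin_note_2018} for such fibered links, and the paper does not attempt one. Instead it works entirely on the Khovanov side: since $L$ is a $4$-braid closure, $\AKh(L,\C)$ contains the $5$-dimensional weight-$4$ irreducible $\mathfrak{sl}_2(\C)$-representation in homological grading $i=0$, and the rank of $\AKh(\widehat{\sigma_1\sigma_2\sigma_3},\Z/2\Z)$ in gradings $i=0,1$ forces $\AKh(L,\C)$ to consist of exactly that representation in $i=0$ plus the generators in $i=1$. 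By~\cite[Theorem~3.1]{BG} the differential $\partial_-$ inducing the spectral sequence to $\Kh(L)$ carries the highest-weight vector in $i=0$ to the one in $i=1$, and since $\partial_-$ commutes (up to sign) with the lowering operator~\cite[Theorem~1]{grigsby_annular_2018}, its image is all of the $i=1$ part; hence $\dim\Kh(L,\C)=2$ and $L$ is the unknot by Khovanov's unknot detection. This sidesteps $\Sigma(L)$ entirely and is the idea your proposal is missing.

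Second, the endgame. You want ``conjugate monodromies come from conjugate braids,'' but for $B_4\hookrightarrow\Mod(\Sigma_{1,2})$ the Birman--Hilden map is injective and \emph{not} surjective, so conjugacy of monodromies in the mapping class group does not descend to conjugacy in $B_4$ --- the paper explicitly raises this as an open question in a closing remark, so your ``finite ambiguity controlled by the symmetry group, eliminated by computing $\AKh$ of finitely many candidates'' is not something you can currently carry out (nor is it clear the candidate list is finite or computable). The paper's substitute is topological: the complement of $T(2,4)$ is Seifert fibered over an annulus with two exceptional fibers, every symmetry is isotopic to a fiber-preserving one~\cite{edmonds_group_1983}, and there is a unique orientation-preserving involution exchanging the two components; since any annular $L$ with $\widetilde{U}=T(2,4)$ arises as the quotient of such an involution, $L$ is forced to be $\widehat{\sigma_1\sigma_2\sigma_3}$. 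Both of these replacements are essential; as written, your proof has genuine gaps at both points.
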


We break the proof of Theorem~\ref{thm:4braid} up by first proving the following two lemmas.

\begin{lemma}\label{4unknot}
     Let $L \subseteq A \times I \subseteq S^3$ be an annular link. If $\AKh( L , \Z / 2\Z) \cong \AKh( \widehat{\sigma_1 \sigma_2\sigma_3} , \Z / 2\Z)$ then $L$ is an unknot in $S^3$.
\end{lemma}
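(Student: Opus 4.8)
\textbf{Proof proposal for Lemma~\ref{4unknot}.}

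The plan is to extract as much topological information as possible from the annular Khovanov homology, then feed this into the knot Floer detection result for $T(2,4)$ applied to the lift of the braid axis. First I would set up the same spectral sequence machinery as in the proof of Theorem~\ref{rank6}: there is a spectral sequence from $\AKh(L,\Z/2\Z)$ to $\widehat{\SFH}(-\Sigma(L)\setminus\widetilde{U_L},\Z/2\Z)\cong\widehat{\HFK}(\widetilde{U_L},-\Sigma(L),\Z/2\Z)\otimes V$, with the $k$-grading on $\AKh$ corresponding to the Alexander grading. Since $\AKh(L,\Z/2\Z)\cong\AKh(\widehat{\sigma_1\sigma_2\sigma_3},\Z/2\Z)$, the top $k$-grading is $k=4$ and has rank one, so $L$ is a $4$-braid closure (using~\cite[Corollary~1.2]{grigsby_sutured_2014}). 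The lift $\widetilde{U_L}$ is a fibered knot in $\Sigma(L)$ whose fiber has genus determined by the Euler characteristic of the annular Seifert surface, i.e.\ $\widetilde{U_L}$ is a genus-$2$ fibered knot (or fibered link), matching $T(2,4)$.

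Next I would use the structure of $\AKh(\widehat{\sigma_1\sigma_2\sigma_3},\C)$ as an $\mathfrak{sl}_2(\C)$-representation together with the $k$-grading (which records the weights) to pin down its total rank in each $k$-grading, and then use the collapse of the spectral sequence to $\widehat{\HFK}(\widetilde{U_L},-\Sigma(L))\otimes V$ to bound the rank of $\widehat{\HFK}(\widetilde{U_L},-\Sigma(L))$. The key point will be that this rank is small enough (and supported in the right Alexander gradings, with the right parity and symmetry) that Theorem~\ref{knotFloerT24} forces $\widetilde{U_L}$ to be $T(2,4)\subseteq S^3$ — in particular $\Sigma(L)=S^3$, so $L$ is a $4$-braid closure whose double branched cover is $S^3$. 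A $4$-braid whose closure has double branched cover $S^3$ and whose lifted braid axis is $T(2,4)$: here I would translate via the Birman--Hilden correspondence, identifying the monodromy of $\widetilde{U_L}$ with the image of a $4$-braid $\beta$ representing $L$ in the mapping class group of the fiber, and matching it against the monodromy of $T(2,4)$. Since $T(2,4)$ is fibered with a well-understood monodromy, and conjugate monodromies correspond to conjugate braids, $\beta$ is conjugate to a specific braid whose closure is the unknot in $S^3$; hence $L$ is an unknot in $S^3$.

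The main obstacle I expect is the Floer-homological bookkeeping in the middle step: one must verify that the rank and bigraded structure of $\widehat{\HFK}(\widetilde{U_L},-\Sigma(L))$ inherited from the spectral sequence is \emph{exactly} compatible with the hypotheses of Theorem~\ref{knotFloerT24} — that is, that no exotic page of the spectral sequence or cancellation allows $\widetilde{U_L}$ to be a different knot (possibly in a homology sphere that is not $S^3$) with coincidentally small knot Floer homology. Handling this cleanly will likely require either a careful Euler-characteristic/Thurston-norm argument showing $\widetilde{U_L}$ has the same Seifert genus and fiberedness as $T(2,4)$, or an appeal to the known classification of small-rank fibered knots in homology spheres (as in the genus-$1$ case via~\cite{baldwin_note_2018}), combined with an explicit annular Khovanov homology computation~\cite{AKhMathematica} to rule out the finitely many remaining candidates. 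Once $\widetilde{U_L}=T(2,4)$ in $S^3$ is established, the passage to ``$L$ is an unknot'' via Birman--Hilden is essentially formal.
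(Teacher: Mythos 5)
Your proposal inverts the logical order of the paper's argument, and the inversion creates a gap that cannot be repaired with the tools at hand. You want to run the spectral sequence to $\widehat{\HFK}(\widetilde{U_L},-\Sigma(L))\otimes V$, conclude via Theorem~\ref{knotFloerT24} that $\widetilde{U_L}=T(2,4)\subseteq S^3$, and only then deduce that $L$ is an unknot. But Theorem~\ref{knotFloerT24} is a detection result for links \emph{in $S^3$}; it says nothing about knots or links in an arbitrary double branched cover $\Sigma(L)$, and at this stage of the argument you do not know $\Sigma(L)\cong S^3$ --- that is essentially equivalent to the statement you are trying to prove. You flag this obstacle yourself and propose to fall back on a classification of small-rank fibered knots in general $3$-manifolds ``as in the genus-$1$ case,'' but no such classification exists beyond genus one: the $3$-braid argument (Theorem~\ref{rank6}) works precisely because Baldwin--Vela-Vick classify genus-$1$ fibered knots with rank-$3$ knot Floer homology in \emph{any} $3$-manifold, and $\widetilde{U_L}$ here is not even a knot --- it is a $2$-component fibered link with genus-$1$ fiber (not a ``genus-$2$ fibered knot''), for which no analogue is available. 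A secondary problem: pinning down the \emph{absolute} Maslov gradings of $\widehat{\HFK}(-\widetilde{U_L})$ from the spectral sequence (which you need even to state the hypothesis of Theorem~\ref{knotFloerT24}) is done in the paper by identifying which two generators survive to $\Kh(L,\Z/2\Z)$ --- and knowing that exactly two survive already uses that $L$ is the unknot.

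The paper's proof of this lemma avoids Floer theory entirely. It uses the $\mathfrak{sl}_2(\C)$-module structure of $\AKh(L,\C)$: since $L$ is a $4$-braid closure, the $i=0$ part must contain (hence, by the rank count, equal) the $5$-dimensional weight-$4$ irreducible; a universal-coefficients argument shows the $i=1$ generators are honest $\C$-generators; then the Baldwin--Grigsby result forces $\partial_-$ to hit the highest-weight vector in $i=1$, and commutation with the lowering operator forces the image of $\partial_-$ to be all of the $i=1$ part. Hence $\dim\Kh(L,\C)=2$ and Kronheimer--Mrowka's unknot detection finishes the lemma. The knot Floer detection of $T(2,4)$ is only invoked afterwards (Lemma~\ref{4HFK}), once $\Sigma(L)\cong S^3$ is known. (Your concluding Birman--Hilden step also overreaches: for $B_4$ the Birman--Hilden map is only injective into $\Mod(S_1^2)$, so conjugate monodromies need not come from conjugate braids --- this is exactly why the paper resorts to the symmetry group of $T(2,4)$ in the proof of Theorem~\ref{thm:4braid} --- but that issue concerns the full theorem rather than this lemma.)
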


\begin{proof}
    We first compute $\AKh( L , \mathbb{C})$ from $\AKh( L , \Z / 2\Z)$. Throughout, we will use that the dimension of annular Khovanov homology over $\mathbb{C}$ can be no larger than that over $\Z / 2\Z$. Because $L$ is a 4-braid closure, $\AKh( L , \mathbb{C})$ must contain a weight 4 irreducible $\mathfrak{sl}_2(\mathbb{C})$ representation in grading $i=0$, of dimension $5$. Thus $\AKh( L , \mathbb{C})$ must consist only of this representation in grading $i=0$ because $\AKh( L , \Z / 2\Z)$ has dimension $5$ in homological grading $0$.  We therefore have that all of the generators in grading $i = 1$ for $\AKh( L , \Z / 2\Z)$ must correspond to generators of $\AKh( L , \mathbb{C})$, for it not, they would correspond to $2$-torsion in $\AKh( L , \mathbb{Z})$, but the torsion contributes dimension in two different homological gradings by the universal coefficient theorem.

    A simple computation of annular Khovanov homology verifies that $L$ is not the trivial braid. Thus by~\cite[Theorem~3.1]{BG}, we know that the differential $\partial_-$ on $\AKh( L , \mathbb{C})$ inducing the spectral sequence to $\Kh(L)$ must send the highest weight generator in the grading $i = 0$ to the highest weight generator in the grading $i = 1$. The action $\partial_-$ is part of the action of $\mathfrak{sl}_2(\wedge)$ on $\AKh( L , \mathbb{C})$ and commutes up to sign with the lowering operator $f$~\cite[Theorem~1]{grigsby_annular_2018}. This means that the image of $\partial_-$ is spanned by all generators in grading $i = 1$. Thus $\Kh(L)$ is dimension $2$, whence $L$ is the unknot.
\end{proof}

\begin{lemma}\label{4HFK}
        Let $L \subseteq A \times I \subseteq S^3$ be an annular link. If $\AKh( L , \Z / 2\Z) \cong \AKh( \widehat{\sigma_1 \sigma_2\sigma_3} , \Z / 2\Z)$ then $\tilde{U}$ in $\Sigma(L)$ is isotopic to $ T(2,4)$.
\end{lemma}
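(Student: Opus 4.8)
\textbf{Plan for the proof of Lemma~\ref{4HFK}.}
The plan is to exploit the spectral sequence from $\AKh(L,\Z/2\Z)$ to $\widehat{\HFK}(\tilde U, -\Sigma(L),\Z/2\Z)\otimes V$, where $\tilde U$ is the lift of the annular axis, exactly as in the proof of Theorem~\ref{rank6}, and to use Lemma~\ref{4unknot} to pin down which $3$-manifold we are working in. First I would invoke Lemma~\ref{4unknot}: since $L$ represents the unknot in $S^3$, the double branched cover $\Sigma(L)$ is $S^3$, so $\tilde U$ is genuinely a knot in $S^3$ and we are computing ordinary $\widehat{\HFK}$. Second, I would observe that $\tilde U$ is a fibered knot whose fiber is the double branched cover of the disk over the $4$-braid, i.e.\ a genus-$1$ surface with some number of boundary components; more precisely the braid closure being a $4$-braid means the annular grading $k$ runs over $\{-4,-2,0,2,4\}$ and the lift is a fibered link whose Seifert genus is controlled by the Euler characteristic of this branched cover.

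Third, and this is the crux of the argument, I would run the same numerical bookkeeping used for $T(2,3)$: from the spectral sequence, $\mathrm{rank}\,\widehat{\HFK}(\tilde U)\otimes V$ is at most $\dim\AKh(L,\Z/2\Z)$, and because the $k$-grading corresponds to the Alexander grading, the Alexander gradings of $\widehat{\HFK}(\tilde U)$ are supported in the range dictated by the $k$-gradings of $\AKh(\widehat{\sigma_1\sigma_2\sigma_3})$. This forces $\widehat{\HFK}(\tilde U)$ to have the same rank and the same Alexander support as $\widehat{\HFK}(T(2,4))$ — and then Theorem~\ref{knotFloerT24} (knot Floer homology detects $T(2,4)$) would finish the job, \emph{provided} one also checks that $\tilde U$ is genuinely a two-component link of the right homological type, which follows since $\AKh$ of a $4$-braid closure has even dimension in each homological grading with the appropriate $k$-symmetry. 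I expect the main obstacle to be the careful identification of the bigradings: one must show not just that the ranks match but that the bigraded vector space $\widehat{\HFK}(\tilde U)$ agrees with $\widehat{\HFK}(T(2,4))$ — equivalently, that the spectral sequence from $\AKh(L)$ collapses in the relevant gradings, using the fibered-knot genus bound together with the $\mathfrak{sl}_2(\C)$-symmetry on $\AKh$, much as the collapse was argued in the proof of Theorem~\ref{rank6}. Once the bigraded isomorphism $\widehat{\HFK}(\tilde U)\cong\widehat{\HFK}(T(2,4))$ is in hand, Theorem~\ref{knotFloerT24} gives $\tilde U\simeq T(2,4)$ immediately.
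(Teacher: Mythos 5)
Your skeleton is right -- use Lemma~\ref{4unknot} to get $\Sigma(L)=S^3$, run the spectral sequence from $\AKh(L,\Z/2\Z)$ to the knot Floer homology of the lifted axis, and finish with Theorem~\ref{knotFloerT24} -- but the step you defer as ``the main obstacle'' is in fact the entire content of the lemma, and the tools you propose for it are not the ones that work. Theorem~\ref{knotFloerT24} needs a \emph{bigraded} isomorphism $\widehat{\HFK}(\widetilde{U})\cong\widehat{\HFK}(T(2,4))$, so matching the rank and the Alexander support (read off from the $k$-gradings) is not enough: you must also determine the Maslov gradings, and you must show the spectral sequence collapses rather than merely bounding the rank of the $E_\infty$ page from above. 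Neither follows from a ``fibered-link genus bound'' or from the $\mathfrak{sl}_2(\C)$-symmetry; the paper's argument is instead purely about the mapping-cone structure of the spectral sequence. Concretely: (i) within each homological grading $i$ the relative Maslov grading of two generators equals half the difference of their quantum gradings, because the $E_1$ page is an iterated mapping cone; (ii) the differential $\partial_-$ inducing the spectral sequence to $\Kh$ is part of the total (pseudo-holomorphic polygon counting) differential and hence drops the Maslov grading by one, which ties the $i=0$ and $i=1$ gradings together and shows all generators in a fixed $k$ (equivalently $2A$) grading share a Maslov grading; (iii) since every higher differential preserves $k$ but shifts the Maslov grading, the spectral sequence must collapse; (iv) the absolute Maslov gradings are then pinned down by the two generators (in $k$-gradings $-4$ and $-2$) that survive to $\Kh(\text{unknot})$. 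Without steps (i)--(iv) your proof does not close.

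A secondary point: you describe this as ``the same numerical bookkeeping used for $T(2,3)$,'' but the proof of Theorem~\ref{rank6} is a genuinely different argument -- there the rank bound is fed into the classification of genus-one fibered knots with rank-three $\widehat{\HFK}$ and then into the Birman--Hilden isomorphism $B_3\cong\Mod(S^1_1)$; no bigraded identification of the knot Floer homology is made. That route is unavailable here (Birman--Hilden is only injective for $B_4$, which is why the paper must separately analyze the symmetries of $T(2,4)$ in Theorem~\ref{thm:4braid}), so the grading analysis above cannot be sidestepped. Also, minor but worth fixing: $\widetilde{U}$ is a two-component link, not a knot, since the axis disk meets the branch locus in four points; you correct this later in the paragraph, but the earlier sentence should not assert it is a knot.
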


\begin{proof}
    For this computation, we will use the spectral sequence from $\AKh( L , \Z / 2\Z)$ to $\widehat{\HFK}(-\widetilde{U})$ and compute the Maslov gradings of the generators of $\AKh( L , \Z / 2\Z)$.
    
    From the construction of the spectral sequence from $\AKh( L , \Z / 2\Z)$ to $\widehat{\HFK}(-\widetilde{U})$ as an iterated mapping cone, it follows that in each $i$ grading on $\AKh( L , \Z / 2\Z)$ the relative Maslov grading of any two generators agrees with half the difference of the quantum gradings of the generators. It remains to relate the relative Maslov gradings for generators in $i$ grading $0$ and $1$ and then upgrade this information to an absolute Maslov grading.
    
    The induced differential $\partial_-$ giving the spectral sequence from $\AKh( L , \Z / 2\Z)$ to $\Kh( L , \Z / 2\Z)$ is part of the total differential on the iterated mapping cone induced by counting pseudo holomorphic polygons. Thus $\partial_-$ lowers the Maslov grading by one. This implies that for $\AKh( L , \Z / 2\Z)$, generators in the same $k$ grading live in the same relative Maslov grading. Since generators in the same $k$ grading or $2A$ grading also have the same Maslov grading, the spectral sequence to $\widehat{\HFK}(-\widetilde{U})$ collapses as all differentials preserve the $k$ grading or $2A$ grading and change the Maslov grading.
    
    To upgrade the above to a statement about the absolute Maslov grading, notice that there are only two generators survive in the spectral sequence to $\Kh( L , \Z / 2\Z)$, namely the generators that sit in the $k$ gradings $-4$ and $-2$. These generators must then be in Maslov gradings $0$ and $1$ respectively. From here we can pin down the Maslov gradings of the remaining six generators of $\AKh( L , \Z / 2\Z)$.
    
    The claim $\widehat{\HFK}(\widetilde{U}) \cong \widehat{\HFK}(T(2,4)) $ then follows from the fact that $\widehat{\HFK}(-\widetilde{U}) \cong (\widehat{\HFK}(\widetilde{U}))^* $ with the appropriate change in gradings. An application of Theorem~\ref{knotFloerT24} completes the proof of Lemma~\ref{4HFK}.

\end{proof}

With the previous lemmas we will complete the proof of Theorem~\ref{thm:4braid} by understanding the symmetries of $T(2,4)$. 

\begin{proof}[Proof of Theorem~\ref{thm:4braid}]
 
The complement of the torus link $T(2,4)$ is a Seifert fibered space with two exceptional fibers and the base space is an annulus. Any symmetry of $T(2,4)$ is isotopic to a symmetry which preserves the fibration~\cite{edmonds_group_1983}. The symmetries must also preserve the exceptional fibers so we can consider symmetries of the annulus with two marked points. There is an order two reflection that reverses the orientation of the annulus and an order two hyper elliptic involution that preserves orientation and interchanges the boundary components of the annulus. These two symmetries commute and generate the all symmetries of the annulus with two marked points. This shows that the symmetry group of $T(2,4)$ is $\mathbb{Z}/2\mathbb{Z} \oplus \mathbb{Z}/2\mathbb{Z} $ and that there is only a single symmetry of $T(2,4)$ which interchanges the components of the link and preserving the orientation of $S^3$. This is the symmetry induced by the hyper elliptic involution of the annulus.

If there is an annular link $L$ so that $T(2,4)$ is realized as $\tilde{U}$ in $\Sigma(L)$, then $T(2,4)$ has an order 2 orientation preserving symmetry that interchanges the components of $T(2,4)$ coming from the branched double covering deck transformation. Taking the quotient by this action and considering the image of the fixed point set of the involution in the complement of $T(2,4)$ recovers the annular link $L$. Because $T(2,4)$ only has one such symmetry up to isotopy the only possibility for the annular link $L$ is the braid closure $\widehat{\sigma_1 \sigma_2 \sigma_3}$.

\end{proof}

An argument of the same structure can be used to show that annular Khovanov homology detects the closure of the 4-braid $\sigma_1 \sigma_2\sigma_3\sigma_4\sigma_5$ in $A \times I$. We will first show that any braid closure with the same annular Khovaonov homology as $\widehat{\sigma_1 \sigma_2\sigma_3\sigma_4\sigma_5}$ must represent an unknot in $S^3$. We then show that the lifted braid axis is $T(2,6)$ using knot Floer homology. Then we complete the proof by understanding the symmetries of $T(2,6)$.

\begin{theorem}\label{thm:6braid}
    Let $L \subseteq A \times I \subseteq S^3$ be an annular link. If $\AKh( L , \Z / 2\Z) \cong \AKh( \widehat{\sigma_1 \sigma_2\sigma_3\sigma_4 \sigma_5} , \Z / 2\Z)$ then $L$ is isotopic to $ \widehat{\sigma_1 \sigma_2\sigma_3\sigma_4 \sigma_5}$ in $A \times I$.
\end{theorem}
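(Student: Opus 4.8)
\textbf{Proof proposal for Theorem~\ref{thm:6braid}.} The plan is to mirror the structure of the proof of Theorem~\ref{thm:4braid} precisely, replacing $T(2,4)$ with $T(2,6)$ throughout, and citing Theorem~\ref{HFKT26} in place of Theorem~\ref{knotFloerT24}. Concretely, I would split the argument into three pieces: (i) show that any annular link $L$ with $\AKh(L,\Z/2\Z)\cong\AKh(\widehat{\sigma_1\sigma_2\sigma_3\sigma_4\sigma_5},\Z/2\Z)$ represents the unknot in $S^3$; (ii) show that the lift $\widetilde U$ of the annular axis in $\Sigma(L)$ is isotopic to $T(2,6)$; and (iii) use the symmetry group of $T(2,6)$ to conclude that the only annular link producing this lift is $\widehat{\sigma_1\sigma_2\sigma_3\sigma_4\sigma_5}$.

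For step (i), since $L$ is a $6$-braid closure, $\AKh(L,\C)$ has dimension $1$ in the top $k$-grading $k=6$, so the $\mathfrak{sl}_2(\C)$-action forces an irreducible weight-$6$ summand (dimension $7$) sitting in homological grading $i=0$; comparing with the total dimension of $\AKh(\widehat{\sigma_1\sigma_2\sigma_3\sigma_4\sigma_5},\Z/2\Z)$ in grading $i=0$ should show this irreducible summand accounts for all of grading $i=0$, and then (by the same universal-coefficient argument as in Lemma~\ref{4unknot}) the remaining generators in grading $i=1$ survive to $\AKh(L,\C)$. A direct computation verifies $L$ is not the trivial braid, so~\cite[Theorem~3.1]{BG} applies: $\partial_-$ carries the highest-weight generator in $i=0$ to the highest-weight generator in $i=1$, and since $\partial_-$ commutes with the lowering operator $f$~\cite[Theorem~1]{grigsby_annular_2018}, its image spans all of grading $i=1$. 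Hence $\Kh(L,\C)$ has dimension $2$ and $L$ is the unknot in $S^3$. For step (ii), I would run the same Maslov-grading bookkeeping as in Lemma~\ref{4HFK}: the relative Maslov grading within each $i$-grading is half the difference of quantum gradings; $\partial_-$ drops Maslov grading by one so generators in a fixed $k$-grading share a relative Maslov grading; the spectral sequence to $\widehat{\HFK}(-\widetilde U)$ collapses because every differential changes the Maslov grading while preserving $k=2A$; and the two surviving generators (in $k$-gradings $-6$ and $-4$) pin the absolute Maslov gradings to $0$ and $1$, determining all twelve generators. Matching against the bigraded ranks of $\widehat{\HFK}(T(2,6))$ recorded before the proof of Theorem~\ref{HFKT26}, together with $\widehat{\HFK}(-\widetilde U)\cong(\widehat{\HFK}(\widetilde U))^*$, gives $\widehat{\HFK}(\widetilde U)\cong\widehat{\HFK}(T(2,6))$, and Theorem~\ref{HFKT26} yields $\widetilde U\cong T(2,6)$.

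For step (iii), the complement of $T(2,6)$ is Seifert fibered over an annulus with two exceptional fibers (of the same multiplicity, $3$), and by~\cite{edmonds_group_1983} every symmetry is isotopic to a fiber-preserving one; such a symmetry descends to a symmetry of the annulus preserving the two marked points. As with $T(2,4)$, the mapping class group of the annulus with two marked points is generated by the orientation-reversing reflection and the hyperelliptic involution that swaps the two boundary circles, giving symmetry group $\Z/2\Z\oplus\Z/2\Z$; exactly one nontrivial element is orientation-preserving on $S^3$ and interchanges the two components of $T(2,6)$. Since the branched-cover deck transformation must be such a symmetry, and quotienting by it and recording the image of the fixed-point set recovers the annular link, there is a unique candidate, namely $\widehat{\sigma_1\sigma_2\sigma_3\sigma_4\sigma_5}$ (whose annular Khovanov homology does match, by the same computation used to set up the theorem).

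The main obstacle I anticipate is step (i): unlike the $4$-braid case, here I need to be confident that the weight-$6$ irreducible genuinely exhausts homological grading $i=0$ and that no extra $\mathfrak{sl}_2(\C)$-summands hide in higher or lower $i$-gradings — this requires knowing the graded dimensions of $\AKh(\widehat{\sigma_1\sigma_2\sigma_3\sigma_4\sigma_5},\Z/2\Z)$ concretely (presumably from~\cite{AKhMathematica}) and checking that the $i=1$ generators really do all persist to $\C$-coefficients. A secondary subtlety in step (iii) is verifying that the two exceptional fibers of the $T(2,6)$-complement being of equal multiplicity (rather than distinct, as could a priori allow a symmetry swapping them only in one Seifert structure) does not introduce additional symmetries beyond the $\Z/2\Z\oplus\Z/2\Z$ found for $T(2,4)$; I expect the same annulus-with-two-marked-points analysis to go through verbatim, but this is where one should be careful, perhaps invoking Ruberman's input on symmetries of Seifert fibered links.
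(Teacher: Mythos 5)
Your proposal follows essentially the same route as the paper: the paper also splits the argument into showing $L$ is an unknot in $S^3$ (Lemma~\ref{6unknot}), identifying $\widetilde U$ with $T(2,6)$ via the collapsing spectral sequence and Theorem~\ref{HFKT26} (Lemma~\ref{6HFK}), and then using the $\Z/2\Z\oplus\Z/2\Z$ symmetry group of the Seifert fibered complement of $T(2,6)$ to recover the annular link. The only discrepancies are cosmetic bookkeeping (e.g.\ which $k$-gradings the two surviving generators occupy), and the subtleties you flag are handled in the paper exactly as you anticipate.
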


We break the proof of Theorem~\ref{thm:6braid} up by first proving the following two lemmas. The proofs are similar to the proofs of Lemmas~\ref{4unknot} and~\ref{4HFK} respectively.

\begin{lemma}\label{6unknot}
     Let $L \subseteq A \times I \subseteq S^3$ be an annular link. If $\AKh( L , \Z / 2\Z) \cong \AKh( \widehat{\sigma_1 \sigma_2\sigma_3\sigma_4 \sigma_5} , \Z / 2\Z)$ then $L$ is an unknot in $S^3$.
\end{lemma}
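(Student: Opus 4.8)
The proof will follow the template of Lemma~\ref{4unknot} essentially verbatim, with $4$ replaced by $6$. First I would pass to $\C$ coefficients, using $\dim_{\C}\AKh(L,\C)\le\dim_{\Z/2\Z}\AKh(L,\Z/2\Z)$ together with the fact that over $\C$ the annular homology carries an $\mathfrak{sl}_2(\C)$-action commuting with the internal differential, with the $k$-grading recording the weights and the homological grading preserved by the action. Since $L$ is a $6$-braid closure (which the hypothesis forces, exactly as in the $4$-braid case), $\AKh(L,\C)$ is one-dimensional in the extremal grading $k=6$, coming from the oriented resolution and hence sitting in homological grading $0$; the weight-$6$ vector generates an irreducible subrepresentation $W$ of dimension $7$ lying entirely in homological grading $0$. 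Reading off the hypothesised isomorphism with $\AKh(\widehat{\sigma_1\sigma_2\sigma_3\sigma_4\sigma_5},\Z/2\Z)$, homological grading $0$ has total dimension $7$ and the whole group is supported in homological gradings $0$ and $1$. Thus $\AKh(L,\C)$ is exactly $W$ in grading $0$; the integral group has no $2$-torsion (as in Lemma~\ref{4unknot}, any torsion would, by the universal coefficient theorem, inflate the $\Z/2\Z$-dimension in two adjacent homological gradings, which is impossible given the dimension count above); and consequently every generator of $\AKh(L,\Z/2\Z)$ in homological grading $1$ is carried by a free generator, hence survives to $\AKh(L,\C)$.

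Next I would feed this into the spectral sequence from $\AKh(L,\C)$ to $\Kh(L,\C)$. A direct computation of the annular Khovanov homology of the trivial $6$-braid shows that $L$ is not a closure of the trivial braid, so by \cite[Theorem~3.1]{BG} the induced differential $\partial_-$ carries the highest-weight generator in homological grading $0$ to a nonzero multiple of the highest-weight generator in homological grading $1$. Since $\partial_-$ belongs to the $\mathfrak{sl}_2(\wedge)$-action and commutes up to sign with the lowering operator $f$ \cite[Theorem~1]{grigsby_annular_2018}, applying powers of $f$ shows that the image of $\partial_-$ spans all of homological grading $1$ of $\AKh(L,\C)$. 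Hence $\Kh(L,\C)$ is two-dimensional, so $L$ is an unknot in $S^3$.

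The main obstacle is the bookkeeping in the first paragraph: confirming from the explicit annular Khovanov homology of $\widehat{\sigma_1\sigma_2\sigma_3\sigma_4\sigma_5}$ that the weight-$6$ irreducible exhausts homological grading $0$ and that the group is concentrated in homological gradings $0$ and $1$ — these are precisely the inputs that make the no-torsion argument and the $\partial_-$-surjectivity argument go through — after which the rest is formal, just as for Lemma~\ref{4unknot}.
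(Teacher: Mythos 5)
Your proposal is correct and follows essentially the same argument as the paper: pass to $\C$ coefficients, identify homological grading $0$ with the weight-$6$ irreducible of dimension $7$, rule out torsion in grading $1$ via the universal coefficient theorem, and use \cite[Theorem~3.1]{BG} together with the $\mathfrak{sl}_2(\wedge)$-action to show $\partial_-$ surjects onto grading $1$, forcing $\Kh(L,\C)$ to be two-dimensional. Your dimension count ($7$ in homological grading $0$) is in fact the correct one; the paper's text says ``dimension $5$'' there, an apparent carry-over from the $4$-braid case.
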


\begin{proof}
    The proof proceeds similarly to the proof of Lemma~\ref{4unknot}.
    
    Because $L$ is a $6$-braid closure,  in grading $i=0$ $\AKh( L , \mathbb{C})$ must contain a weight $6$ irreducible $\mathfrak{sl}_2(\mathbb{C})$ representation, of dimension $7$. Thus $\AKh( L , \mathbb{C})$ must consist only of this representation in grading $i=0$ because $\AKh( L , \Z / 2\Z)$ has dimension $5$ in homological grading $0$. From this, we have that the generators in grading $i = 1$ for $\AKh( L , \Z / 2\Z)$ must also all correspond to generators of $\AKh( L , \mathbb{C})$ as well.
    
    From~\cite[Theorem~3.1]{BG}, we know that the differential $\partial_-$ on $\AKh( L , \mathbb{C})$ inducing the spectral sequence to $\Kh(L)$ must send the highest weight generator in $i = 0$ to the highest weight generator in $i = 1$. The action of $\partial_-$ is part of the action of $\mathfrak{sl}_2(\wedge)$ on $\AKh( L , \mathbb{C})$ and commutes up to sign with the lowering operator $f$~\cite[Theorem~1]{grigsby_annular_2018}. This means that the image of $\partial_-$ is the span of all generators in grading $i = 1$. Thus $\Kh(L)$ is dimension $2$ and $L$ is the unknot.
    
\end{proof}

	\begin{lemma}\label{6HFK}
     Let $L \subseteq A \times I \subseteq S^3$ be an annular link. If $\AKh( L , \Z / 2\Z) \cong \AKh( \widehat{\sigma_1 \sigma_2\sigma_3\sigma_4 \sigma_5} , \Z / 2\Z)$ then $\widetilde{U}$ in $\Sigma(L)$ is isotopic to $T(2,6)$.
\end{lemma}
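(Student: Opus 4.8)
The plan is to mimic the proof of Lemma~\ref{4HFK} almost verbatim, replacing $T(2,4)$ with $T(2,6)$ and the rank~$8$ computation with a rank~$14$ one. First I would invoke the spectral sequence from $\AKh(L,\Z/2\Z)$ to $\widehat{\HFK}(-\widetilde{U})$ realized as an iterated mapping cone, so that within each fixed $i$-grading the relative Maslov grading of two generators equals half the difference of their quantum gradings. Then, exactly as before, I would observe that the differential $\partial_-$ computing the spectral sequence from $\AKh(L,\Z/2\Z)$ to $\Kh(L,\Z/2\Z)$ lowers Maslov grading by one, which forces all generators in a fixed $k$-grading (equivalently $2A$-grading) to lie in the same relative Maslov grading. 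Since $L$ is a $6$-braid closure, $\AKh(L,\Z/2\Z)$ has dimension one in $k=6$, so the $i=0$ summand is a single weight-$6$ irreducible by the argument in Lemma~\ref{6unknot}; combined with the assumed isomorphism this pins down the $k$-gradings of all $14$ generators. Because the differentials in the spectral sequence to $\widehat{\HFK}(-\widetilde{U})$ preserve the $k$-grading but change the Maslov grading, the spectral sequence collapses, so $\widehat{\HFK}(-\widetilde{U})\otimes V$ agrees with $\AKh(L,\Z/2\Z)$ as a bigraded vector space.

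Next I would upgrade to absolute Maslov gradings. By Lemma~\ref{6unknot}, $L$ is an unknot in $S^3$, so $\Kh(L,\Z/2\Z)$ has dimension $2$ and the two surviving generators of $\AKh(L,\Z/2\Z)$ under $\partial_-$ sit in the $k$-gradings $-6$ and $-4$ (the extremes consistent with an unknot); these must then lie in Maslov gradings $0$ and $1$. Propagating via the relative Maslov grading information computed above determines the Maslov gradings of all $14$ generators, and after dividing out the factor $V$ (supported in bigradings $(0,0)$ and $(-1,-1)$) one reads off that $\widehat{\HFK}(-\widetilde{U})$ has exactly the bigraded ranks of $\widehat{\HFK}(T(2,6))$ recorded in Section~\ref{SectionT(2,6)}: rank one in $(M,A)$ gradings $(0,3)$ and $(-6,-3)$, rank two in $(-1,2),(-2,1),(-3,0),(-4,-1),(-5,-2)$, and zero elsewhere.

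Finally, since $\widehat{\HFK}(-\widetilde{U}) \cong \big(\widehat{\HFK}(\widetilde{U})\big)^*$ with the corresponding sign change in gradings, this gives $\widehat{\HFK}(\widetilde{U}) \cong \widehat{\HFK}(T(2,6))$, and an application of Theorem~\ref{HFKT26} shows that $\widetilde{U}$ is isotopic to $T(2,6)$ in $\Sigma(L)$, completing the proof of Lemma~\ref{6HFK}.

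I expect the main obstacle to be the bookkeeping in the absolute-Maslov-grading step: one must be careful that the $k$-grading pattern forced by the weight-$6$ $\mathfrak{sl}_2(\C)$ representation in $i=0$ together with the five generators in $i=1$ is genuinely consistent with only one bigraded vector space, and that the relative Maslov data transports correctly across the $i=0$ to $i=1$ boundary via $\partial_-$. Everything else is a direct transcription of the $T(2,4)$ case, so no genuinely new ideas are needed beyond verifying that the numerology for $T(2,6)$ works out as expected.
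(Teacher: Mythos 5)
Your overall strategy is exactly the paper's: realize the Roberts--Grigsby--Wehrli spectral sequence as an iterated mapping cone so that relative Maslov gradings within each $i$-column are read off from quantum gradings, use the fact that $\partial_-$ drops the Maslov grading by one to relate the $i=0$ and $i=1$ columns, deduce collapse because the remaining differentials preserve the $k$-grading while changing the Maslov grading, normalize the absolute Maslov grading via the two generators surviving to $\Kh(L,\Z/2\Z)$, and finish with duality and Theorem~\ref{HFKT26}. However, two of your numerical inputs are wrong, and one of them breaks the final step as written. First, $\dim\AKh(\widehat{\sigma_1\sigma_2\sigma_3\sigma_4\sigma_5},\Z/2\Z)=12$, not $14$: the $i=0$ column is the $7$-dimensional weight-$6$ irreducible and the $i=1$ column is $5$-dimensional. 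Second, and more seriously, the target of the spectral sequence here is $\widehat{\HFK}(-\widetilde{U})$ with \emph{no} tensor factor of $V$. Since the braid index is even, $\widetilde{U}$ is a two-component link, the boundary of its complement consists of two tori each mapping homeomorphically to $\partial(A\times I)$, and each receives a single pair of meridional sutures; the extra $\otimes V$ occurs only in the odd-index case (as in Theorem~\ref{rank6}), where the single lifted boundary torus carries two pairs of sutures. With your $\otimes V$ the ranks cannot match: $\rank\bigl(\widehat{\HFK}(T(2,6))\otimes V\bigr)=24$ while $\rank\AKh=12$, so ``dividing out the factor $V$'' would leave a rank-$6$ (or, with your count, an impossible rank-$7$) space, and you could never read off the bigraded ranks of $\widehat{\HFK}(T(2,6))$. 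Dropping the $V$ and correcting the count to $12=\rank\widehat{\HFK}(T(2,6))$ restores the argument.

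One point in your favor: your claim that the two generators surviving to $\Kh(L,\Z/2\Z)$ sit in $k$-gradings $-6$ and $-4$ is what the $\mathfrak{sl}_2(\C)$ weight computation actually gives --- once the image of $\partial_-$ is identified with the weight-$4$ column in $i=1$, the kernel of $\partial_-$ on the weight-$6$ column in $i=0$ is spanned by the two lowest weight vectors. The published proof's ``$k$ gradings $-4$ and $-2$'' appears to be carried over from the $4$-braid case, so do not be alarmed by the discrepancy there.
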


\begin{proof}

 From the construction of the spectral sequence from $\AKh( L , \Z / 2\Z)$ to $\widehat{\HFK}(-\widetilde{U})$ as an iterated mapping cone, it follows that in each $i$ grading on $\AKh( L , \Z / 2\Z)$ the relative Maslov gradings of any two generators agrees with half the difference of the quantum gradings of the generators. It remains to relate the relative Maslov gradings of generators in $i$ grading $0$ and $1$ and upgraded this information to a statement concerning absolute Maslov grading.
    
    The induced differential $\partial_-$ that induces the spectral sequence from $\AKh( L , \Z / 2\Z)$ to $\Kh( L , \Z / 2\Z)$ is part of the total differential on the iterated mapping cone induced by counting pseudo holomorphic polygons. Thus $\partial_-$ must lower the Maslov grading by one. This implies that for $\AKh( L , \Z / 2\Z)$, generators in the same $k$ grading live in the same relative Maslov grading. Since the generators in the same $k$ or $2A$ grading have the same Maslov grading, the spectral sequence to $\widehat{\HFK}(-\widetilde{U})$ has collapsed since all of the differentials in this spectral sequence preserve the $k$ or $2A$ grading and change the Maslov grading.
    
    To upgrade to the above to a statement about the absolute Maslov grading, notice that there are only two generators survive in the spectral sequence to $\Kh( L , \Z / 2\Z)$, namely the generators in $k$ gradings $-4$ and $-2$. These two generators must therefore have Maslov gradings $0$ and $1$ respectively. From here we can pin down the Maslov gradings of the other ten generators of $\AKh( L , \Z / 2\Z)$.
    
    The claim that $\widehat{\HFK}(\widetilde{U}) \cong \widehat{\HFK}(T(2,6)) $ then follows from the fact that $\widehat{\HFK}(-\widetilde{U}) \cong (\widehat{\HFK}(\widetilde{U}))^* $ with the appropriate change in gradings. An application of Theorem~\ref{HFKT26} completes the proof.
    
\end{proof}

With the previous lemmas we will complete the proof of Theorem~\ref{thm:4braid} by understanding the symmetries of $T(2,6)$. 

\begin{proof}[Proof of Theorem~\ref{thm:6braid}]

The complement of the torus link $T(2,6)$ is a Seifert fibered space with two exceptional fibers and the base space is an annulus. Any symmetry of $T(2,6)$ is isotopic to a symmetry which preserves the fibration~\cite{edmonds_group_1983}. The symmetries must also preserve the exceptional fibers so we can consider symmetries of the annulus with two marked points. There is an order two reflection that reverses the orientation of the annulus and an order two hyper elliptic involution that preserves orientation and interchanges the boundary components of the annulus. These two symmetries commute and generate the all symmetries of the annulus with two marked points. This shows that the symmetry group of $T(2,6)$ is $\mathbb{Z}/2\mathbb{Z} \oplus \mathbb{Z}/2\mathbb{Z} $ and that there is only a single symmetry of $T(2,6)$ which interchanges the components of the link and preserving the orientation of $S^3$. This is the symmetry induced by the hyper elliptic involution of the annulus.

If there is an annular link $L$ so that $T(2,6)$ is realized as $\tilde{U}$ in $\Sigma(L)$, then $T(2,6)$ has an order 2 orientation preserving symmetry that interchanges the components of $T(2,6)$ coming from the branched double covering deck transformation. Taking the quotient by this action and considering the image of the fixed point set of the involution in the complement of $T(2,6)$ recovers the annular link $L$. Because $T(2,6)$ only has one such symmetry up to isotopy the only possibility for the annular link $L$ is the braid closure $\widehat{\sigma_1 \sigma_2 \sigma_3\sigma_4 \sigma_5}$.

\end{proof}

\begin{remark}
	Similar techniques could be applied to show that annular Khovanov homology detects the closure of the braid $\sigma_1 \in B_2$. However, the fact that annular Khovanov homology detects this braid closure already follows from the fact that annular Khovanov homology detects braid closures and the braid index~\cite[Corollary~1.2]{grigsby_sutured_2014}~\cite[Corollary~8.4]{xie_instanton_2019}, combined with the computations of the annular Khovanov homologies of all 2-braid closures~\cite[Proposition~15]{grigsby_annular_2018}.
	\end{remark}

	\begin{remark}
	  The reader may wonder why the arguments in Theorem~\ref{thm:4braid} and~\ref{thm:6braid} involve the symmetries of torus links but in Theorem~\ref{detect} these arguments aren't needed. The difference is that the Birman-Hilden correspondence induces an isomorphism between the 3-stranded braid and the mapping class group of the once punctured torus but for higher braid groups it only gives an injective map. If more was known about the Birman-Hilden correspondence then it might be possible to avoid these arguements about symmetries. In the case of 4-braids a question is if $BH(\beta)$ is conjugate to $BH(\sigma_1 \sigma_2\sigma_3)$ in $\Mod(S_1^2)$ then is $\beta$ conjugate to $\sigma_1 \sigma_2\sigma_3$ in $B_4$? If the answer to this question is yes then it would be possible to prove Theorem~\ref{thm:4braid} without considering the symmetries of $T(2,4)$. The same idea would be true for Theorem~\ref{thm:6braid} and a similar question for the Birman-Hilden correspondence for $B_6$.
	\end{remark}

	\end{section}

\bibliographystyle{plain}
\bibliography{name}

\end{document}